\newtheorem*{ozn}{Notation}
\newtheorem{theorem}{Theorem}[section]
\newtheorem{lemma}[theorem]{Lemma}
\theoremstyle{definition}
\newtheorem{definition}[theorem]{Definition}
\theoremstyle{remark}
\numberwithin{equation}{section}
\begin{document}

 
\title[Iterations of Multifunctions]{Iterations of Multifunctions for Graph Theory$\colon$ Bipartite Graphs and Filters} 
 
\author[A. Gi\.zycki]{Artur Gi\.zycki}
\address[Artur Gi\.zycki]{Faculty of Mathematics and Information Science\\
Warsaw University of Technology\\
ul. Koszykowa 75\\
00-662 Warsaw\\
Poland}
\email{a.gizycki@mini.pw.edu.pl}


%
 
 \maketitle
%

\begin{abstract}
We present the theory of multifunctions applied to graphs. Its interesting feature is that walks are recognized as iterations. We consider the graphs with arbitrary number of vertices which are determined by multifunctions. The mutually unique correspondence between graphs and multifunctions is proven. We explain that many facts of graph theory can be formulated in the language of multifunctions and as examples we give$\colon$ neighborhood, walk, independent set, clique, bipartiteness, connectedness, isolated vertices, graph metric, leaf. To simplify the proofs of our theorems, we introduce the concept of iterations of multifunctions. The new equivalent conditions for bipartite multifunctions including the K\"onig theorem and even iterations theorem are given. We prove that there exist filters and ideals in graph theory that are similar to those from the set theory. Finally, to illustrate these facts, we consider the multifunction of prime numbers.\\
\textbf{Keywords:}  multifunctions, bipartite graphs, soft sets.  \\
\textbf{MSC(2010):}  Primary: 26E25; Secondary: 97E60, 05C40.
\end{abstract}
 
\section{\bf Introduction}

Determining whether there exists a Hamilton cycle in a graph is a well known $NP-$complete problem. In the article\cite{hussain} there is a new equivalent condition to Hamiltonian graphs. This is expressed in the language of soft sets\cite{softy,hussain} but soft sets are multifunctions. Our article is about the expression in the language of multifunctions if there exists an odd cycle in a graph; or equivalently, by the K\"onig theorem, bipartiteness of a graph. When we consider multifunctions on the same set it would be beautiful to iterate as in dynamical systems. But every such multifunction represents a graph, so iterations too. Making these iterations on the computer caused us to see that if the graph has an odd cycle then its even iterations are disconnected. This observation led us to begin research on iterations of multifunctions.   

The first section is dedicated to determine the mathematical symbols used in this article, define multifunctions, their images and preimages a little differently than in other articles about multifunctions and to listen of basic properties of multifunctions. In Lemmas 2.3 and 2.4 we prove a large number of these properties. We will use them in this article.  

In this article we prove the relationship between graphs and multifunctions in the same way as in the articles\cite{softy,hussain}. We do not call them soft sets but multifunctions. In fact, multifunctions are more natural to represent graphs because many concepts of graph theory are expressed by means of images, preimages, iterations etc. But the iterations are nowhere used in the theory of soft sets perhaps because it is more natural to consider the dynamical system for multifunction.

We prove that walks in graphs are iterations. We define the independent sets, cliques and bipartite graphs. In Lemmas 3.24, 4.12 and Theorems 5.5, 5.8 we prove the new equivalent conditions of bipartiteness. In section 'Iterations of Multifunctions' we prove the basic iteration calculus. The main method of proof is mathematical induction. The results from this section allow us to apply iterations in combinatorial proofs. We define the concept of connectedness. Using simple observation from the theory of numbers we prove the theorem about even iterations.            

We prove that there are filters and ideals in graph theory. We define a new metric on multifunctions that is taken from graph theory and prove the simple fact about Cauchy filters. Using there concepts we discuss about the multifunction of prime numbers.

\section{Basic definitions and facts}
We will use denotations from different books of logic and set theory i.a. \cite{bourbaki,comfort,ullman}. As the case may be, under the symbol of $N$ we mean both $N\cup\{0\}$ and $N-\{0\}$. We will also consider the alphabets that can have infinitely many letters. We fix the following signs.
\begin{ozn}
Let $X\neq\emptyset,n\in N$ and $R\in P(X\times X)-\{\emptyset\}$ be a binary relation. Then$\colon$

\begin{itemize}
\item$P(X)=\{A\mid A\subset X\}$ is powerset of $X$
\item$\uparrow^{c}\colon P(X)\to P(X);A\mapsto\uparrow^{c}(A)=A^{c}=X-A$ is complement
\item$P_{n}(X)=\{A\in P(X)\mid\sharp A=n\}\subset P(X)$ is powerset of $X$ whose elements are $n-$element sets
\item$P_{fin}(X)=\{A\in P(X)\mid\sharp A<\aleph_{0}\}$ is set of all finite subsets of $X$
\item$X^{\star}$ is set of all finite words over alphabet $X$
\item$\text{concat}\colon X^{\star}\times X^{\star}\to X^{\star};(\alpha,\beta)\mapsto\text{concat}(\alpha,\beta)=\alpha\beta$ is concatenation of words
\item$\text{length}\colon X^{\star}\to N;\alpha\mapsto\text{length}(\alpha)=n$ iff the word $\alpha$ has $n$ letters is length function
\item$\downarrow_{n}\colon X^{\star}\to X\cup\{\epsilon\}$ is the letter of $\alpha$ with number $n$ i.e.\newline$\alpha\mapsto\alpha_{n}=\begin{cases}x$ iff $\text{length}(\alpha)\ge n$ and the $n$-th letter of $\alpha$ is $x\\\epsilon$ iff $\text{length}(\alpha)<n\end{cases}$
\item$X^{\star}_{n}=\{\alpha\in X^{\star}\mid\text{length}(\alpha)=n\}\subset X^{\star}$ is set of words with $n$ letters
\item$\Delta_{X\times X}=\{(x_{1},x_{2})\in X\times X\mid x_{1}=x_{2}\}\subset X\times X$ is diagonal of set $X$
\item$R^{-1}=\{(x_{1},x_{2})\in X\times X\mid(x_{2},x_{1})\in R\}\subset X\times X$ is inverse of $R$
\item$\tilde{R}=\{\{x_{1},x_{2}\}\in P(X)\mid(x_{1},x_{2})\in R\cup R^{-1}\}\subset P(X)$ is disorder of $R$
\item$\alpha(X)=\{\Phi\in P(P(X))-\{\emptyset\}\mid\forall A,B\in P(X)\colon A\in\Phi\land B\in F\Leftrightarrow A\cap B\in\Phi\}$ is set of filters of $X$
\end{itemize}

\end{ozn}
The simplest fact is that $\uparrow^{c}\circ\uparrow^{c}=\text{id}_{P(X)}$. We have the following easy facts about alphabets$\colon\hspace{1pt}X^{\star}_{0}=\{\epsilon\},\hspace{1pt}X^{\star}_{1}\simeq X,\hspace{1pt}\bigcup_{n\in N}X^{\star}_{n}=X^{\star}$ and $(X^{\star}_{n})_{n\in N}$ is pairwise disjoint. 

It is easily seen that $\tilde{R}\subset P_{1}(X)\dot{\cup}P_{2}(X)$. Notice that if the relation $R$ is symmetric, i.e. $R=R^{-1}$, then $\tilde{R}=\{\{x,y\}\in P_{1}(X)\dot{\cup}P_{2}(X)\mid(x,y)\in R\}$.

Recall the basic definitions of multifunction theory. As a multifunction $F$ from $X\neq\emptyset$ to $Y\neq\emptyset$ we understand the function $F\colon X\to P(Y);x\mapsto F(x)\subset Y$, that for simplicity we denote $F\colon X\leadsto Y$. The set of all multifunctions $F\colon X\leadsto Y$ is equal $P(Y)^{X}$. We do not consider multifunctions with $X=\emptyset$ or $Y=\emptyset$, even if we think about trivial multifunction, and therefore we always assume that $X\neq\emptyset$ and $Y\neq\emptyset$.
\begin{definition}\cite{aubin,frych}
Let $F,G\colon X\leadsto Y$ be multifunctions, $S\subset Y$ and $\boxcircle\colon P(Y)\to P(Y);(S_{1},S_{2})\mapsto S_{1}\boxcircle S_{2}$ be a set operation. Then$\colon$ 
\begin{itemize}
\item$F$ is nontrivial iff $\exists x\in X\colon F(x)\neq\emptyset$
\item$F$ is strict iff $\forall x\in X\colon F(x)\neq\emptyset$
\item$F^{c}\colon X\to P(Y);x\mapsto F^{c}(x)=Y-F(x)$ is completion of $F$
\item$D(F)=\{x\in X\mid F(x)\neq\emptyset\}\subset X$ is a domain of $F$
\item$\reflectbox{D}(F)=\bigcup_{x\in X}F(x)$ is a codomain of $F$
\item$F^{-1}\colon Y\to P(X);y\mapsto F^{-1}(y)=\{x\in X\mid y\in F(x)\}$ is an inverse of $F$
\item$\{\cdotp\}\colon X\to P(X);x\mapsto\{x\}$ is a singleton multifunction
\item$\text{const}^{S}\colon X\to P(Y);x\mapsto\text{const}^{S}(x)=S$ is a constant multifunction
\item$F\subset G$ iff $\forall x\in X\colon F(x)\subset G(x)$
\item$F\boxcircle G\colon X\to P(Y);x\mapsto F\boxcircle G(x)=F(x)\boxcircle G(x)$ is a multifunction operation 
\end{itemize}

\end{definition}
It is easily seen that for every multifunctions $F,G,H\colon X\leadsto Y$ the following equalities are fulfilled$\colon\hspace{1pt}(F\cup G)^{-1}=F^{-1}\cup G^{-1},\hspace{1pt}(F\cap G)^{-1}=F^{-1}\cap G^{-1},\hspace{1pt}(F^{-1})^{-1}=F=F^{cc},\hspace{1pt}F\cup G=G\cup F,\hspace{1pt}F\cap G=G\cap F,\hspace{1pt}F\cap(G\cup H)=(F\cap G)\cup(F\cap H),\hspace{1pt}(F^{c})^{-1}=(F^{-1})^{c},\hspace{1pt}(\text{const}^{\emptyset})^{-1}=\text{const}^{\emptyset},\hspace{1pt}(\text{const}^{Y})^{-1}=\text{const}^{Y},\hspace{1pt}(\text{const}^{\emptyset})^{c}=\text{const}^{Y},\hspace{1pt}(\text{const}^{Y})^{c}=\text{const}^{\emptyset},F\subset G\Leftrightarrow G^{c}\subset F^{c}$ and if $F$ is strict then $D(F)$ is the domain of function $F\in P(Y)^{X}$. Almost the same we can prove for the sequence of multifunction $(F_{n})\colon N\to P(Y)^{X};n\mapsto F_{n}\colon X\to P(Y);x\mapsto F_{n}(x)$ e.g. $(\bigcup_{n\in N}F_{n})^{-1}=\bigcup_{n\in N}F^{-1}_{n}$.

For multifunctions as well as for functions we can define images and preimages. For the purpose of this article, in order to not to get lost in the signs, we change the standard signs from $F^{-1},F^{+1},F$ to $F_{-},F_{+},F_{\cup}$.
\begin{definition}\cite{aubin,bergetop,frych}
Let $F\colon X\leadsto Y$ be a multifunction. Then$\colon$
\begin{itemize}
\item$F_{-}\colon P(Y)\to P(X);B\mapsto F_{-}(B)=\{x\in X\mid F(x)\cap B\neq\emptyset\}\subset X$ is a complete preimage of $F$
\item$F_{+}\colon P(Y)\to P(X);B\mapsto F_{+}(B)=\{x\in X\mid F(x)\subset B\}\subset X$ is a small preimage of $F$
\item$F_{\cup}\colon P(X)\to P(Y);A\mapsto F_{\cup}(A)=\{y\in Y\mid\exists a\in A\colon y\in F(a)\}=\bigcup_{a\in A}F(a)\subset Y$ is an $\cup-$image of $F$
\item$F_{\cap}\colon P(X)-\{\emptyset\}\to P(Y);A\mapsto F_{\cap}(A)=\{y\in Y\mid\forall a\in A\colon y\in F(a)\}=\bigcap_{a\in A}F(a)\subset Y$ is an $\cap-$image of $F$

\end{itemize}

\end{definition}
It is obvious that $F_{\cup}(X)=\reflectbox{D}(F),F\subset G\Rightarrow F_{-}\subset G_{-},F\subset G\Rightarrow F_{\cup}\subset G_{\cup}$ and $F\subset G\Rightarrow F_{+}\supset G_{+}$. With the assumption that a multifunction $F$ is strict we have that $F_{+}(B)\subset F_{-}(B)$\cite{frych}. The assumption of strictness is important in this article. To go through the logic we will use the immediate fact $A\cap B=\emptyset$ iff $A\subset B^{c}$.

Recall the interesting properties of images and preimages of multifunctions with immediate proofs. 
\begin{lemma}\cite{aubin,bergetop,frych}
Let $F\colon X\leadsto Y$ be a multifunction and $A\subset X,B\subset Y$. Then$\colon$

\begin{enumerate}[(i)]
\item$F_{+}(Y)=X$
\item$F_{-}(\emptyset)=\emptyset$
\item$F^{-1}_{\cup}=F_{-}=\uparrow^{c}\circ F_{+}\circ\uparrow^{c}$
\item$\text{const}^{\emptyset}_{-}(B)=\emptyset$
\item$\text{const}^{\emptyset}_{+}(B)=X$
\item$\text{const}^{Y}_{-}(B)=\begin{cases}X$ iff $B\neq\emptyset\\\emptyset$ iff $B=\emptyset\end{cases}$
\item$\text{const}^{Y}_{+}(B)=\begin{cases}X$ iff $B=Y\\\emptyset$ iff $B\neq Y\end{cases}$
\item$F$ is trivial iff $F_{-}(Y)=\emptyset$
\item$F$ is strict iff $F_{+}(\emptyset)=\emptyset$
\item$F$ is trivial iff $F_{+}(\emptyset)=X$
\item$F$ is strict iff $F_{-}(Y)=X$
\item$F$ is not strict iff $\exists C\in P(X)-\{\emptyset\}\colon F|_{C}=\text{const}^{\emptyset}$
\item$F_{+}(B)=X$ iff $B\supset \reflectbox{D}(F)$
\item$\exists(x,y)\in A\times B\colon y\in F(x)$ iff $A\cap F_{-}(B)\neq\emptyset$
\item$F_{\cup}(A)\neq\emptyset$ iff $A\cap F_{-}(Y)\neq\emptyset$
\item$F_{\cup}(\emptyset)=\emptyset$
\item$A=\emptyset$ iff $F_{\cup}(A)=\emptyset$ assuming that $F$ is nontrivial
\item$F_{-}(B)=\emptyset$ iff $\forall(x,y)\in X\times B\colon y\notin F(x)$
\item$F_{+}(B)=\emptyset$ iff $\forall x\in X\colon F(x)\cap B^{c}\neq\emptyset$
\item$F_{-}(B^{c})=F_{+}(B)^{c}$
\item$F_{+}(B^{c})=F_{-}(B)^{c}$
\item$\{\cdot\}_{\cup}(A)=\{\cdot\}_{-}(A)=\{\cdot\}_{+}(A)=A$
\item$\{\cdot\}_{\cap}(A)=\begin{cases}A$ iff $\sharp(A)=1\\\emptyset$ iff $\sharp A>1\end{cases}$
\item$F_{\cup}\circ\{\cdot\}=F=F_{\cap}\circ\{\cdot\}$
\item$F_{-}\circ\{\cdot\}=F^{-1}$
\item$F_{\cup}=\uparrow^{c}\circ F^{c}_{\cap}$
\item$F_{\cap}=\uparrow^{c}\circ F^{c}_{\cup}$
\item$F^{-1}_{\cap}=\uparrow^{c}\circ F^{c}_{-}$
\end{enumerate}

\end{lemma}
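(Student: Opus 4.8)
The plan is to treat all twenty-eight items as direct unfoldings of the two preceding definitions, organised around three recurring tools: first, the set-theoretic equivalence $S\cap B\neq\emptyset\Leftrightarrow S\not\subset B^{c}$ recorded just before the lemma, which is what ties $F_{-}$ to $F_{+}$; second, De Morgan's laws applied to the union and intersection that define $F_{\cup}$ and $F_{\cap}$, together with the pointwise completion $F^{c}(x)=Y-F(x)$; and third, routine shuffling of the quantifiers hidden inside $\exists$ and $\bigcup$ as against $\forall$ and $\bigcap$. I would prove the pivotal identity (iii) first, since several later items collapse to it. Reading off definitions, $F^{-1}_{\cup}(B)=\bigcup_{b\in B}F^{-1}(b)=\{x\mid\exists b\in B\colon b\in F(x)\}=\{x\mid F(x)\cap B\neq\emptyset\}=F_{-}(B)$, while $(\uparrow^{c}\circ F_{+}\circ\uparrow^{c})(B)=F_{+}(B^{c})^{c}=\{x\mid F(x)\subset B^{c}\}^{c}=\{x\mid F(x)\cap B\neq\emptyset\}=F_{-}(B)$, using exactly the equivalence above. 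Items (xx) and (xxi) are then immediate: composing (iii) with $\uparrow^{c}$ on both sides and using $\uparrow^{c}\circ\uparrow^{c}=\mathrm{id}$ gives $F_{+}=\uparrow^{c}\circ F_{-}\circ\uparrow^{c}$, and substituting $B^{c}$ for $B$ yields both.

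The group (i), (ii), (iv)--(vii), (xiii), (xvi), (xxii), (xxiii) are one-line evaluations of a single definition, e.g. $F_{+}(Y)=\{x\mid F(x)\subset Y\}=X$; $F_{\cup}(\emptyset)=\bigcup_{a\in\emptyset}F(a)=\emptyset$; $\{\cdot\}_{\cup}(A)=\bigcup_{a\in A}\{a\}=A$ and likewise the $-,+$ versions; and for (xxiii) an intersection of two distinct singletons is empty. For the trivial/strict characterisations (viii)--(xii) the key computation is $F_{-}(Y)=\{x\mid F(x)\neq\emptyset\}=D(F)$ together with $F_{+}(\emptyset)=\{x\mid F(x)=\emptyset\}$; comparing these with the definitions of trivial ($D(F)=\emptyset$) and strict ($D(F)=X$) disposes of all five at once, and (xii) just names the witnessing set $C=\{x\mid F(x)=\emptyset\}$.

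The completion/De Morgan family (xxvi)--(xxviii) follows from $F^{c}(x)=Y-F(x)$: for instance $(\uparrow^{c}\circ F^{c}_{\cap})(A)=Y-\bigcap_{a\in A}(Y-F(a))=\bigcup_{a\in A}F(a)=F_{\cup}(A)$, and dually for (xxvii); for (xxviii) one first identifies $F^{-1}_{\cap}(B)=\bigcap_{b\in B}F^{-1}(b)=\{x\mid B\subset F(x)\}$ and $F^{c}_{-}(B)=\{x\mid B\not\subset F(x)\}$, whose complement is the former. The quantifier-rewriting items (xiv), (xv), (xviii), (xix), (xxiv), (xxv) are obtained by pushing the existential or universal quantifier through, e.g. $(F_{-}\circ\{\cdot\})(y)=F_{-}(\{y\})=\{x\mid y\in F(x)\}=F^{-1}(y)$ for (xxv).

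I expect no genuine obstacle in any single item; the only places demanding care are the bookkeeping of the ambient set in which each complement is taken, the standing restriction $A\neq\emptyset$ attached to $F_{\cap}$ (so that (xxvi)--(xxviii) are stated on its proper domain), and item (xvii). Its forward direction is just (xvi), but the backward direction $F_{\cup}(A)=\emptyset\Rightarrow A=\emptyset$ really requires that every element of $A$ have nonempty image, so I would make sure the hypothesis invoked there is strong enough to pick some $a\in A$ with $F(a)\neq\emptyset$; this is the one spot where the intended assumption on $F$ must be used in earnest.
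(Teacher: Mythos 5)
Your proposal is correct and follows essentially the same route as the paper: every item is a direct unfolding of the definitions, with (iii) proved first and reused for (x), (xi), (xx), (xxi); your grouping of (viii)--(xii) around $F_{-}(Y)=D(F)$ and $F_{+}(\emptyset)=\{x\in X\mid F(x)=\emptyset\}$, and your direct De Morgan computation for (xxviii), are only organisational variants of what the paper does item by item. One substantive remark: your caution about item (xvii) is well placed, and in fact sharper than the paper's own treatment. The paper's proof asserts that nontriviality of $F$ gives ``$A\cap F_{-}(Y)=\emptyset$ iff $A=\emptyset$'', but nontriviality only yields $F_{-}(Y)\neq\emptyset$, whereas that equivalence for arbitrary $A$ requires $F_{-}(Y)=X$, i.e.\ strictness: take $F(x_{0})=\emptyset$ for some $x_{0}$ while $F$ is nonempty elsewhere, and $A=\{x_{0}\}$, to get $F_{\cup}(A)=\emptyset$ with $A\neq\emptyset$. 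So the hypothesis you would ``make sure is strong enough'' is indeed strictness rather than the nontriviality stated in the lemma; with that correction your argument for (xvii) goes through and the rest of the proposal needs no change.
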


\begin{proof}
Fix $x\in X,y\in Y,A\subset X,B\subset Y$.
\begin{enumerate}[(i)]
\item$F_{+}(Y)=\{x\in X\mid F(x)\subset Y\}=X$.
\item$F_{-}(\emptyset)=\{x\in X\mid F(x)\cap\emptyset=\emptyset\neq\emptyset\}=\emptyset$.
\item$F_{-}(B)=\{x\in X\mid B\cap F(x)\neq\emptyset\}=\{x\in X\mid\exists y\in Y\colon y\in B\cap F(x)\}=\{x\in X\mid\exists y\in B\colon y\in F(x)\}=\{x\in X\mid\exists y\in B\colon x\in F^{-1}(y)\}=\bigcup_{y\in B}F^{-1}(y)=F^{-1}_{\cup}(B)$ and $(\uparrow^{c}\circ F_{+}\circ\uparrow^{c})(B)=(F_{+}(B^{c}))^{c}=\{x\in X\mid F(x)\subset B^{c}\}^{c}=\{x\in X\mid F(x)\not\subset B^{c}\}=\{x\in X\mid F(x)\cap B\neq\emptyset\}=F_{-}(B)$.
\item$\text{const}^{\emptyset}_{-}(B)=\{x\in X\mid\text{const}^{\emptyset}(x)\cap B\neq\emptyset\}=\{x\in X\mid\emptyset\cap B=\emptyset\neq\emptyset\}=\emptyset$.
\item$\text{const}^{\emptyset}_{+}(B)=\{x\in X\mid\text{const}^{\emptyset}(x)\subset B\}=\{x\in X\mid\emptyset\subset B\}=X$.
\item$\text{const}^{Y}_{-}(B)=\{x\in X\mid\text{const}^{Y}(x)\cap B\neq\emptyset\}=\{x\in X\mid Y\cap B\neq\emptyset\}=$\newline$=\{x\in X\mid B\neq\emptyset\}=\begin{cases}X$ iff $B\neq\emptyset\\\emptyset$ iff $B=\emptyset\end{cases}$
\item$\text{const}^{Y}_{+}(B)=\{x\in X\mid\text{const}^{Y}(x)\subset B\}=\{x\in X\mid B=Y\}=\begin{cases}X$ iff $B=Y\\\emptyset$ iff $B\neq Y\end{cases}$
\item$F$ is nontrivial iff $\exists x\in X\colon F(x)\cap Y=F(x)\neq\emptyset$. We may say equivalently that $\exists x\in X\colon x\in F_{-}(Y)$ i.e. $F_{-}(Y)\neq\emptyset$.
\item Notice that $F_{+}(\emptyset)=\{x\in X\mid F(x)=\emptyset\}$. But $F$ is nonstrict iff $\exists x\in X\colon F(x)=\emptyset$; or equivalently, $\exists x\in X\colon x\in F_{+}(\emptyset)$ i.e. $F_{+}(\emptyset)\neq\emptyset$.
\item$F$ is trivial iff $\emptyset=F_{-}(Y)=(\uparrow^{c}\circ F_{+}\circ\uparrow^{c})(Y)=F_{+}(Y^{c})^{c}=F_{+}(\emptyset)^{c}$; or equivalently, $X=\emptyset^{c}=F_{+}(\emptyset)^{cc}=F_{+}(\emptyset)$.
\item$F$ is strict iff $\emptyset=F_{+}(\emptyset)=(\uparrow^{c}\circ F_{-}\circ\uparrow^{c})(\emptyset)=F_{-}(\emptyset^{c})^{c}=F_{-}(Y)^{c}$; or equivalently, $X=\emptyset^{c}=F_{-}(Y)^{cc}=F_{-}(Y)$.
\item$F$ is not strict iff $F_{+}(\emptyset)\neq\emptyset$. We may say equivalently that $\exists C\in P(X)-\{\emptyset\}\colon C\subset F_{+}(\emptyset)$; or equivalently, $\exists C\in P(X)-\{\emptyset\}\colon\forall x\in C\colon F(x)=\emptyset$ i.e. $\exists C\in P(X)-\{\emptyset\}\colon F|_{C}=\text{const}^{\emptyset}$.
\item$F_{+}(B)=X$ iff $\forall x\in X\colon F(x)\subset B$; or equivalently, $B\supset\bigcup_{x\in X}F(x)=F_{\cup}(X)=\reflectbox{D}(F)$.
\item$A\cap F_{-}(B)\neq\emptyset$ iff there exists $x\in X$ such that $x\in A$ and $F(x)\cap B\neq\emptyset$. We may say equivalently that there exist $y\in Y$ and $x\in X$ such that $x\in A$ and $y\in B$ and $y\in F(x)$ i.e. $\exists(x,y)\in A\times B\colon y\in F(x)$.
\item$F_{\cup}(A)\neq\emptyset$ iff there exists $y\in Y$ such that $y\in F_{\cup}(A)=\bigcup_{x\in A}F(x)$. We may say equivalently that there exists $(x,y)\in A\times Y$ such that $y\in F(x)$ i.e. $A\cap F_{-}(Y)\neq\emptyset$
\item$F_{\cup}(\emptyset)=\bigcup_{a\in\emptyset}F(a)=\emptyset$.
\item$F_{\cup}(A)=\emptyset$ iff $A\cap F_{-}(Y)=\emptyset$. If we assume that $F$ is nontrivial then $A\cap F_{-}(Y)=\emptyset$ iff $A=\emptyset$.
\item Notice that $F_{-}(B)=X\cap F_{-}(B)\neq\emptyset$ iff $\exists(x,y)\in X\times B\colon y\in F(x)$. We may say equivalently that $F_{-}(B)=\emptyset$ iff $\forall(x,y)\in X\times B\colon y\notin F(x)$.
\item$F_{+}(B)=\emptyset$ iff $\forall x\in X\colon x\notin F_{+}(B)$; or equivalently, $\forall x\in X\colon F(x)\not\subset B$ i.e. $\forall x\in X\colon F(x)\cap B^{c}\neq\emptyset$.
\item$F_{-}(Y-B)=F_{-}(B^{c})=(F_{-}\circ\uparrow^{c})(B)=(\uparrow^{c}\circ F_{+})(B)=F_{+}(B)^{c}=X-F_{+}(B)$.
\item$F_{+}(Y-B)=X-(X-F_{+}(Y-B))=X-F_{-}(Y-(Y-B))=X-F_{-}(B)$.
\item$\{\cdot\}_{-}(A)=\{x\in X\mid\{x\}\cap A\neq\emptyset\}=\{x\in X\mid x\in A\}=A=\{x\in X\mid\{x\}\subset A\}=\{\cdot\}_{+}(A)$ and $A=\bigcup_{x\in A}\{x\}=\{\cdot\}_{\cup}(A)$ because $\{x\}\subset A$ iff $x\in A$ and $x\in A$ iff $\{x\}\cap A\neq\emptyset$.
\item$\{\cdot\}(\{x\})=\bigcap_{y\in\{x\}}\{y\}=\{x\}$ and $\{\cdot\}(A)=\bigcap_{x\in A}\{x\}=\emptyset$ iff $\sharp A>1$.
\item$(F_{\cup}\circ\{\cdot\})(x)=F_{\cup}(\{x\})=\bigcup_{y\in\{x\}}F(y)=F(x)=\bigcap_{y\in\{x\}}F(y)=F_{\cap}(\{x\})=(F_{\cap}\circ\{\cdot\})(x)$.
\item$(F_{-}\circ\{\cdot\})(y)=F_{-}(\{y\})=\{x\in X\mid\{y\}\cap F(x)\neq\emptyset\}=\{x\in X\mid y\in F(x)\}=\{x\in X\mid x\in F^{-1}(y)\}=F^{-1}(y)$.
\item$F_{\cup}(A)=\bigcup_{a\in A}F(a)=\bigcup_{a\in A}F^{cc}(a)=\bigcup_{a\in A}(F^{c}(a))^{c}=\bigcup_{a\in A}(Y-F^{c}(a))=Y-\bigcap_{a\in A}(F^{c}(a))=Y-F^{c}_{\cap}(A)=F^{c}_{\cap}(A)^{c}=(\uparrow^{c}\circ F^{c}_{\cap})(A)$.
\item$F_{\cap}=F^{cc}_{\cap}=((F^{c})^{c})_{\cap}=\uparrow^{c}\circ\uparrow^{c}\circ((F^{c})^{c})_{\cap}=\uparrow^{c}\circ(\uparrow^{c}\circ((F^{c})^{c})_{\cap})=\uparrow^{c}\circ F^{c}_{\cup}$.
\item$F^{-1}_{\cap}=\uparrow^{c}\circ(F^{-1})^{c}_{\cup}=\uparrow^{c}\circ(F^{c})^{-1}_{\cup}=\uparrow^{c}\circ F^{c}_{-}$.
\end{enumerate}
\end{proof}

The strictness of one multifunction goes to the strictness of the union of multifunctions.
\begin{lemma}
Let $(F_{t})_{t\in T}\colon T\to P(Y)^{X}$ be a family of multifunctions and $B\subset Y$. Then$\colon$
\begin{enumerate}[(i)]
\item$\bigcap_{t\in T}(F_{t})_{+}(B)=(\bigcup_{t\in T}F_{t})_{+}(B)$.
\item$\exists t\in T\colon F_{t}$ is strict iff $\bigcup_{t\in T}F_{t}$ is strict.
\item$\forall t\in T\colon F_{t}$ is not strict iff $\bigcup_{t\in T}F_{t}$ is not strict.
\end{enumerate}
\end{lemma}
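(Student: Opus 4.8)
The plan is to prove (i) first as a single chain of equivalences and then to obtain (ii) and (iii) from it by specializing to $B=\emptyset$ and invoking the strictness criterion that $F$ is strict iff $F_{+}(\emptyset)=\emptyset$ (Lemma 2.3(ix)). For (i) I would fix $x\in X$ and unwind the memberships: $x\in\bigcap_{t\in T}(F_{t})_{+}(B)$ says that $x\in(F_{t})_{+}(B)$ for every $t$, that is, $F_{t}(x)\subset B$ for every $t\in T$. The only genuine ingredient is the elementary fact that a union of sets is contained in $B$ exactly when each of its members is, so ``$\forall t\in T\colon F_{t}(x)\subset B$'' is equivalent to $\bigcup_{t\in T}F_{t}(x)\subset B$. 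Since $\bigl(\bigcup_{t\in T}F_{t}\bigr)(x)=\bigcup_{t\in T}F_{t}(x)$ by definition of the union multifunction, this last condition is exactly $x\in\bigl(\bigcup_{t\in T}F_{t}\bigr)_{+}(B)$, and the chain closes, proving (i).

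For the forward implication of (ii) I would specialize (i) to $B=\emptyset$, giving $\bigl(\bigcup_{t\in T}F_{t}\bigr)_{+}(\emptyset)=\bigcap_{t\in T}(F_{t})_{+}(\emptyset)$. If some $F_{t_{0}}$ is strict then $(F_{t_{0}})_{+}(\emptyset)=\emptyset$ by Lemma 2.3(ix), and since the intersection on the right is contained in this single factor it too is empty, so $\bigcup_{t\in T}F_{t}$ is strict by the same criterion; alternatively one may use $F\subset G\Rightarrow F_{+}\supset G_{+}$ from the remarks after Definition 2.2, applied to $F_{t_{0}}\subset\bigcup_{t\in T}F_{t}$. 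Part (iii) then needs no separate work: a biconditional $P\Leftrightarrow Q$ is equivalent to $\neg P\Leftrightarrow\neg Q$, and here $\neg(\exists t\colon F_{t}\text{ strict})$ is ``$\forall t\colon F_{t}$ not strict'' while $\neg(\bigcup_{t}F_{t}\text{ strict})$ is ``$\bigcup_{t}F_{t}$ not strict'', so (iii) is precisely the negation of (ii).

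The step I expect to be the real obstacle is the reverse implication of (ii) (equivalently, the forward direction of (iii)). Feeding $B=\emptyset$ into (i) once more, Lemma 2.3(ix) shows that strictness of $\bigcup_{t\in T}F_{t}$ amounts to the common set $\bigcap_{t\in T}(F_{t})_{+}(\emptyset)$ being empty, that is, to $\forall x\in X\,\exists t\in T\colon F_{t}(x)\neq\emptyset$. The right-hand side of (ii), by contrast, reads $\exists t\in T\,\forall x\in X\colon F_{t}(x)\neq\emptyset$, so the argument would have to promote a ``$\forall x\,\exists t$'' statement to an ``$\exists t\,\forall x$'' one. This quantifier exchange is the crux, and it is not valid without extra hypotheses: for instance, on a two-point set $X$, two multifunctions each empty at exactly one point but at \emph{different} points have strict union while neither member is strict. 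I would therefore record (ii) and (iii) only in their forward directions, which are what the opening sentence asserts and what is used later, unless a uniformity hypothesis on the family $(F_{t})_{t\in T}$ is added to legitimize the exchange.
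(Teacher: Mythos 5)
Your treatment of (i) is exactly the paper's: both arguments unwind $x\in\bigcap_{t\in T}(F_{t})_{+}(B)$ to $\forall t\in T\colon F_{t}(x)\subset B$ and use the elementary fact that $\bigcup_{t\in T}F_{t}(x)\subset B$ holds iff each $F_{t}(x)\subset B$. Your forward direction of (ii) also matches the paper (specialize (i) to $B=\emptyset$ and invoke Lemma 2.3(ix)), and your remark that (iii) is the negation of (ii) is precisely the paper's one-line argument for (iii). The substantive point is the reverse implication of (ii), and there you are right and the paper is not: the paper's proof passes from $\bigcap_{t\in T}(F_{t})_{+}(\emptyset)=\emptyset$ to ``$\exists t\in T\colon(F_{t})_{+}(\emptyset)=\emptyset$'' as if these were equivalent, but an intersection of nonempty sets can be empty, which is exactly the illegitimate exchange of $\forall x\,\exists t$ for $\exists t\,\forall x$ that you flag. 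Your counterexample is valid: with $X=\{a,b\}$, $Y=\{y\}$, $F_{1}(a)=\emptyset$, $F_{1}(b)=\{y\}$, $F_{2}(a)=\{y\}$, $F_{2}(b)=\emptyset$, the union $F_{1}\cup F_{2}$ is strict while neither $F_{1}$ nor $F_{2}$ is, so the biconditionals in (ii) and (iii) fail as stated. Only the forward implications survive, which is all that the sentence introducing the lemma (``the strictness of one multifunction goes to the strictness of the union'') actually asserts and all that is needed later. Your proposal is therefore not merely correct in approach but amounts to a correction of the paper's own proof; recording (ii) and (iii) only in their forward directions, as you suggest, is the right fix.
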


\begin{proof}
Fix $x\in X$.
\begin{enumerate}[(i)]
\item$x\in\bigcap_{t\in T}(F_{t})_{+}(B)$ iff $\forall t\in T\colon x\in(F_{t})_{+}(B)$; or equivalently, $\forall t\in T\colon F_{t}(x)\subset B$. We may say equivalently that $\bigcup_{t\in T}F_{t}(x)\subset B$ i.e. $x\in(\bigcup_{t\in T}F_{t})_{+}(B)$.
\item$\bigcup_{t\in T}F_{t}$ is strict iff $\emptyset=(\bigcup_{t\in T}F_{t})_{+}(\emptyset)=\bigcap_{t\in T}(F_{t})_{+}(\emptyset)$; or equivalently, $\exists t\in T\colon(F_{t})_{+}(\emptyset)=\emptyset$ i.e. $\exists t\in T\colon F_{t}$ is strict.
\item$\bigcup_{t\in T}F_{t}$ is not strict iff not $\exists t\in T\colon F_{t}$ is strict i.e. $\forall t\in T\colon F_{t}$ is not strict.
\end{enumerate}
\end{proof}

\section{Graphs as multifunctions}

We want to describe a whole graph theory (i.a. determine whether a given simple graph is bipartite) in terms of multifunctions. We believe that it is possible. At the beginning we will have to make some changes to what we were tought about graphs.
\subsection{Types of graphs}
The concept of graphs is understood in the following ways.   
\begin{definition}\cite{bollobas,jung}
Let $E,\hspace{1pt}V$ be sets and $V\neq\emptyset$. Then$\colon$
\begin{itemize}
\item$G=(V,E)$ is a simple graph iff $E\subset P_{2}(V)$
\item$G=(V,E)$ is an undirected graph iff $E\subset P_{1}(V)\dot{\cup}P_{2}(V)$ 
\item$G=(V,E)$ is an everywhereloop undirected graph iff $P_{1}(V)\subset E\subset P_{1}(V)\dot{\cup}P_{2}(V)$ 
\item$G=(V,E)$ is a digraph iff $E\subset V\times V$
\item$G=(V,E)$ is a simple digraph iff $E\subset V\times V$ and $\Delta_{V\times V}\cap E=\emptyset$ 
\item$G=(V,E)$ is an everywhereloop digraph iff $\Delta_{V\times V}\subset E\subset V\times V$
\item$G=(V,E)$ is an orgraph iff $G=(V,E)$ is a simple digraph and $E\cap E^{-1}=\emptyset$
\end{itemize}
\end{definition}
We define the basic types of multifunctions which will be used to join the graph theory and the multifunction theory.
\begin{definition}\cite{aubin,kechris}
Let $V\neq\emptyset$ and $F\colon V\leadsto V$ be a multifunction. Then$\colon$
\begin{itemize}
\item$F$ is undirected iff $F=F^{-1}$
\item$F$ is oriented iff $F\cap F^{-1}=\text{const}^{\emptyset}$
\item$F$ is total iff $F\cup F^{-1}=\text{const}^{V}$
\item$F$ is everywhereloop iff $\{\cdotp\}\subset F$
\item$F$ is loopless iff $\{\cdotp\}\cap F=\text{const}^{\emptyset}$
\item$F$ is a simple graph multifunction iff $F$ is loopless and undirected
\item$F$ is an orgraph multifunction iff $F$ is loopless and oriented
\item$F$ is transitive iff $\forall u,v,w\in V\colon u\in F(v)\land v\in F(w)\Rightarrow u\in F(w)$
\item$E_{F}=\{(u,v)\in V\times V\mid u\in F(v)\}\subset V\times V$ is a picture of the multifunction $F$ or graph of multifunction
\item$\partial_{(F,V)}\colon P(V)\to P(V);W\mapsto\partial_{(F,V)}(W)=W\cap F_{-}(W^{c})$ is a boundary with respect to $F$ 
\item$F$ is amenable iff $\forall\epsilon>0\colon\exists W\in P_{fin}(V)-\{\emptyset\}\colon\frac{\sharp{\partial_{(F,V)}(W)}}{\sharp{W}}<\epsilon$
\end{itemize}

\end{definition}
It is easy to prove that for every set $V\neq\emptyset$ and for every multifunctions $F,G\colon V\leadsto V$ the following facts are fulfilled$\colon\hspace{1pt}F\cap\text{const}^{V}=F$, if $F$ is oriented then $F\cap G$ is oriented, if $F$ is loopless then $F\cap G$ is loopless, if $F$ is orgraph multifunction then $F\cap G$ is orgraph multifunction, $E_{F}^{-1}=E_{F^{-1}}$, $F$ is undirected iff $E_{F}$ is symmetric, $F$ is loopless iff $E_{F}$ is irreflexive, $F$ is everywhereloop iff $E_{F}$ is reflexive, $F$ is oriented iff $E_{F}$ is asymmetric, $F$ is transitive iff $E_{F}$ is transitive, $F-\{\cdotp\}$ is loopless, $F\cup\{\cdotp\}$ is everywhereloop, if $F$ is total then $F$ is nontrivial.

There is a mutually unique correspondence between graphs and multifunctions. The essence of this link is the neighborhood. 
\subsection{Correspondence, neighborhood and selection}
It is known that all graphs are either directed or undirected. For both we consider the neighborhood.
\begin{definition}\cite{diestel,jung}
Let $G=(V,E)$ be an undirected graph. Then $N_{G}\colon V\to P(V);v\mapsto N_{G}(v)=\{u\in V\mid\{u,v\}\in E\}$ is the neighborhood multifunction.
\end{definition}

\begin{definition}\cite{diestel,jung}
Let $G=(V,E)$ be a digraph. Then $N_{G}^{-}\colon V\to P(V);v\mapsto N_{G}^{+}(v)=\{u\in V\mid(u,v)\in E\}$ is the inneighborhood multifunction and $N_{G}^{+}\colon V\to P(V);v\mapsto N_{G}^{-}(v)=\{u\in V\mid(v,u)\in E\}$ is the outneighborhood multifunction.
\end{definition}

For undirected graphs we have the following lemmas.
\begin{lemma}\cite{softy,hussain}
Let $G=(V,E)$ be an undirected graph. Then$\colon$
\begin{enumerate}[(i)]
\item$N_{G}$ is undirected multifunction 
\item if $G$ is simple graph then $N_{G}$ is simple graph multifunction
\item if $G$ is everywhereloop then $N_{G}$ is everywhereloop
\end{enumerate}
\end{lemma}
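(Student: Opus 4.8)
The plan is to verify the three claims by direct computation from the definitions, the whole proof resting on the symmetry of the unordered pair $\{u,v\}=\{v,u\}$ together with the identification $\{v,v\}=\{v\}$ that translates "loops" in the multifunction language into singleton edges in the graph.

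For (i) I would fix $v\in V$ and unwind the definition of the inverse multifunction. By definition $N_{G}^{-1}(v)=\{u\in V\mid v\in N_{G}(u)\}$, and the condition $v\in N_{G}(u)$ means precisely $\{v,u\}\in E$. Since $\{v,u\}=\{u,v\}$, this set equals $\{u\in V\mid\{u,v\}\in E\}=N_{G}(v)$. As $v$ was arbitrary, $N_{G}=N_{G}^{-1}$, which is exactly the statement that $N_{G}$ is undirected.

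For (ii) the undirected half is inherited from (i), because every simple graph is undirected (one has $P_{2}(V)\subset P_{1}(V)\dot{\cup}P_{2}(V)$), so it remains only to establish looplessness, i.e.\ $\{\cdotp\}\cap N_{G}=\text{const}^{\emptyset}$, equivalently $v\notin N_{G}(v)$ for every $v$. Here the key observation is that $v\in N_{G}(v)$ holds iff $\{v,v\}=\{v\}\in E$; but a simple graph satisfies $E\subset P_{2}(V)$, whose members are all $2$-element sets, so no singleton can belong to $E$, forcing $v\notin N_{G}(v)$. Combining looplessness with undirectedness gives that $N_{G}$ is a simple graph multifunction.

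For (iii) I would again use the loop--singleton correspondence in the other direction. The everywhereloop hypothesis on $G$ reads $P_{1}(V)\subset E$, so $\{v\}\in E$ for each $v$; since $v\in N_{G}(v)$ iff $\{v,v\}=\{v\}\in E$, this yields $v\in N_{G}(v)$ for all $v$, i.e.\ $\{\cdotp\}\subset N_{G}$, which is the definition of $N_{G}$ being everywhereloop. I do not expect any genuine obstacle: the only point demanding care is the correct unwinding of the inverse $N_{G}^{-1}$ and the bookkeeping that makes $\{v,v\}=\{v\}$ the bridge between looplessness/everywhereloop conditions and the $1$-element versus $2$-element structure of the edge set $E$.
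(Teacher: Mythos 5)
Your proposal is correct and follows essentially the same route as the paper: unwinding $N_{G}^{-1}$ via the symmetry $\{u,v\}=\{v,u\}$ for (i), and using the identification $\{v,v\}=\{v\}$ against $E\subset P_{2}(V)$ (resp. $P_{1}(V)\subset E$) for (ii) and (iii). The only cosmetic difference is that the paper argues (ii) by contradiction while you argue it directly; the content is identical.
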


\begin{proof}
Fix $u,v\in V$.
\begin{enumerate}[(i)]
\item$u\in N_{G}^{-1}(v)$ iff $\{v,u\}\in E$; or equivalently, $\{u,v\}\in E$ and so $v\in N_{G}(u)$ i.e. $N_{G}$ is undirected.
\item Assume that $E\subset P_{2}(V)$ and $N_{G}$ is not loopless. Then there exists $v\in V$ such that $v\in N_{G}(v)$; or equivalently, there exists $v\in V$ such that $\{v,v\}\in E\subset P_{2}(V)$. But $\{v,v\}=\{v\}\notin P_{2}(V)$, contradiction. $N_{G}$ is simple graph multifunction because $N_{G}$ is undirected.
\item Assume that $P_{1}(V)\subset E\subset P_{1}(V)\dot{\cup}P_{2}(V)$. Then $\{v\}=\{v,v\}\in E$; or equivalently, $v\in N_{G}(v)$ and so $\{v\}\subset N_{G}(v)$ i.e. $\{\cdotp\}\subset N_{G}$.
\end{enumerate}

\end{proof}

\begin{lemma}\cite{softy,hussain}
Let $V\neq\emptyset$ and $F\colon V\leadsto V$ be a multifunction. Then$\colon$
\begin{enumerate}[(i)]
\item if $F$ is undirected then $\tilde{G_{F}}=(V,\tilde{E_{F}})$ is undirected graph with set of edges $\tilde{E_{F}}=\{\{u,v\}\in P_{1}(V)\dot{\cup}P_{2}(V)\mid u\in F(v)\}$
\item if $F$ is simple graph multifunction then $\tilde{G_{F}}=(V,\tilde{E_{F}})$ is simple graph with set of edges $\tilde{E_{F}}=\{\{u,v\}\in P_{2}(V)\mid u\in F(v)\}$
\item if $F$ is everywhereloop and undirected then $\tilde{G_{F}}=(V,\tilde{E_{F}})$ is everywhereloop undirected graph with the set of edges\newline$\tilde{E_{F}}=\{\{u,v\}\in P_{1}(V)\dot{\cup}P_{2}(V)\mid u\in F(v)\}$
\end{enumerate}

\end{lemma}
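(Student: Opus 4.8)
The plan is to reduce each of the three claims to facts already established in the excerpt: the behaviour of the \emph{disorder} operation $\tilde{(\cdot)}$ on symmetric relations, and the dictionary between the algebraic type of $F$ and the relational type of its picture $E_{F}$. Concretely I will use that $\tilde{E_{F}}\subset P_{1}(V)\dot{\cup}P_{2}(V)$ holds unconditionally, that for a symmetric relation $R$ one has $\tilde{R}=\{\{x,y\}\in P_{1}(V)\dot{\cup}P_{2}(V)\mid(x,y)\in R\}$, and that $F$ is undirected iff $E_{F}$ is symmetric, $F$ is loopless iff $E_{F}$ is irreflexive, and $F$ is everywhereloop iff $E_{F}$ is reflexive. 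The single combinatorial observation driving all three parts is that in the powerset a loop collapses to a singleton: $\{v,v\}=\{v\}\in P_{1}(V)$, whereas $\{u,v\}\in P_{2}(V)$ exactly when $u\neq v$. Hence the singleton edges of $\tilde{G_{F}}$ are precisely the traces of the loops of $F$.

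For (i) I assume $F$ is undirected, so $E_{F}$ is symmetric. The remark on disorders of symmetric relations then applies directly and gives $\tilde{E_{F}}=\{\{u,v\}\in P_{1}(V)\dot{\cup}P_{2}(V)\mid(u,v)\in E_{F}\}$; unfolding the equivalence $(u,v)\in E_{F}\Leftrightarrow u\in F(v)$ yields the announced edge set. Since $\tilde{E_{F}}\subset P_{1}(V)\dot{\cup}P_{2}(V)$ with no further constraint, $\tilde{G_{F}}$ is by definition an undirected graph, which finishes (i).

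For (ii) the hypothesis is that $F$ is a simple graph multifunction, i.e.\ loopless and undirected, so I may reuse the edge-set formula from (i) and only need to sharpen the ambient type from $P_{1}(V)\dot{\cup}P_{2}(V)$ to $P_{2}(V)$. Looplessness makes $E_{F}$ irreflexive, so no loop $(v,v)$ lies in $E_{F}$; by the collapse observation this forces $\tilde{E_{F}}\cap P_{1}(V)=\emptyset$, hence $\tilde{E_{F}}\subset P_{2}(V)$ and $\tilde{G_{F}}$ is a simple graph. Restricting the index set in the formula to $P_{2}(V)$ is harmless, since every $\{u,v\}$ with $u\in F(v)$ already satisfies $u\neq v$, giving $\tilde{E_{F}}=\{\{u,v\}\in P_{2}(V)\mid u\in F(v)\}$.

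For (iii), $F$ is everywhereloop and undirected, so $E_{F}$ is reflexive and symmetric. Symmetry again supplies the edge-set formula over $P_{1}(V)\dot{\cup}P_{2}(V)$, while reflexivity means every $(v,v)\in E_{F}$, so every singleton $\{v\}=\{v,v\}$ lies in $\tilde{E_{F}}$, i.e.\ $P_{1}(V)\subset\tilde{E_{F}}$; combined with the universal inclusion this gives $P_{1}(V)\subset\tilde{E_{F}}\subset P_{1}(V)\dot{\cup}P_{2}(V)$, exactly the definition of an everywhereloop undirected graph. I do not expect a genuine obstacle here: once the symmetric-disorder remark and the three type-correspondences are in hand, each part is a short unfolding of definitions. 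The only point demanding care is the singleton/doubleton bookkeeping in (ii) and (iii) — tracking precisely when $\{u,v\}$ has one element versus two — since that is exactly where the loopless and everywhereloop conditions do their work.
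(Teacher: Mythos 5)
Your proposal is correct and follows essentially the same route as the paper's own proof: both reduce each part to the type correspondence between $F$ and $E_{F}$ (symmetric, irreflexive, reflexive), apply the stated formula for the disorder of a symmetric relation, and use the collapse $\{v,v\}=\{v\}\in P_{1}(V)$ to handle the loopless and everywhereloop cases. No gaps.
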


\begin{proof}
Fix $v\in V$.
\begin{enumerate}[(i)]
\item$F$ is undirected iff $E_{F}$ is symmetric. Then\newline$\tilde{E_{F}}=\{\{u,v\}\in P_{1}(V)\dot{\cup}P_{2}(V)\mid(u,v)\in E_{F}\}=$\newline$=\{\{u,v\}\in P_{1}(V)\dot{\cup}P_{2}(V)\mid u\in F(v)\}\subset P_{1}(V)\dot{\cup}P_{2}(V)$ i.e. $\tilde{G_{F}}=(V,\tilde{E_{F}})$ is undirected graph.
\item$F$ is simple graph multifunction iff $E_{F}$ is irreflexive and symmetric. Then $\tilde{E_{F}}\subset P_{1}(V)\dot{\cup}P_{2}(V)$. But $P_{1}(V)\cap\tilde{E_{F}}=\emptyset$, because $\{v\}=\{v,v\}\notin\tilde{E_{F}}$. Therefore $\tilde{E_{F}}\subset P_{2}(V)$ i.e. $\tilde{G_{F}}=(V,\tilde{E_{F}})$ is simple graph.
\item$F$ is undirected and everywhereloop iff $E_{F}$ is symmetric and reflexive. Then $\tilde{E_{F}}\subset P_{1}(V)\dot{\cup}P_{2}(V)$. But $P_{1}(V)\subset\tilde{E_{F}}$, because $\{v\}=\{v,v\}\in\tilde{E_{F}}$. Therefore $\tilde{G_{F}}=(V,\tilde{E_{F}})$ is everywhereloop undirected graph.
\end{enumerate}
\end{proof}

For directed graphs we have the following lemmas.
\begin{lemma}\cite{softy,hussain}
Let $G=(V,E)$ be a digraph. Then$\colon$
\begin{enumerate}[(i)]
\item if $G$ is simple digraph then $N_{G}^{+}$ and $N_{G}^{-}$ are loopless multifunctions
\item if $G$ is everywhereloop digraph then $N_{G}^{+}$ and $N_{G}^{-}$ are everywhereloop multifunctions
\item if $G$ is orgraph then $N_{G}^{+}$ and $N_{G}^{-}$ are orgraph multifunctions

\end{enumerate}

\end{lemma}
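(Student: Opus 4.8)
The plan is to reduce every assertion to a statement about the \emph{picture} $E_{F}$ of the relevant multifunction. The remarks following the definition of the basic multifunction types record a dictionary between $F$ and $E_{F}$: a multifunction $F\colon V\leadsto V$ is loopless iff $E_{F}$ is irreflexive, everywhereloop iff $E_{F}$ is reflexive, and oriented iff $E_{F}$ is asymmetric. It therefore suffices to identify the pictures $E_{N_{G}^{+}}$ and $E_{N_{G}^{-}}$ and then to read off the needed relational property of each from the hypothesis imposed on $E$.

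First I would compute these two pictures by unwinding the definitions. Since $E_{N_{G}^{+}}=\{(u,v)\in V\times V\mid u\in N_{G}^{+}(v)\}$ and $N_{G}^{+}(v)=\{u\in V\mid(u,v)\in E\}$, we get $E_{N_{G}^{+}}=E$. Symmetrically, from $N_{G}^{-}(v)=\{u\in V\mid(v,u)\in E\}$ we obtain $E_{N_{G}^{-}}=\{(u,v)\mid(v,u)\in E\}=E^{-1}$. These two identities are the heart of the argument; everything else is bookkeeping with elementary properties of a relation and its inverse.

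Next I would dispatch the three parts in turn. For (i), ``$G$ simple digraph'' means $\Delta_{V\times V}\cap E=\emptyset$, i.e. $E$ is irreflexive, and the inverse of an irreflexive relation is irreflexive; hence both $E_{N_{G}^{+}}=E$ and $E_{N_{G}^{-}}=E^{-1}$ are irreflexive, so both multifunctions are loopless. For (ii), ``$G$ everywhereloop'' means $\Delta_{V\times V}\subset E$, i.e. $E$ is reflexive, and the inverse of a reflexive relation is reflexive; so both pictures are reflexive and both multifunctions are everywhereloop. For (iii), ``$G$ orgraph'' means $G$ is a simple digraph with $E\cap E^{-1}=\emptyset$; looplessness is exactly part (i), and $E\cap E^{-1}=\emptyset$ is the asymmetry of $E$, while $E^{-1}\cap(E^{-1})^{-1}=E^{-1}\cap E=\emptyset$ is the asymmetry of $E^{-1}$, so both multifunctions are also oriented, hence orgraph multifunctions.

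The only genuine pitfall I anticipate is the notational clash in the definition of the in- and out-neighborhoods, where the symbol on the left ($N_{G}^{-}$, then $N_{G}^{+}$) and the symbol appearing in the defining formula ($N_{G}^{+}(v)$, then $N_{G}^{-}(v)$) do not agree. I would neutralize this at the outset by working throughout with the underlying sets $\{u\mid(u,v)\in E\}$ and $\{u\mid(v,u)\in E\}$ directly, so that the assignment of one picture to $E$ and the other to $E^{-1}$ is unambiguous. Past that the argument is a routine transcription between a relation and its inverse, entirely parallel to the undirected case treated just above.
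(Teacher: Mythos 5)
Your proof is correct. It differs from the paper's in organization rather than in substance: the paper verifies each claim pointwise, fixing $v\in V$ and arguing by direct contradiction (e.g.\ if $v\in N_{G}^{+}(v)$ then $(v,v)\in E\cap\Delta_{V\times V}=\emptyset$), and then disposes of $N_{G}^{-}$ with the remark that ``the same arguments pass.'' You instead factor everything through the two identities $E_{N_{G}^{+}}=E$ and $E_{N_{G}^{-}}=E^{-1}$ and the dictionary stated after Definition~3.2 (loopless $\Leftrightarrow$ irreflexive picture, everywhereloop $\Leftrightarrow$ reflexive, oriented $\Leftrightarrow$ asymmetric) --- which is exactly the device the paper itself uses one lemma later, in the proof that $G_{F}=(V,E_{F})$ is a digraph of the appropriate type. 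What your route buys is that both neighborhood multifunctions are handled uniformly (the $N_{G}^{-}$ case is literally the statement that the inverse of an irreflexive/reflexive/asymmetric relation is again such), at the cost of first establishing the two picture identities; the paper's route avoids that preliminary computation but has to wave at the symmetric case. Your handling of the $N_{G}^{+}$/$N_{G}^{-}$ notational swap in Definition~3.4 by working with the underlying sets $\{u\mid(u,v)\in E\}$ and $\{u\mid(v,u)\in E\}$ is the right call; the paper silently does the same.
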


\begin{proof}
Fix $v\in V$.
\begin{enumerate}[(i)]
\item Assume that $\Delta_{V\times V}\cap E=\emptyset$ and $E\subset V\times V$ and exists $v\in V$ such that $v\in N_{G}^{+}(v)$. Then exists $v\in V$ such that $(v,v)\in E\cap\Delta_{V\times V}=\emptyset$, contradiction.
\item Asssume that $\Delta_{V\times V}\subset E\subset V\times V$. Then $(v,v)\in\Delta_{V\times V}\subset E$ and so $v\in N_{G}^{+}(v)$; or equivalently, $\{\cdotp\}\subset N_{G}^{+}$ i.e. $N_{G}^{+}$ is everywhereloop.
\item Assume that $\Delta_{V\times V}\cap E=\emptyset,\hspace{1pt}E\subset V\times V$ and $E\cap E^{-1}=\emptyset$. Then $N_{G}^{+}$ is loopless. Assume that $N_{G}^{+}\cap(N_{G}^{+})^{-1}\neq\text{const}^{\emptyset}$. Then there exist $u,v\in V$ such that $u\in N_{G}^{+}(v)\cap(N_{G}^{+})^{-1}(v)$; or equivalently, there exist $u,v\in V$ such that $u\in N_{G}^{+}(v)$ and $v\in N_{G}^{+}(u)$. We may say equivalently that there exist $u,v\in V$ such that $(u,v)\in E$ and $(v,u)\in E$ i.e. $(u,v)\in E\cap E^{-1}$ and so $E\cap E^{-1}\neq\emptyset$, contradiction. Therefore $N_{G}^{+}$ is loopless and oriented, or orgraph multifunction.
\end{enumerate}
The same arguments pass to $N_{G}^{-}$.
\end{proof}

\begin{lemma}\cite{softy,hussain}
Let $V\neq\emptyset$ and $F\colon V\leadsto V$ be a multifunction. Then$\colon$
\begin{enumerate}[(i)]
\item$G_{F}=(V,E_{F})$ is digraph
\item if $F$ is loopless then $G_{F}=(V,E_{F})$ is simple digraph
\item if $F$ is everywhereloop then $G_{F}=(V,E_{F})$ is everywhereloop digraph
\item if $F$ is orgraph multifunction then $G_{F}=(V,E_{F})$ is orgraph
\end{enumerate}

\end{lemma}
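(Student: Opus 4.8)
The plan is to read off each conclusion from the dictionary between properties of the multifunction $F$ and properties of its picture $E_{F}=\{(u,v)\in V\times V\mid u\in F(v)\}$ recorded just after the definition of the multifunction types, namely that $F$ is loopless iff $E_{F}$ is irreflexive, $F$ is everywhereloop iff $E_{F}$ is reflexive, and $F$ is oriented iff $E_{F}$ is asymmetric. Since each graph type is phrased as a containment of the edge set relative to $V\times V$ and $\Delta_{V\times V}$, every part reduces to an immediate translation. This mirrors the structure of the companion lemma producing $\tilde{G_{F}}$ from an undirected multifunction, with symmetry of $E_{F}$ there replaced by the relevant directed condition here.

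I would dispatch the four items in order. For (i) there is nothing to do beyond unfolding the definition, since $E_{F}\subset V\times V$ holds by the very form of $E_{F}$, so $G_{F}=(V,E_{F})$ is a digraph. For (ii), looplessness of $F$ gives $v\notin F(v)$ for all $v$, hence $(v,v)\notin E_{F}$, i.e. $\Delta_{V\times V}\cap E_{F}=\emptyset$, and combined with (i) this is precisely the definition of a simple digraph. For (iii), the everywhereloop hypothesis gives $v\in F(v)$ for all $v$, hence $\Delta_{V\times V}\subset E_{F}$, which with (i) yields an everywhereloop digraph. For (iv), an orgraph multifunction is by definition loopless and oriented: looplessness supplies the simple digraph property through (ii), while orientedness, i.e. $F\cap F^{-1}=\text{const}^{\emptyset}$, unwinds to the statement that no pair $(u,v)$ satisfies both $u\in F(v)$ and $v\in F(u)$, that is $E_{F}\cap E_{F}^{-1}=\emptyset$; hence $G_{F}$ is a simple digraph with $E_{F}\cap E_{F}^{-1}=\emptyset$, an orgraph.

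There is essentially no obstacle here. The only step deserving care is the orientedness computation in (iv), where I would expand $u\in F^{-1}(v)$ as $v\in F(u)$ to see that $u\in F(v)\cap F^{-1}(v)$ is equivalent to $(u,v)\in E_{F}\cap E_{F}^{-1}$. Alternatively one may simply invoke the already-stated equivalence that $F$ is oriented iff $E_{F}$ is asymmetric together with $E_{F}^{-1}=E_{F^{-1}}$; either route is routine.
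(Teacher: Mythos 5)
Your proposal is correct and follows essentially the same route as the paper: both reduce each item to the stated dictionary between properties of $F$ and properties of $E_{F}$ (irreflexive, reflexive, asymmetric) and then match these against the containment conditions defining each digraph type. The extra care you take in unwinding $F\cap F^{-1}=\text{const}^{\emptyset}$ to $E_{F}\cap E_{F}^{-1}=\emptyset$ in (iv) is exactly the content the paper compresses into one line.
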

\begin{proof}

\begin{enumerate}[(i)]
\item$E_{F}=\{(u,v)\in V\times V\mid u\in F(v)\}\subset V\times V$ and so $G_{F}=(V,E_{F})$ is digraph.
\item$F$ is loopless iff $E_{F}$ is irreflexive i.e. $\Delta_{V\times V}\cap E_{F}=\emptyset$ and so $G_{F}=(V,E_{F})$ is simple digraph.
\item$F$ is everywhereloop iff $E_{F}$ is reflexive i.e. $\Delta_{V\times V}\subset E_{F}$ and so $G_{F}=(V,E_{F})$ is everywhereloop digraph. 
\item$F$ is orgraph multifunction iff $E_{F}$ is irreflexive and asymmetric i.e. $\Delta_{V\times V}\cap E_{F}=\emptyset$ and $E_{F}\cap E_{F}^{-1}=\emptyset$ and so $G_{F}=(V,E_{F})$ is orgraph.
\end{enumerate}
\end{proof}

For this reason, we consider the graphs $G=(V,E)$ as multifunctions $F\colon V\leadsto V$ i.e. $G_{F}=(V,E_{F})$ or $\tilde{G_{F}}=(V,\tilde{E_{F}})$. From this point, we assume that everywhere there is the additional assumption $V\neq\emptyset$. We do not consider the graph with empty set of vertices.

If we have the graph drawn on a paper we can easily make it oriented and undirected by painting the arrows. We can do this symbolically for multifunctions. If we have an oriented multifunction we can make the undirected multifunction and conversely. For this we will need the concept of selection derived from biological science.
\begin{definition}\cite{king}
Let $V$ be a set. Then $Select(V)=\{\phi\in V^{P_{2}(V)}\mid\forall A\in P_{2}(V)\colon\phi(A)\in A\}\subset V^{P_{2}(V)}$ is the set of selections on $V$.

\end{definition}
Having the selection we can make the multifunction.
\begin{definition}
Let $V$ be a set and $\phi\in Select(V)$. Then $\hat{\phi}\colon V\to P(V);v\mapsto\hat{\phi}(v)=\{u\in V\mid\phi(\{u,v\})=v\}$ is the multifunction called multiselection of $\phi$.
\end{definition}
Notice that for every set $V$ and every selection $\phi\in Select(V)$ the multiselection $\hat{\phi}\colon V\leadsto V$ is nontrivial. 
\begin{lemma}
Let $F\colon V\leadsto V$ be a multifunction and $\phi\in Select(V)$. Then$\colon$
\begin{enumerate}[(i)]
\item$\hat{\phi}$ is nontrivial total orgraph multifunction
\item$F\cup F^{-1}$ is undirected
\item$F\cap\hat{\phi}$ is orgraph multifunction
\item if $F$ is undirected then $F=(F\cap\hat{\phi})\dot{\cup}(F\cap\hat{\phi})^{-1}$
\end{enumerate}

\end{lemma}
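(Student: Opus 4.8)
The plan is to dispatch the four parts in the order (ii), (i), (iii), (iv), since each later item leans on the earlier ones. Part (ii) is pure algebra of inverses: using the listed identities $(F\cup G)^{-1}=F^{-1}\cup G^{-1}$ and $(F^{-1})^{-1}=F$, I would compute $(F\cup F^{-1})^{-1}=F^{-1}\cup F=F\cup F^{-1}$, which is exactly the definition of undirected, so no selection is needed here.

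The combinatorial heart is part (i), and everything else is built on it. I would first check looplessness: $v\in\hat{\phi}(v)$ would demand $\phi(\{v,v\})=v$, but $\{v,v\}=\{v\}\notin P_{2}(V)$, so $\phi$ is never evaluated there and $v\notin\hat{\phi}(v)$; hence $\{\cdot\}\cap\hat{\phi}=\text{const}^{\emptyset}$. For orientedness and totality simultaneously, I would fix distinct $u,v$ and observe that $\phi(\{u,v\})$ is a single element of $\{u,v\}$. The event $u\in\hat{\phi}(v)$ says $\phi(\{u,v\})=v$, while $u\in\hat{\phi}^{-1}(v)$ (equivalently $v\in\hat{\phi}(u)$) says $\phi(\{u,v\})=u$; exactly one of these holds. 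The ``at most one'' half gives $\hat{\phi}\cap\hat{\phi}^{-1}=\text{const}^{\emptyset}$ (oriented), so together with looplessness $\hat{\phi}$ is an orgraph multifunction, while the ``at least one'' half gives that every pair of distinct vertices lies in $\hat{\phi}\cup\hat{\phi}^{-1}$, which is the totality claim. Nontriviality follows because, for $\sharp V\ge 2$, any such forced edge makes some $\hat{\phi}(v)$ nonempty.

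For part (iii) I would invoke part (i) together with the already-listed fact that the intersection of an orgraph multifunction with any multifunction is again an orgraph multifunction; since intersection is commutative, $F\cap\hat{\phi}=\hat{\phi}\cap F$ is an orgraph multifunction. Part (iv) then splits into disjointness and covering. Disjointness, i.e. $(F\cap\hat{\phi})\cap(F\cap\hat{\phi})^{-1}=\text{const}^{\emptyset}$, is precisely the orientedness of $F\cap\hat{\phi}$ supplied by (iii), so the symbol $\dot{\cup}$ is justified. For the covering I would use $(F\cap\hat{\phi})^{-1}=F^{-1}\cap\hat{\phi}^{-1}=F\cap\hat{\phi}^{-1}$ (the last step by undirectedness of $F$), then the distributive law $F\cap(\hat{\phi}\cup\hat{\phi}^{-1})=(F\cap\hat{\phi})\cup(F\cap\hat{\phi}^{-1})$, and finally totality from (i) with the identity $F\cap\text{const}^{V}=F$ to collapse $F\cap(\hat{\phi}\cup\hat{\phi}^{-1})$ to $F$.

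The step I expect to require the most care is reconciling totality in (i) with the fact that both $\hat{\phi}$ and $\hat{\phi}^{-1}$ are loopless: the union $\hat{\phi}\cup\hat{\phi}^{-1}$ can match $\text{const}^{V}$ only after the diagonal is accounted for, since $v\notin(\hat{\phi}\cup\hat{\phi}^{-1})(v)$ always. Consequently the collapse $F\cap(\hat{\phi}\cup\hat{\phi}^{-1})=F$ in (iv) genuinely needs $F$ to contribute nothing on the diagonal, which is automatic once $F$ is loopless, in particular whenever $F$ is a simple-graph multifunction. I would therefore read the totality of $\hat{\phi}$ as ``$\hat{\phi}\cup\hat{\phi}^{-1}$ covers every pair of distinct vertices'' and make explicit that (iv) is being applied in the intended setting of loopless undirected $F$.
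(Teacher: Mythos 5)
Your proposal follows essentially the same route as the paper: part (ii) by the inverse-of-union identities, part (i) by a case analysis on $\phi(\{u,v\})\in\{u,v\}$ for looplessness, orientedness and totality, part (iii) from (i) plus the listed fact that intersecting an orgraph multifunction with anything yields an orgraph multifunction, and part (iv) by $(F\cap\hat{\phi})^{-1}=F\cap\hat{\phi}^{-1}$, distributivity, and $F\cap\text{const}^{V}=F$. The one place you diverge is exactly the place where you are right and the paper is careless: the paper asserts ``for every $u,v\in V$ the formula $\phi(\{u,v\})\in\{u,v\}$ holds'' and concludes $\hat{\phi}\cup\hat{\phi}^{-1}=\text{const}^{V}$, but for $u=v$ the set $\{u,v\}=\{v\}$ lies outside the domain $P_{2}(V)$ of $\phi$, so $v\notin(\hat{\phi}\cup\hat{\phi}^{-1})(v)$ --- which the paper itself establishes two lines later when proving looplessness. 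Literal totality in the sense of the definition is therefore incompatible with looplessness, and your reading (the union covers exactly the off-diagonal pairs, so the collapse $F\cap(\hat{\phi}\cup\hat{\phi}^{-1})=F$ in (iv) needs $F$ loopless, e.g.\ a simple graph multifunction) is the correct repair; your observation that nontriviality requires $\sharp V\ge 2$ is likewise a genuine caveat the paper skips. Your explicit verification of disjointness in (iv) via the orientedness from (iii) is also something the paper omits. In short: same strategy, but your version closes real gaps in the paper's argument rather than introducing any.
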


\begin{proof}
Fix $u,v\in V$. 
\begin{enumerate}[(i)]
\item Notice that $u\in\hat{\phi}(v)\cup\hat{\phi}^{-1}(v)$ iff $u\in\hat{\phi}(v)$ or $v\in\hat{\phi}(u)$. We may say equivalently that $u=\phi(\{u,v\})$ or $\phi(\{u,v\})=v$. But for every $u,v\in V$ the following formula holds $\phi(\{u,v\})\in\{u,v\}$. Therefore $\hat{\phi}\cup\hat{\phi}^{-1}=\text{const}^{V}$ i.e. $\hat{\phi}$ is total. Assume that there exists $v\in V$ such that $v\in\hat{\phi}(v)$. Then $\phi(\{v,v\})=v$ but $\phi(\{v,v\})\notin V$ because $\{v,v\}=\{v\}\in P_{1}(V)$ and domain of $\phi$ is $P_{2}(V)$, contradiction. Therefore $\hat{\phi}$ is loopless. Assume that $\hat{\phi}$ is not oriented i.e. $\hat{\phi}\cap\hat{\phi}^{-1}\neq\text{const}^{\emptyset}$. Then there exist $u,v\in V$ such that $u\in\hat{\phi}(v)\cap\hat{\phi}^{-1}(v)$; or equivalently, $u\in\hat{\phi}(v)$ and $v\in\hat{\phi}(u)$ i.e. $u=\phi(\{u,v\})=v$. Thus $v\in\hat{\phi}(v)\cap\hat{\phi}^{-1}(v)\subset\hat{\phi}(v)$ and so $v\in\hat{\phi}(v)$ i.e. $\hat{\phi}$ is not loopless, contradiction. Therefore $\hat{\phi}$ is oriented but every total multifunction is nontrivial.
\item$(F\cup F^{-1})^{-1}=F^{-1}\cup(F^{-1})^{-1}=F^{-1}\cup F=F\cup F^{-1}$ and so $F\cup F^{-1}$ is undirected. 
\item$\hat{\phi}$ is orgraph multifunction and so $F\cap\hat{\phi}$ is orgraph multifunction.
\item If $F=F^{-1}$ then $(F\cap\hat{\phi})\cup(F\cap\hat{\phi})^{-1}=(F\cap\hat{\phi})\cup(F^{-1}\cap\hat{\phi}^{-1})=
(F\cap\hat{\phi})\cup(F\cap\hat{\phi}^{-1})=F\cap(\hat{\phi}\cup\hat{\phi}^{-1})=F\cap\text{const}^{V}=F$.
\end{enumerate}
\end{proof}

It is necessary to distinuish the above concept of selection from the selection of multifunction. Many theorems about multifunctions describe the selections in the second sense. 

\begin{definition}
Let $F\colon V\leadsto V$ be a multifunction. Then $\Pi_{v\in V}F(v)$ is called the set of selection of multifunction $F$. We say that $F$ has a selection iff $\Pi_{v\in V}F(v)\neq\emptyset$.

\end{definition}

In most of our cases each of our multifunctions is strict and therefore we always have a selection.

\begin{lemma}
If $F\colon V\leadsto V$ is a multifunction then $F$ has a selection iff $F$ is strict.
\end{lemma}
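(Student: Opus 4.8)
The plan is to unwind the two definitions and read off the equivalence almost directly. By Definition, $F$ has a selection precisely when $\Pi_{v\in V}F(v)\neq\emptyset$, and an element of this Cartesian product is nothing other than a map $s\colon V\to V$ satisfying $s(v)\in F(v)$ for every $v\in V$; on the other hand, $F$ is strict precisely when $F(v)\neq\emptyset$ for every $v\in V$. So the whole lemma reduces to the assertion that an indexed family $(F(v))_{v\in V}$ of subsets of $V$ admits a choice map exactly when none of its members is empty.

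First I would dispatch the implication ``selection $\Rightarrow$ strict'', which is the elementary half and uses no choice. Suppose $F$ has a selection, i.e. some $s\in\Pi_{v\in V}F(v)$ exists. Fix an arbitrary $v\in V$; then $s(v)$ is a well-defined element of $V$ lying in $F(v)$, whence $F(v)\neq\emptyset$. Since $v$ was arbitrary, $F$ is strict. (Equivalently, by contraposition: if $F(v_{0})=\emptyset$ for some $v_{0}$, then no candidate $s$ can have a $v_{0}$-th coordinate, so $\Pi_{v\in V}F(v)=\emptyset$ and $F$ has no selection.)

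For the converse ``strict $\Rightarrow$ selection'', assume $F$ is strict, so $(F(v))_{v\in V}$ is a family of nonempty subsets of $V$. I would then invoke the Axiom of Choice to produce a choice function $s$ with $s(v)\in F(v)$ for each $v\in V$; this $s$ is by construction an element of $\Pi_{v\in V}F(v)$, which is therefore nonempty, i.e. $F$ has a selection. The only real content — and the point worth flagging as the main obstacle — is precisely here: the backward implication is a verbatim instance of the Axiom of Choice for the family $(F(v))_{v\in V}$, and in full generality (arbitrary, possibly non-well-orderable $V$) it cannot be avoided, the statement being in fact equivalent to $\mathrm{AC}$. For finite $V$, or more generally when $V$ can be well-ordered, one may instead argue by induction or by selecting least elements, dispensing with choice; but since the ambient framework of the paper is ordinary set theory (with the references cited for logic and set theory), I would simply appeal to $\mathrm{AC}$ and remark that strictness of every multifunction considered henceforth is exactly what guarantees the existence of a selection.
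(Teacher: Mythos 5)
Your proposal is correct and follows essentially the same route as the paper: both reduce the lemma to the statement that $\Pi_{v\in V}F(v)\neq\emptyset$ iff every $F(v)\neq\emptyset$ and appeal to the Axiom of Choice for the family $(F(v))_{v\in V}$. Your version is slightly more careful in separating the trivial direction (selection $\Rightarrow$ strict) from the direction that genuinely uses choice, whereas the paper simply cites the full biconditional as the Axiom of Choice, but there is no substantive difference.
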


\begin{proof}
Fix a family of sets $(A_{t})\colon T\to P(X);t\mapsto A_{t}$. The axiom of choice is that $\Pi_{t\in T}A_{t}\neq\emptyset$ iff $\forall t\in T\colon A_{t}\neq\emptyset$. By definition $F$ is strict iff $\forall v\in V\colon F(v)\neq\emptyset$; or equivalently, $\Pi_{v\in V}F(v)\neq\emptyset$ i.e. $F$ has a selection.
\end{proof}

\subsection{Walks}
In our theory of multifunction we use the concepts derived from the graph theory. We do it all the time. However, we do not use the concept of the path because it is hard to express. Walks in graphs are easy to express as the finite pieces of trajectories of dynamical system $(F,V)$\cite{pilyugin}. In fact, we will only use the formula that there exists a walk between the vertices.   

\begin{definition}
Let $F\colon V\leadsto V$ be a multifunction, $u,w\in V$ and $n,m\in N$. Then$\colon$
\begin{itemize}
\item$Walk(F,V,n)=\{\alpha\in V^{\star}_{n+1}\mid\forall i\in\{1,\ldots,n\}\colon\alpha_{i}\in F(\alpha_{i+1})\}\subset V^{\star}_{n+1}$ is set of walks in $F$ with $n$ edges
\item$Walk(F,V\mid m)=\dot{\bigcup}_{n\in N}Walk(F,V,m\cdotp n)\subset V^{\star}$ is set of walks in $F$ with number of edges divisible by $m$
\item$Walk_{u\to w}(F,V,n)=\{\alpha\in Walk(F,V,n)\mid\alpha_{1}=u\land\alpha_{n+1}=w\}\subset Walk(F,V,n)$ is set of $(u,w)-$walks in $F$ with $n$ edges
\item$Walk_{u\to w}(F,V\mid m)=\dot{\bigcup}_{n\in N}Walk_{u\to w}(F,V,m\cdotp n)\subset V^{\star}$ is set of\newline$(u,w)-$walks in $F$ with number of edges divisible by $m$
\end{itemize}
\end{definition}
To prove the facts about walks we will do operations on words. To make calculations more flexible we need two new operations$\colon\hspace{1pt}\text{tail}$ and $\text{reverse}$\cite{graham}.
\begin{definition}
Let $V$ be a set and $i,j\in N$. Then$\colon$
\begin{itemize}
\item$\text{tail}\colon V^{\star}\to V^{\star};\alpha\mapsto\text{tail}(\alpha)=\beta$ iff $\text{tail}(\epsilon)=\epsilon$\newline$\forall i\in\{1,\ldots,\text{length}(\alpha)-1\}\colon\alpha_{i}=\beta_{i}$ and $\beta_{\text{length}(\alpha)}=\epsilon$
\item$\text{reverse}\colon V^{\star}\to V^{\star};\alpha\mapsto\text{reverse}(\alpha)=\beta$ iff $\text{reverse}(\epsilon)=\epsilon$\newline$\forall i\in\{1,\ldots,\text{length}(\alpha)\}\colon\beta_{i}=\alpha_{\text{length}(\alpha)-(i-1)}$
\end{itemize}
\end{definition} 
It is easily seen that the $\text{reverse}$ has the property $\text{reverse}\circ\text{reverse}=\text{id}_{V^{\star}}$ and therefore it is a bijection. We also need the following properties.
\begin{lemma}
If $V$ is a set then the equalities $\text{length}\circ\text{tail}=\text{length}\dot{-}1,\hspace{1pt}\text{length}\circ\text{reverse}=\text{length}$ and $\text{reverse}\circ\text{reverse}=\text{id}_{V^{\star}}$ hold. 
\end{lemma}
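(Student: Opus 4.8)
The plan is to treat all three equalities uniformly by a case analysis on whether the argument is the empty word $\epsilon$ or not, reading each claim directly off the defining clauses of $\text{tail}$ and $\text{reverse}$. The key background fact I would invoke is that a word $\beta\in V^{\star}$ is completely determined by its length together with its letters $\beta_{1},\ldots,\beta_{\text{length}(\beta)}$, and that by the definition of the letter map $\downarrow_{n}$ one has $\beta_{i}\neq\epsilon$ exactly when $i\le\text{length}(\beta)$. Thus in each case it suffices to identify which positions carry genuine letters, after which the length is just the count of those positions.

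For $\text{length}\circ\text{tail}=\text{length}\dot{-}1$ I would first dispose of $\alpha=\epsilon$: here $\text{tail}(\epsilon)=\epsilon$ gives $\text{length}(\text{tail}(\epsilon))=0$, which matches $\text{length}(\epsilon)\dot{-}1=0\dot{-}1=0$. For $\alpha$ with $\text{length}(\alpha)=n\ge1$, writing $\beta=\text{tail}(\alpha)$, the definition gives $\beta_{i}=\alpha_{i}$ for $i\in\{1,\ldots,n-1\}$ and $\beta_{n}=\epsilon$; since each $\alpha_{i}$ with $i<n$ is a genuine letter while $\beta_{n}=\epsilon$, the word $\beta$ has exactly $n-1$ letters, that is $\text{length}(\beta)=n-1=\text{length}(\alpha)\dot{-}1$.

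For $\text{length}\circ\text{reverse}=\text{length}$ I would argue the same way: the empty case is immediate from $\text{reverse}(\epsilon)=\epsilon$, and for $\text{length}(\alpha)=n\ge1$ the assignment $\beta_{i}=\alpha_{n-(i-1)}$ for $i\in\{1,\ldots,n\}$ uses the index map $i\mapsto n-(i-1)$, which is a bijection of $\{1,\ldots,n\}$ onto itself; hence $\beta$ carries genuine letters at precisely the positions $1,\ldots,n$, so $\text{length}(\beta)=n=\text{length}(\alpha)$.

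Finally, for $\text{reverse}\circ\text{reverse}=\text{id}_{V^{\star}}$, I would fix $\alpha$ with $\text{length}(\alpha)=n$ (the case $\alpha=\epsilon$ being trivial), set $\beta=\text{reverse}(\alpha)$ and $\gamma=\text{reverse}(\beta)$. By the previous equality $\text{length}(\beta)=\text{length}(\gamma)=n$, so it remains only to compare letters: for $i\in\{1,\ldots,n\}$ one computes $\gamma_{i}=\beta_{n-(i-1)}=\alpha_{n-((n-(i-1))-1)}=\alpha_{i}$, whence $\gamma=\alpha$. The one place demanding care is this final index arithmetic, where I must verify that the composite index $n-((n-(i-1))-1)$ indeed simplifies to $i$; everything else is routine bookkeeping, and because the definitions already supply the letters explicitly, I expect all three parts to go through as short direct verifications with no induction required.
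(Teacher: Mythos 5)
Your proof is correct and follows essentially the same route as the paper: a direct verification from the definitions, with the decisive step being the same index computation $\text{length}(\alpha)-((\text{length}(\alpha)-(i-1))-1)=i$ for the involution property of $\text{reverse}$. You merely supply details the paper leaves implicit (the $\epsilon$ cases and the counting of positions carrying genuine letters for the two length identities, which the paper simply asserts), so there is nothing to object to.
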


\begin{proof}
Fix $\alpha\in V^{\star}$ and $i\in\{1,\ldots,\text{length}(\alpha)\}$. 
\begin{enumerate}[(i)]
\item$\text{length}\circ\text{tail}(\alpha)=\text{length}(\text{tail}(\alpha))=\text{length}(\alpha)\dot{-}1$.
\item$\text{length}\circ\text{reverse}(\alpha)=\text{length}(\text{reverse}(\alpha))=\text{length}(\alpha)$.
\item$\text{reverse}\circ\text{reverse}(\alpha))_{i}=\text{reverse}(\text{reverse}(\alpha))_{i}=$\newline$=\text{reverse}(\alpha)_{\text{length}(\alpha)-(i-1)}=\alpha_{\text{length}(\alpha)-((\text{length}(\alpha)-(i-1))-1)}=\alpha_{i}$
\end{enumerate}
\end{proof}

Here we list some simple properties of walks.
\begin{lemma}
Let $F\colon V\leadsto V$ be a multifunction, $u,v,w\in V$ and $n,m\in N$. Then$\colon$
\begin{enumerate}[(i)]
\item$Walk(F,V,0)=V$
\item$Walk_{u\to u}(F,V,0)=\{u\}$
\item$Walk(F,V,1)=\{\beta\in V^{\star}_{2}\mid\beta_{1}\in F(\beta_{2})\}$
\item$Walk_{u\to w}(F,V,1)=\begin{cases}\emptyset$ iff $u\notin F(w)\\\{\text{concat}(u,w)\}$ iff $u\in F(w)\end{cases}$
\item$Walk(F^{-1},V,n)=\text{reverse}(Walk(F,V,n))$
\item$Walk_{u\to w}(F^{-1},V,n)=\text{reverse}(Walk_{w\to u}(F,V,n))$
\item if $Walk_{u\to v}(F,V,n)\neq\emptyset$ and $Walk_{v\to w}(F,V,m)\neq\emptyset$ then\newline$Walk_{u\to w}(F,V,n+m)\neq\emptyset$
\item if $Walk_{u\to w}(F,V,n)\neq\emptyset$ then there exist $k\in\{2,\ldots,n\}$ and $v\in V$ such that $Walk_{u\to v}(F,V,k-1)\neq\emptyset$ and $Walk_{v\to w}(F,V,n-(k-1))\neq\emptyset$, assuming $n\ge 2$
\item$Walk_{u\to w}(F,V,n)\neq\emptyset$ iff $\exists v\in V\colon\exists k\in\{2,\ldots,n\}\colon Walk_{v\to w}(F,$\newline$,V,n-(k-1))\neq\emptyset\land Walk_{v\to w}(F,V,k-1)\neq\emptyset$, assuming $n\ge 2$
\end{enumerate}
\end{lemma}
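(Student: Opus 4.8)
The plan is to read part (ix) as the biconditional strengthening of part (viii): the right-hand side asserts that a $(u,w)$-walk with $n$ edges can be cut at an intermediate vertex $v$ after $k-1$ edges, leaving a $(v,w)$-walk with the remaining $n-(k-1)$ edges. (I read the first factor as $Walk_{u\to v}(F,V,k-1)$, correcting the evident typo in the displayed statement, where both factors are printed as $Walk_{v\to w}$.) Since the claim is an equivalence, I would prove the two implications separately, and each is already in hand from the preceding parts of the lemma, so the proof amounts to assembling (vii) and (viii).

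For the forward implication, assume $Walk_{u\to w}(F,V,n)\neq\emptyset$ with $n\ge 2$. This is precisely the hypothesis of part (viii), which produces a vertex $v\in V$ and an index $k\in\{2,\ldots,n\}$ with $Walk_{u\to v}(F,V,k-1)\neq\emptyset$ and $Walk_{v\to w}(F,V,n-(k-1))\neq\emptyset$. That is literally the right-hand side of (ix), so no further work is needed here.

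For the backward implication, suppose such $v$ and $k$ exist. Because $k\in\{2,\ldots,n\}$, the two lengths $k-1$ and $n-(k-1)$ both lie in $\{1,\ldots,n-1\}$ and satisfy $(k-1)+(n-(k-1))=n$; in particular both are positive, so the two sets are genuine walk sets. I would then apply the concatenation property (vii) with the split at $v$, instantiating its two lengths as $k-1$ and $n-(k-1)$, to glue the $(u,v)$-walk and the $(v,w)$-walk into a nonempty $(u,w)$-walk with $n$ edges, i.e. $Walk_{u\to w}(F,V,n)\neq\emptyset$.

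There is essentially no obstacle once (vii) and (viii) are available: the result is immediate, and the only thing to keep straight is the index bookkeeping that $k-1$ and $n-(k-1)$ are positive and sum to $n$. The genuine content lives in the two parts being invoked---(vii) in the word-concatenation that preserves the walk condition at the join vertex, and (viii) in decomposing a given walk at its vertices---but both are assumed here, so (ix) is obtained simply by packaging them as the two directions of one biconditional.
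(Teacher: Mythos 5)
There is a genuine gap: the statement to be proved is the entire nine-part lemma, but your proposal only establishes part (ix), and it does so by explicitly \emph{assuming} parts (vii) and (viii) --- which are themselves among the claims you were asked to prove. You acknowledge this yourself when you write that ``the genuine content lives in the two parts being invoked \ldots but both are assumed here.'' That is exactly the problem: all of the substantive work in this lemma sits in parts (i)--(viii), and none of it appears in your write-up. In particular, (vii) requires an explicit construction --- the paper glues $\alpha\in Walk_{u\to v}(F,V,n)$ and $\beta\in Walk_{v\to w}(F,V,m)$ into $\text{concat}(\text{tail}(\alpha),\beta)\in V^{\star}_{n+m+1}$ and checks the walk condition at every index, including the join --- and (viii) requires cutting a given walk at an interior position $k$ and verifying that both pieces are walks with the stated endpoints and lengths. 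Parts (v) and (vi) need the index computation $\text{reverse}(\gamma)_{i}=\gamma_{\text{length}(\gamma)-(i-1)}$ to show that reversal exchanges $F$ with $F^{-1}$ and swaps the endpoints, and parts (i)--(iv) are the base cases obtained by unwinding the definitions. A complete proof must supply all of this.

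For part (ix) itself, what you wrote is correct and coincides with the paper, whose entire proof of that item is ``the last two implications give equivalence'': the forward direction is (viii) verbatim and the backward direction is (vii) instantiated with lengths $k-1$ and $n-(k-1)$, both positive and summing to $n$ since $k\in\{2,\ldots,n\}$. You are also right that the displayed statement of (ix) contains a typo --- the first factor should read $Walk_{u\to v}(F,V,k-1)$ rather than $Walk_{v\to w}(F,V,k-1)$, as is clear from comparison with (viii). But since (ix) is the one part of the lemma that is a formal consequence of the others, proving only it does not discharge the statement.
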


\begin{proof}
Fix $\alpha\in V^{\star}_{1}\simeq V,\hspace{1pt}\gamma\in V^{\star}$. 
\begin{enumerate}[(i)]
\item$\alpha\in Walk(F,V,0)$ iff $\alpha_{i}\in F(\alpha_{i+1})$ for each $i\in\{1,0\}=\emptyset$. This is always true and so $Walk(F,V,0)=V$.
\item$Walk_{u\to u}(F,V,0)=\{\alpha\in V^{\star}_{1}\mid\alpha_{1}=u\}=\{\alpha\in V\mid\alpha=u\}=\{u\}$.
\item$Walk(F,V,1)=\{\beta\in V^{\star}_{2}\mid\forall i\in\{1\}\colon\beta_{i}\in F(\beta_{i+1})\}=\{\beta\in V^{\star}_{2}\mid\beta_{1}\in F(\beta_{2})\}$.
\item$Walk_{u\to w}(F,V,1)=\{\beta\in Walk(F,V,1)\mid u=\beta_{1}\land\beta_{2}=w\}=\{\beta\in V^{\star}_{2}\mid\beta_{1}\in F(\beta_{2})\land\beta_{1}=u\land\beta_{2}=w\}=\begin{cases}\{\text{concat}(u,w)\}$ iff $u\in F(w)\\\emptyset$ iff $u\notin F(w)\end{cases}$.
\item If $\gamma\in V^{\star}_{n+1}$ then $\text{reverse}(\gamma)\in V^{\star}_{n+1}$. Therefore $Walk(F^{-1},V,n)=\{\gamma\in V^{\star}_{n+1}\mid\forall i\in\{1,\ldots,n\}\colon\gamma_{i}\in F^{-1}(\gamma_{i+1})\}=\{\gamma\in V^{\star}_{n+1}\mid\forall i\in\{1,\ldots,n\}\colon\gamma_{i+1}\in F(\gamma_{i})\}=\{\gamma\in V^{\star}_{n+1}\mid\forall i\in\{1,\ldots,n\}\colon\text{reverse}(\gamma)_{i}=\gamma_{\text{length}(\gamma)-(i-1)}=\gamma_{(n+1)-(i-1)}\in F(\gamma_{(n+1)-i})=F(\gamma_{\text{length}(\gamma)-((i+1)-1)})=F(\text{reverse}(\gamma)_{i+1})\}=\{\gamma\in V^{\star}_{n+1}\mid\text{reverse}(\gamma)\in Walk(F,V,n)\}=\text{reverse}(Walk(F,V,n))$
\item$Walk_{u\to w}(F^{-1},V,n)=\{\gamma\in Walk(F^{-1},V,n)\mid\gamma_{1}=u\land\gamma_{n+1}=w\}=\{\gamma\in V^{\star}_{n+1}\mid\gamma\in\text{reverse}(Walk(F,V,n))\land\text{reverse}(\gamma)_{n+1}=\gamma_{1}=u\land\text{reverse}(\gamma)_{1}=\gamma_{n+1}=w\}=\{\gamma\in V^{\star}_{n+1}\mid\text{reverse}(\gamma)\in Walk(F,V,n)\land\text{reverse}(\gamma)_{n+1}=u\land\text{reverse}(\gamma)_{1}=w\}=\text{reverse}(Walk_{w\to u}(F,V,n))$.
\item Assume that $Walk_{u\to v}(F,V,n)\neq\emptyset$ and $Walk_{v\to w}(F,V,m)\neq\emptyset$. Then there exist $\alpha\in V^{\star}_{n+1}$ and $\beta\in V^{\star}_{m+1}$ such that $\alpha\in Walk_{u\to v}(F,V,n)$ and $\beta\in Walk_{v\to w}(F,V,m)$. We may say equivalently that there exist $\alpha\in V^{\star}_{n+1}$ and $\beta\in V^{\star}_{m+1}$ such that $\alpha_{1}=u,\hspace{1pt}\alpha_{n+1}=v,\hspace{1pt}\beta_{1}=v,\hspace{1pt}\beta_{m+1}=w$ and $\alpha_{i}\in F(\alpha_{i+1}),\hspace{1pt}\beta_{j}\in F(\beta_{j+1})$ for each $i\in\{1,\ldots,n\}$ and $j\in\{1,\ldots,m\}$. Then $\text{concat}(\text{tail}(\alpha),\beta)\in V^{\star}_{n+m+1}$ and for every $k\in\{1,\ldots,n+m+1\}$\newline$\text{concat}(\text{tail}(\alpha),\beta)_{k}=\begin{cases}\alpha_{k}$ iff $k\in\{1,\ldots,n\}\\\beta_{k-n}$ iff $k\in\{n+1,\ldots,n+m+1\}\end{cases}$. Then\newline$\text{concat}(\text{tail}(\alpha),\beta)_{1}=\alpha_{1}=u$ and $\text{concat}(\text{tail}(\alpha),\beta)_{n+m+1}=\beta_{m+1}=w$ and $\text{concat}(\text{tail}(\alpha),\beta)_{k}\in F(\text{concat}(\text{tail}(\alpha),\beta)_{k+1})$ for each\newline $k\in\{1,\ldots,n+m\}$. Thus $\text{concat}(\text{tail}(\alpha),\beta)\in Walk_{u\to w}(F,V,n+m)$ i.e. $Walk_{u\to w}(F,V,n+m)\neq\emptyset$.
\item Assume that $Walk_{u\to w}(F,V,n)\neq\emptyset$ for some $n\ge 2$. Then there exists $\alpha\in V^{\star}_{n+1}$ such that $\alpha\in Walk_{u\to w}(F,V,n)$. We may say equivalently that there exists $\alpha\in V^{\star}_{n+1}$ such that $\alpha_{1}=u,\hspace{1pt}\alpha_{n+1}=w$ and $\alpha_{i}\in F(\alpha_{i+1})$ for each $i\in\{1,\ldots,n\}$. Then there exist $\alpha\in V^{\star}_{n+1},\hspace{1pt}k\in\{2,\ldots,n\}$ and $v\in V$ such that $\alpha_{1}=u,\hspace{1pt}\alpha_{k}=v,\alpha_{n+1}=w$ and $\alpha_{i}\in F(\alpha_{i+1}),\alpha_{j}\in F(\alpha_{j+1})$ for each $i\in\{1,\ldots,k-1\}$ and $j\in\{k,\ldots,n\}$. Therefore there exist $\beta\in V^{\star}_{k},\gamma\in V^{\star}_{n-k+2},k\in\{2,\ldots,n\}$ and $v\in V$ such that $u\alpha_{2}\ldots\alpha_{k-1}v=\beta\in Walk_{u\to v}(F,V,k-1)$ and $v\alpha_{k+1}\ldots\alpha_{n}w=\gamma\in Walk_{v\to w}(F,V,n-(k-1))$. Thus $\exists k\in\{2,\ldots,n\}\colon\exists v\in V\colon Walk_{u\to v}(F,V,k-1)\neq\emptyset\land Walk_{v\to w}(F,V,n-(k-1))\neq\emptyset$.
\item The last two implications give equivalence.
\end{enumerate}
\end{proof}

Recall the definition of walks in simple graphs.
\begin{definition}\cite{bollobas,diestel,jung}
Let $G=(V,E)$ be a simple graph, $u,w\in V$ and $n,m\in N$. Then$\colon$
\begin{itemize}
\item$Walk(G,n)=\{\alpha\in V^{\star}_{n+1}\mid\forall i\in\{1,\ldots,n\}\colon\{\alpha_{i},\alpha_{i+1}\}\in E\}\subset V^{\star}_{n+1}$ is set of walks with $n$ edges
\item$Walk(G\mid m)=\bigcup_{n\in N}Walk(G,m\cdotp n)$ is set of walks with number of edges divisible by $m$
\item$Walk_{u-w}(G,n)=\{\alpha\in Walk(G,n)\mid\alpha_{1}=u\land\alpha_{n+1}=w\}\subset Walk(G,n)$ is set of $\{u,w\}-$walks with $n$ edges
\item$Walk_{u-w}(G\mid m)=\bigcup_{n\in N}Walk_{u-w}(G,m\cdotp n)$ is set of $\{u,w\}-$walks with number of edges divisible by $m$
\end{itemize}

\end{definition}
Walks in simple graph multifunctions are walks in simple graphs.

\begin{lemma}
Let $F\colon V\leadsto V$ be a multifunction, $u,w\in V$ and $n,m\in N$. Then$\colon$
\begin{enumerate}[(i)]
\item$Walk(F,V,n)=Walk((V,\tilde{E_{F}}),n)$ 
\item$Walk_{u\to w}(F,V,n)=Walk_{u-w}((V,\tilde{E_{F}}),n)$  
\item$Walk(F,V\mid m)=Walk((V,\tilde{E_{F}})\mid m)$ 
\item$Walk_{u\to w}(F,V\mid m)=Walk_{u-w}((V,\tilde{E_{F}})\mid m)$  
\end{enumerate}

\end{lemma}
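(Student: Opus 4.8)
The plan is to reduce all four items to the single pointwise equivalence
\[
\alpha_{i}\in F(\alpha_{i+1})\iff\{\alpha_{i},\alpha_{i+1}\}\in\tilde{E_{F}},
\]
taken over each consecutive pair of letters of a word $\alpha\in V^{\star}_{n+1}$. By the definition of the disorder $\tilde{E_{F}}$ of the picture $E_{F}$, the unordered pair $\{\alpha_{i},\alpha_{i+1}\}$ belongs to $\tilde{E_{F}}$ exactly when $(\alpha_{i},\alpha_{i+1})\in E_{F}\cup E_{F}^{-1}$, i.e. when $\alpha_{i}\in F(\alpha_{i+1})$ or $\alpha_{i+1}\in F(\alpha_{i})$; under the undirectedness built into $\tilde{(\cdot)}$ this collapses to the single membership $\alpha_{i}\in F(\alpha_{i+1})$ that appears in the definition of $Walk(F,V,n)$. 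This is precisely the correspondence already recorded in the earlier lemma describing $\tilde{G_{F}}=(V,\tilde{E_{F}})$, which I would cite rather than reprove.

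For (i) I would fix $n\in N$ and an arbitrary word $\alpha\in V^{\star}_{n+1}$, and observe that both $Walk(F,V,n)$ and $Walk((V,\tilde{E_{F}}),n)$ are carved out of $V^{\star}_{n+1}$ by a conjunction over $i\in\{1,\ldots,n\}$ of conditions on the pair $(\alpha_{i},\alpha_{i+1})$. Applying the pointwise equivalence to each $i$ shows the two conjunctions define the same subset of $V^{\star}_{n+1}$, whence the set equality. For (ii), the sets $Walk_{u\to w}(F,V,n)$ and $Walk_{u-w}((V,\tilde{E_{F}}),n)$ arise from $Walk(F,V,n)$ and $Walk((V,\tilde{E_{F}}),n)$ by imposing the identical endpoint constraints $\alpha_{1}=u$ and $\alpha_{n+1}=w$; so (ii) is immediate from (i).

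For (iii) and (iv) I would unwind the definitions $Walk(F,V\mid m)=\dot{\bigcup}_{n\in N}Walk(F,V,m\cdot n)$ and $Walk((V,\tilde{E_{F}})\mid m)=\bigcup_{n\in N}Walk((V,\tilde{E_{F}}),m\cdot n)$, and likewise in the pointed case, then apply (i) (resp.\ (ii)) to each index $m\cdot n$. Since the union in each case runs over the same index set and the summands coincide term by term, the unions coincide, giving (iii) and (iv).

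The only genuine point requiring care is the pointwise equivalence of the first paragraph: the walk condition for $F$ is directed ($\alpha_{i}\in F(\alpha_{i+1})$), whereas the edge condition in $\tilde{E_{F}}$ is the symmetric $\{\alpha_{i},\alpha_{i+1}\}\in\tilde{E_{F}}$. The match hinges on the symmetrization in the disorder operation $\tilde{(\cdot)}$, equivalently on $F$ being undirected (a simple graph multifunction), which is the regime in which the statement is intended; once this is invoked, items (i)--(iv) are formal manipulations of the set-builder definitions, and I expect no further obstacle.
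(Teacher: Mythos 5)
Your proposal is correct and follows essentially the same route as the paper: the paper's proof is exactly the chain of set-builder rewritings replacing $\alpha_{i}\in F(\alpha_{i+1})$ by $\{\alpha_{i},\alpha_{i+1}\}\in\tilde{E_{F}}$ for (i), adding the endpoint constraints for (ii), and taking unions over $n$ for (iii) and (iv). You are in fact slightly more careful than the paper on the one substantive point, namely that the pointwise equivalence needs $F$ to be undirected (the lemma as stated says only ``multifunction,'' though the surrounding text makes clear the intended regime is simple graph multifunctions), so your explicit flagging of that hypothesis is a welcome addition rather than a deviation.
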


\begin{proof}
\begin{enumerate}[(i)]
\item$Walk(F,V,n)=\{\alpha\in V^{\star}_{n+1}\mid\forall i\in\{1,\ldots,n\}\colon\alpha_{i}\in F(\alpha_{i+1})\}=\{\alpha\in V^{\star}_{n+1}\mid\forall i\in\{1,\ldots,n\}\colon\{\alpha_{i},\alpha_{i+1}\}\in\tilde{E_{F}}\}=Walk((V,\tilde{E_{F}}),n)$.
\item$Walk_{u\to w}(F,V,n)=\{\alpha\in Walk(F,V,n)\mid\alpha_{1}=u\land\alpha_{n+1}=w\}=\{\alpha\in Walk((V,\tilde{E_{F}}),n)\mid\alpha_{1}=u\land\alpha_{n+1}=w\}=Walk_{u-w}((V,\tilde{E_{F}}),n)$.
\item$Walk(F,V\mid m)=\dot{\bigcup}_{n\in N}Walk(F,V,m\cdotp n)=$\newline$=\dot{\bigcup}_{n\in N}Walk((V,\tilde{E_{F}}),m\cdotp n)=Walk((V,\tilde{E_{F}})\mid m)$.
\item$Walk_{u\to w}(F,V\mid m)=\dot{\bigcup}_{n\in N}Walk_{u\to w}(F,V,m\cdotp n)=$\newline$=\dot{\bigcup}_{n\in N}Walk_{u-w}((V,\tilde{E_{F}}),m\cdotp n)=Walk_{u-w}((V,\tilde{E_{F}})\mid m)$.
\end{enumerate}
\end{proof}

\subsection{Bipartite graphs and independent sets}

We will use the concept of independent sets to define bipartite graphs and cliques.

\begin{definition}
Let $F\colon V\leadsto V$ be a multifunction and $U,W\in P(V)-\{\emptyset\}$. Then$\colon$
\begin{itemize}
\item$Ind(F,V)=\{U\in P(V)\mid U\cap F_{-}(U)=\emptyset\}\subset P(V)$ is the set of independent sets of $(F,V)$
\item$Clique(F,V)=Ind(F^{c},V)$ is the set of cliques of $(F,V)$
\item$F$ is $(U,W)-$bipartite iff $U,W\in Ind(F,V)$ and $V=U\dot{\cup}W$
\item$F$ is bipartite iff there exist $U,W\in P(V)-\{\emptyset\}$ such that $F$ is $(U,W)-$bipartite
\end{itemize}

\end{definition}
It is easily seen that for every multifunctions $F,G\colon V\leadsto V$ if $F\subset G$ then $Ind(G,V)\subset Ind(F,V)$ and if $F\subset G$ then $Clique(F,V)\subset Clique(G,V)$. 

We will need the following simple fact.
\begin{lemma}
Let $F\colon V\leadsto V$ be a multifunction and $U,W\in P(V)$. Then$\colon$
\begin{enumerate}[(i)]

\item$\exists(u,w)\in U\times W\colon u\in F(w)$ iff $W\cap F_{-}(U)\neq\emptyset$
\item$\forall(u,w)\in U\times W\colon u\in F(w)$ iff $W\cap(F^{c})_{-}(U)=\emptyset$
\item$U\in Clique(F,V)$ iff $\forall u,w\in U\colon u\in F(w)$
\item$U\in Ind(F,V)$ iff $\forall u,w\in U\colon u\notin F(w)$
\item$V\in Clique(F,V)$ iff $F=\text{const}^{V}$
\end{enumerate}

\end{lemma}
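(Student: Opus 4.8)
The plan is to derive all five items from Lemma 2.3(xiv), which says that $\exists(x,y)\in A\times B\colon y\in F(x)$ iff $A\cap F_{-}(B)\neq\emptyset$, together with the definitions $Ind(F,V)=\{U\mid U\cap F_{-}(U)=\emptyset\}$ and $Clique(F,V)=Ind(F^{c},V)$. Nothing here requires a fresh idea; each part is a rearrangement or specialization of part (i), so the whole proof is bookkeeping.

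First I would obtain (i) as a direct instance of Lemma 2.3(xiv) with $A=W$ and $B=U$. Reading the quantified pair as $(w,u)$, the membership $y\in F(x)$ becomes $u\in F(w)$, and $A\cap F_{-}(B)$ becomes $W\cap F_{-}(U)$; since we quantify over both coordinates the order in which the pair is written is irrelevant, so $\exists(u,w)\in U\times W\colon u\in F(w)$ iff $W\cap F_{-}(U)\neq\emptyset$. For (ii) I would apply (i) to the completion $F^{c}$ and negate: because $u\in F^{c}(w)$ is the negation of $u\in F(w)$, part (i) for $F^{c}$ reads $\exists(u,w)\in U\times W\colon u\notin F(w)$ iff $W\cap(F^{c})_{-}(U)\neq\emptyset$, and taking the contrapositive of both sides gives $\forall(u,w)\in U\times W\colon u\in F(w)$ iff $W\cap(F^{c})_{-}(U)=\emptyset$.

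Items (iii) and (iv) are then the specializations $W=U$. Unfolding the definition, $U\in Ind(F,V)$ iff $U\cap F_{-}(U)=\emptyset$, which by (i) with $W=U$ (negated) holds iff $\forall u,w\in U\colon u\notin F(w)$, giving (iv). Similarly $U\in Clique(F,V)=Ind(F^{c},V)$ iff $U\cap(F^{c})_{-}(U)=\emptyset$, which by (ii) with $W=U$ holds iff $\forall u,w\in U\colon u\in F(w)$, giving (iii). Finally, for (v) I would take $U=V$ in (iii): $V\in Clique(F,V)$ iff $\forall u,w\in V\colon u\in F(w)$, and since $F(w)\subset V$ always holds, the right-hand side is equivalent to $F(w)=V$ for every $w$, i.e. $F=\text{const}^{V}$.

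I do not expect a genuine obstacle; the only point demanding care is the two negation steps, where one must distinguish the completion $F^{c}$ (the set-theoretic complement of the value $F(w)$, handled inside Lemma 2.3(xiv)) from the logical negation of the outer quantifier. Keeping these separate — and keeping straight which coordinate of the pair is the argument $w$ and which is the image element $u$ — is all that the argument needs.
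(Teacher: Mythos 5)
Your proposal is correct and follows essentially the same route as the paper: part (i) is the specialization of Lemma~2.3(xiv) with $A=W$, $B=U$, part (ii) is its negation via the completion $F^{c}$, and (iii)--(v) unfold the definitions $Ind(F,V)=\{U\mid U\cap F_{-}(U)=\emptyset\}$ and $Clique(F,V)=Ind(F^{c},V)$ exactly as the paper does. The only cosmetic difference is that in (iv) you unfold $Ind(F,V)$ directly instead of passing through $Clique(F^{c},V)$, which changes nothing.
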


\begin{proof}
\begin{enumerate}[(i)]
\item$\exists(u,w)\in V\times W\colon u\in F(w)$ iff $F_{-}(U)\cap W\neq\emptyset$ because $U,W\subset V$.
\item$\forall (u,w)\in U\times W\colon u\in F(w)$ iff not $\exists(u,w)\in U\times W\colon u\notin F(w)$; or equivalently, not $\exists(u,w)\in U\times W\colon u\in F^{c}(w)$ i.e. $W\cap(F^{c})_{-}(U)=\emptyset$.
\item$U\in Clique(F,V)=Ind(F^{c},V)$ iff $U\cap(F^{c})_{-}(U)=\emptyset$; or equivalently, $\forall u,w\in U\colon u\in F(w)$.
\item$U\in Ind(F,V)=Clique(F^{c},V)$ iff $\forall u,w\in V\colon u\in F^{c}(w)$ i.e. $\forall u,w\in V\colon u\notin F(w)$.
\item$V\in Clique(F,V)$ iff $\forall u,w\in V\colon u\in F(w)$; or equivalently, $\forall w\in V\colon F(w)=V$ i.e. $F=\text{const}^{V}$. 
\end{enumerate}
\end{proof}
\newpage
We verify these definitions using simple graphs.
\begin{definition}\cite{bollobas,diestel,jung}
Let $G=(V,E)$ be a simple graph and $U,W\in P(V)-\{\emptyset\}$. Then$\colon$
\begin{itemize}
\item$Ind(G)=\{U\in P(V)\mid\forall u_{1},u_{2}\in U\colon\{u_{1},u_{2}\}\notin E\}$
\item$Clique(G)=\{U\in P(V)\mid\forall u_{1},u_{2}\in U\colon\{u_{1},u_{2}\}\in E\}$
\item$G$ is $(U,W)-$bipartite iff $U,W\in Ind(G)$ and $V=U\dot{\cup}W$
\item$G$ is bipartite iff there exist $U,W\in P(V)-\{\emptyset\}$ such that $G$ is $(U,W)-$bipartite
\end{itemize}
\end{definition}

\begin{lemma}
Let $F\colon V\leadsto V$ be a simple graph multifunction. Then$\colon$
\begin{itemize}
\item$Ind(F,V)=Ind((V,\tilde{E_{F}}))$
\item$Clique(F,V)=Clique((V,\tilde{E_{F}}))$
\item$F$ is bipartite iff $\tilde{G}=(V,\tilde{E_{F}})$ is bipartite

\end{itemize}

\end{lemma}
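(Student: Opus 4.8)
The plan is to reduce all three claims to a single pointwise bridge between the multifunction $F$ and the edge set $\tilde{E_{F}}$: for every $u,w\in V$,
\[
u\in F(w)\iff\{u,w\}\in\tilde{E_{F}}.
\]
First I would establish this equivalence. For $u\neq w$ it is exactly the description $\tilde{E_{F}}=\{\{u,v\}\in P_{2}(V)\mid u\in F(v)\}$ supplied by the correspondence lemma for simple graph multifunctions, with undirectedness $F=F^{-1}$ guaranteeing that the unordered pair is well-defined (i.e. $u\in F(w)\Leftrightarrow w\in F(u)$, so membership does not depend on which representative is named). For $u=w$ both sides are false simultaneously: looplessness gives $u\notin F(u)$, while $\{u,u\}=\{u\}\in P_{1}(V)$ cannot lie in $\tilde{E_{F}}\subset P_{2}(V)$. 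Hence the bridge holds for all pairs, diagonal included.

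Next I would dispatch the first two bullets by combining this bridge with the characterisations $U\in Ind(F,V)\iff\forall u,w\in U\colon u\notin F(w)$ and $U\in Clique(F,V)\iff\forall u,w\in U\colon u\in F(w)$ from the preceding lemma. Unwinding the graph-theoretic definitions, $U\in Ind((V,\tilde{E_{F}}))$ means $\forall u,w\in U\colon\{u,w\}\notin\tilde{E_{F}}$ and $U\in Clique((V,\tilde{E_{F}}))$ means $\forall u,w\in U\colon\{u,w\}\in\tilde{E_{F}}$. Applying the bridge termwise inside the quantifier converts each multifunction condition into the corresponding graph condition, giving $Ind(F,V)=Ind((V,\tilde{E_{F}}))$ and $Clique(F,V)=Clique((V,\tilde{E_{F}}))$.

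Finally, the third bullet should follow immediately from the equality of independent-set families. Both notions of $(U,W)$-bipartiteness are phrased identically, namely $U,W\in Ind(\,\cdot\,)$ together with the same partition requirement $V=U\dot{\cup}W$, so once $Ind(F,V)=Ind((V,\tilde{E_{F}}))$ is known, $F$ is $(U,W)$-bipartite exactly when $(V,\tilde{E_{F}})$ is; quantifying over $U,W\in P(V)-\{\emptyset\}$ then yields the stated equivalence of bipartiteness.

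I expect the only genuine subtlety to be the diagonal case $u=w$ in the quantifier $\forall u,w\in U$: the multifunction definitions range over all pairs, whereas a simple graph carries no loops. The match is precisely the content of looplessness ($u\notin F(u)$) aligning with the constraint $\tilde{E_{F}}\subset P_{2}(V)$, so on both sides coincident arguments impose no condition at all. Verifying this coherence is what makes the bridge, and hence all three equalities, go through, the rest being a direct transport of quantified statements.
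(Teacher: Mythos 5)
Your proposal is correct and follows essentially the same route as the paper: the paper's proof also invokes the characterisations $U\in Ind(F,V)\Leftrightarrow\forall u,w\in U\colon u\notin F(w)$ and $U\in Clique(F,V)\Leftrightarrow\forall u,w\in U\colon u\in F(w)$, translates these termwise via $u\in F(w)\Leftrightarrow\{u,w\}\in\tilde{E_{F}}$, and derives bipartiteness from the equality of the independent-set families. Your explicit check of the diagonal case $u=w$ (looplessness versus $\tilde{E_{F}}\subset P_{2}(V)$) is a detail the paper leaves implicit, but it is the same argument.
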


\begin{proof}
Fix $U\subset V$. 
\begin{enumerate}[(i)]
\item$U\in Ind(F,V)$ iff $\forall u_{1},u_{2}\in U\colon u_{1}\notin F(u_{2})$; or equivalently, $\forall u_{1},u_{2}\in U\colon\{u_{1},u_{2}\}\notin\tilde{E_{F}}$ i.e. $U\in Ind((V,\tilde{E_{F}}))$.
\item$U\in Clique(F,V)$ iff $\forall u_{1},u_{2}\in U\colon u_{1}\in F(u_{2})$; or equivalently, $\forall u_{1},u_{2}\in U\colon\{u_{1},u_{2}\}\in\tilde{E_{F}}$ i.e. $U\in Clique((V,\tilde{E_{F}}))$. 
\item$F$ is bipartite iff there exist $U,W\in P(V)-\{\emptyset\}$ such that $U,W\in Ind(F,V)=Ind((V,\tilde{E_{F}}))$ and $V=U\dot{\cup}W$ i.e. $\tilde{G_{F}}=(V,\tilde{E_{F}})$ is bipartite.

\end{enumerate}
\end{proof}

In the next part we will consider the graphs without isolated vertices. On this assumption, we have the following characterization of bipartite graphs in language of multifunctions.
\begin{lemma}
Let $F\colon V\leadsto V$ be a strict multifunction and $U,W\in P(V)-\{\emptyset\}$. Then $F$ is $(U,W)-$bipartite iff $V=U\dot{\cup}W$ and $F_{-}(U)=W$ and $F_{-}(W)=U$.
\end{lemma}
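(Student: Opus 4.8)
The plan is to prove the two implications separately, and the key observation I would flag at the outset is that the hypothesis of strictness is needed only for the forward direction; the converse is purely set-theoretic.

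For the forward direction, suppose $F$ is $(U,W)$-bipartite. The condition $V=U\dot{\cup}W$ is built into the definition, so it remains only to establish the two preimage equalities. Since the statement is symmetric under interchanging $U$ and $W$, it suffices to prove $F_{-}(U)=W$ and then swap. First I would get the easy inclusion $F_{-}(U)\subset W$: because $U\in Ind(F,V)$ we have $U\cap F_{-}(U)=\emptyset$, and since every element of $F_{-}(U)$ lies in $V=U\dot{\cup}W$, it must lie in $W$. The hard part will be the reverse inclusion $W\subset F_{-}(U)$, and this is exactly where strictness is indispensable. Fix $w\in W$. From $W\in Ind(F,V)$ we get $w\notin F_{-}(W)$, i.e. $F(w)\cap W=\emptyset$, so $F(w)\subset U$ by the disjoint decomposition $V=U\dot{\cup}W$. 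Strictness guarantees $F(w)\neq\emptyset$, hence $F(w)\cap U\neq\emptyset$, which is precisely $w\in F_{-}(U)$. Combining the inclusions gives $F_{-}(U)=W$, and interchanging the roles of $U$ and $W$ yields $F_{-}(W)=U$.

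For the backward direction, suppose $V=U\dot{\cup}W$, $F_{-}(U)=W$ and $F_{-}(W)=U$. Since the union is disjoint, $U\cap F_{-}(U)=U\cap W=\emptyset$ and $W\cap F_{-}(W)=W\cap U=\emptyset$, so $U,W\in Ind(F,V)$ directly from the definition of independent sets. Together with $V=U\dot{\cup}W$ this is exactly the definition of $F$ being $(U,W)$-bipartite. No appeal to strictness is made here, which matches the remark preceding the lemma that strictness (``no isolated vertices'') is the extra ingredient turning the inclusions into equalities.

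The single genuine obstacle is the step $W\subset F_{-}(U)$: dropping strictness would allow some $w\in W$ with $F(w)=\emptyset$, giving $w\notin F_{-}(U)$ and leaving only the proper inclusion $F_{-}(U)\subsetneq W$. Everything else is bookkeeping with the decomposition $V=U\dot{\cup}W$ and the unwinding of $U\cap F_{-}(U)=\emptyset$ via the definition of the complete preimage $F_{-}$.
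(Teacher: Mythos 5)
Your proof is correct and follows essentially the same route as the paper's: the easy inclusions $F_{-}(U)\subset W$ and $F_{-}(W)\subset U$ come from independence plus the decomposition $V=U\dot{\cup}W$, and the reverse inclusion uses strictness to show each $w\in W$ has $F(w)\subset U$ nonempty, hence $w\in F_{-}(U)$. The only cosmetic difference is that the paper phrases this last step as a contradiction via the inclusion $F_{+}(W)\subset F_{-}(W)$ for strict $F$, while you argue it directly; the converse direction is identical in both.
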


\begin{proof}
Assume that $F$ is bipartite. Then $V=U\dot{\cup}W$ and $F_{-}(U)\cap U=\emptyset=F_{-}(W)\cap W$. Therefore $V=U\dot{\cup}W$ and $F_{-}(U)\subset U^{c}=W$ and $F_{-}(W)\subset W^{c}=U$. Assume that $W\not\subset F_{-}(U)$ i.e. $W\cap F_{-}(U)^{c}\neq\emptyset$. Then there exists $v\in V$ such that $v\in W\cap F_{-}(U)^{c}$; or equivalently, there exists $v\in V$ such that $v\in W$ and $F(v)\cap U=\emptyset$. We may say equivalently that there exists $v\in W$ and $F(v)\subset U^{c}=W$ i.e. $\emptyset\neq W\cap F_{+}(W)\subset W\cap F_{-}(W)$, assuming that $F$ is strict. Thus $W\cap F_{-}(W)\neq\emptyset$, contradiction. Hence $W\subset F_{-}(U)\subset W$ i.e. $F_{-}(U)=W$ and for the same reason $F_{-}(W)=U$.

Assume that $V=U\dot{\cup}W,F_{-}(U)=W$ and $F_{-}(W)=U$. Then $F_{-}(U)\subset W=U^{c}$ and $F_{-}(W)\subset U=W^{c}$; or equivalently, $F_{-}(U)\cap U=\emptyset$ and $F_{-}(W)\cap W=\emptyset$ i.e. $U,W\in Ind(F,V)$. Then, by definition, $F$\newline is $(U,W)-$bipartite.
\end{proof}

Here is one of the mostly used characterizations of bipartite graphs.
\begin{theorem}\cite{bollobas}[K\"onig 1916]
The simple graph $G=(V,E)$ is bipartite iff $G=(V,E)$ has no cycle of odd length.  
\end{theorem}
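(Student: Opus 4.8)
The plan is to prove the two implications separately: the forward direction (bipartite $\Rightarrow$ no odd cycle) by a parity/colouring argument along walks, and the reverse direction (no odd cycle $\Rightarrow$ bipartite) by building a bipartition from the parities of distances to a fixed base vertex. The reverse direction carries essentially all of the content.

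For the forward direction, suppose $V = U \dot{\cup} W$ with $U, W$ independent. Colour each vertex by the part it lies in. Since neither $U$ nor $W$ contains an edge, every edge of $G$ joins vertices of opposite colour. Hence along any walk the colour flips at each step, so for a walk of length $n$ the first and last vertices share a colour precisely when $n$ is even. A cycle is a closed walk, i.e. its first and last vertices coincide and hence share a colour, which forces its length to be even. Thus $G$ has no odd cycle.

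For the reverse direction, I would first reduce to the connected case: a graph is bipartite iff each connected component is (take the disjoint unions of the parts of the components), and no component contains an odd cycle since $G$ does not; I may therefore assume $G$ is connected and has at least one edge. Fix a base vertex $r$ and let $d(v)$ be the length of a shortest walk from $r$ to $v$, which exists by connectedness. Put $U = \{v \in V : d(v) \text{ is even}\}$ and $W = \{v \in V : d(v) \text{ is odd}\}$, so that $V = U \dot{\cup} W$. The claim is that $U$ and $W$ are independent. Suppose instead that some edge $\{x,y\}$ has both endpoints in the same part; then $d(x)$ and $d(y)$ have the same parity. Concatenating a shortest $r \to x$ walk, the single edge $x \to y$, and the reversal of a shortest $r \to y$ walk produces a closed walk at $r$ of length $d(x) + 1 + d(y)$, which is odd; here the concatenation and reversal of walks are exactly the operations established in the earlier lemma on walks.

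The crux is then to extract a genuine odd cycle from this odd closed walk. This I would do by the standard reduction, proved by strong induction on the length $\ell$ of a closed walk: if the walk has no repeated vertex other than its coinciding endpoints then it already is a cycle, of odd length $\ell$; otherwise a repeated vertex splits it into two strictly shorter closed walks whose lengths sum to $\ell$, and since $\ell$ is odd one of the two pieces is an odd closed walk, to which the induction hypothesis applies. Applying this to our walk of length $d(x)+d(y)+1$ yields an odd cycle, contradicting the hypothesis. Hence $U$ and $W$ are independent, $V = U \dot{\cup} W$, and $G$ is bipartite. The main obstacle is precisely this passage from an odd closed walk to an odd cycle: the concatenation and reversal steps are routine given the earlier walk calculus, but isolating a repetition-free cycle requires the splitting-at-a-repeated-vertex induction, which is where the real argument sits.
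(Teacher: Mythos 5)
Your proof is correct and is the standard textbook argument; note, however, that the paper itself offers no proof of this statement at all --- it is quoted from the literature (the citation to the bipartite-graphs monograph) and immediately reformulated, so there is no in-paper proof to match step for step. The closest internal analogue is Theorem 5.5, the K\"onig theorem for multifunctions, whose proof runs the same construction as your reverse direction: fix a base vertex $v$, take $U=F^{\infty\cup}_{\mid 2}(v)$ (the vertices reachable by a walk of even length) and $W$ its complement inside the reachable set, and derive a contradiction from an edge inside one part by concatenating walks to produce an odd closed walk at a single vertex. The one genuine difference is that the paper's hypothesis there is ``$v\notin F^{(2\cdot n+1)\cup}(v)$ for all $n$ and $v$'', i.e.\ \emph{no odd closed walk}, rather than \emph{no odd cycle}; with that phrasing the contradiction is immediate and the paper never needs the step you correctly single out as the crux, namely extracting a repetition-free odd cycle from an odd closed walk by splitting at a repeated vertex and inducting on length. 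Your version proves the stronger classical statement and your splitting induction is sound (the two pieces have lengths summing to $\ell$, both strictly shorter, and one is odd); the only cosmetic point worth a sentence in a final write-up is the degenerate base case, which cannot occur in a simple graph since a closed walk of length $1$ would be a loop, and the paper's bipartiteness definition requires both parts nonempty, so the edgeless one-vertex component deserves a remark in your reduction to the connected case.
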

We may say equivalently that a simple graph $G=(V,E)$ is not bipartite iff $G=(V,E)$ has a cycle of odd length. Furthermore, in proof of this theorem, we do not use the finiteness of graph $G$. We will prove the K\"onig theorem for multifunctions that says that if $F\colon V\leadsto V$ $F$ is a connected simple graph multifunction then $F$ is bipartite iff for every $n\in N$ the multifunction $F^{(2\cdotp n+1)\cup}$ is loopless.

We will prove another characterization of bipartite graphs that says that if $F\colon V\leadsto V$ is a connected simple graph multifunction then $F$ is bipartite iff $F^{2\cup}$ is disconnected. Furthermore $F$ is bipartite iff for every $n\in N$ the multifunction $F^{(2\cdotp n)\cup}$ is disconnected. 

Now we are introducing our main tool.

\section{Iterations of multifunctions}

We have iterations of multifunctions in similar way to dynamical systems. We use only iterations of $F_{\cup}$ and $F_{-}$. The reason for this is the fact that $F_{\cap}$ and $F_{+}$ are expressed by $F_{\cup}$ and $F_{-}$.

\begin{definition}\cite{bergetop}
Let $F\colon V\leadsto V$ be a multifunction and $n,m\in Z$. Then$\colon$
\begin{itemize}
\item$F_{\cup}^{n}\colon P(V)\to P(V);A\mapsto F_{\cup}^{n}(A)=\begin{cases}A$ iff $n=0\\F_{\cup}(F_{\cup}^{n-1}(A))$ iff $n>0\\(F^{-1})_{\cup}^{-n}(A)$ iff $n<0\end{cases}$
\item$F_{-}^{n}\colon P(V)\to P(V);A\mapsto F_{-}^{n}(A)=\begin{cases}A$ iff $n=0\\F_{-}(F_{-}^{n-1}(A))$ iff $n>0\\(F^{-1})_{-}^{-n}(A)$ iff $n<0\end{cases}$
\item$F^{n\cup}\colon V\to P(V);v\mapsto F^{n\cup}(v)=\begin{cases}\{v\}$ iff $n=0\\F_{\cup}(F^{(n-1)\cup}(v))$ iff $n>0\\(F^{-1})^{(-n)\cup}(v)$ iff $n<0\end{cases}$
\item$F^{n-}\colon V\to P(V);v\mapsto F^{n-}(v)=\begin{cases}\{v\}$ iff $n=0\\F_{-}(F^{(n-1)-}(v))$ iff $n>0\\(F^{-1})^{(-n)-}(v)$ iff $n<0\end{cases}$
\item$F^{+\infty\cup}_{\mid m}=\bigcup_{n\in N}F^{(m\cdotp n)\cup}$
\item$F^{-\infty\cup}_{\mid m}=\bigcup_{n\in-N}F^{(m\cdotp n)\cup}$
\item$F^{\infty\cup}_{\mid m}=F^{+\infty\cup}_{\mid m}\cup F^{-\infty\cup}_{\mid m}$
\item$F^{+\infty-}_{\mid m}=\bigcup_{n\in N}F^{(m\cdotp n)-}$
\item$F^{-\infty-}_{\mid m}=\bigcup_{n\in-N}F^{(m\cdotp n)-}$
\item$F^{\infty-}_{\mid m}=F^{+\infty-}_{\mid m}\cup F^{-\infty-}_{\mid m}$
\end{itemize}

\end{definition}

Inclusion of multifunctions is preserved by iterations.
\begin{lemma}
If $F,G\colon V\leadsto V$ are multifunctions such that $F\subset G$ then $F^{n\cup}\subset G^{n\cup}$ and $F^{+\infty\cup}_{\mid n}\subset G^{+\infty\cup}_{\mid n}$ for each $n\in N$.
\end{lemma}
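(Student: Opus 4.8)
The plan is to establish the finite inclusion $F^{n\cup}\subset G^{n\cup}$ first, by induction on $n$, and then to read off the infinite version by taking a union over the arithmetic progression $n\cdot k$, $k\in N$. Throughout I only need the non-negative branch of the iteration definition, so the clause for negative exponents plays no role here.

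For the base case I would take $n=0$ and simply observe that $F^{0\cup}(v)=\{v\}=G^{0\cup}(v)$ for every $v\in V$, so the two multifunctions coincide and the inclusion holds trivially. For the inductive step I want to pass from $F^{n\cup}\subset G^{n\cup}$ to $F^{(n+1)\cup}\subset G^{(n+1)\cup}$. Fixing $v\in V$, I unfold the definition to $F^{(n+1)\cup}(v)=F_{\cup}(F^{n\cup}(v))$ and $G^{(n+1)\cup}(v)=G_{\cup}(G^{n\cup}(v))$, and then apply two separate monotonicity facts. The first is monotonicity of $F_{\cup}$ in its set argument: since $F_{\cup}(A)=\bigcup_{a\in A}F(a)$, enlarging $A$ enlarges the union, so $A\subset B$ forces $F_{\cup}(A)\subset F_{\cup}(B)$; applied to the inductive hypothesis $F^{n\cup}(v)\subset G^{n\cup}(v)$ this gives $F_{\cup}(F^{n\cup}(v))\subset F_{\cup}(G^{n\cup}(v))$. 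The second is the fact $F\subset G\Rightarrow F_{\cup}\subset G_{\cup}$, noted immediately after the definition of $\cup$-images, which yields $F_{\cup}(G^{n\cup}(v))\subset G_{\cup}(G^{n\cup}(v))$. Chaining the two inclusions closes the induction.

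The infinite case then follows formally. By definition $F^{+\infty\cup}_{\mid n}=\bigcup_{k\in N}F^{(n\cdot k)\cup}$, and applying the finite part already proved, with the exponent $n\cdot k$ in place of the generic one, gives $F^{(n\cdot k)\cup}\subset G^{(n\cdot k)\cup}$ for every $k\in N$. Since taking unions preserves inclusion, evaluating at an arbitrary $v\in V$ and unioning over $k$ yields $F^{+\infty\cup}_{\mid n}(v)\subset G^{+\infty\cup}_{\mid n}(v)$, which is the desired inclusion.

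There is no genuine obstacle in this argument; it is a routine induction resting on Definition of $F^{n\cup}$. The one point that deserves care is not to conflate the two distinct monotonicity properties that drive the inductive step, namely monotonicity of the map $A\mapsto F_{\cup}(A)$ in its argument versus monotonicity of $F\mapsto F_{\cup}$ in the multifunction, and to apply them in the correct order so that exactly one of $F,G$ is replaced at each stage.
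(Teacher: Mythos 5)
Your proof is correct and follows essentially the same route as the paper: induction on $n$ using monotonicity of $A\mapsto F_{\cup}(A)$ together with $F\subset G\Rightarrow F_{\cup}\subset G_{\cup}$ in the inductive step, then a union over the arithmetic progression for the infinite case. The only cosmetic differences are that you start the induction at $n=0$ instead of $n=1$ and that you correctly write the second step of the chain as an inclusion $F_{\cup}(G^{n\cup}(v))\subset G_{\cup}(G^{n\cup}(v))$, where the paper's displayed inductive step has an equality sign that is evidently a typo.
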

\begin{proof}
First inclusion we prove inductively. Fix $v\in V$. Assume that $F\subset G$. For $n=1$ we calculate that $F^{1\cup}=F\subset G=G^{1\cup}$. For $n=2$ we calculate that $F^{2\cup}(v)=F_{\cup}(F(v))\subset F_{\cup}(G(v))\subset G_{\cup}(G(v))=G^{2\cup}(v)$. Assume that $F^{n\cup}(v)\subset G^{n\cup}(v)$. Then $F^{(n+1)\cup}(v)=F_{\cup}(F^{n\cup}(v))\subset F_{\cup}(G^{n\cup}(v))=G_{\cup}(G^{n\cup}(v))=G^{(n+1)\cup}(v)$.

Therefore $F^{+\infty\cup}_{\mid n}=\bigcup_{k\in N}F^{(k\cdotp n)\cup}\subset\bigcup_{k\in N}G^{(k\cdotp n)\cup}=G^{+\infty\cup}_{\mid n}$.
\end{proof}

The graphs we study here do not have isolated vertices. This property is preserved by iteration.
\begin{lemma}
Let $F\colon V\leadsto V$ be a multifunction. Then$\colon$
\begin{enumerate}[(i)]
\item if $F$ is not strict then $\forall n\in N\colon F^{n\cup}$ is not strict
\item if $F$ is strict then $\forall n\in N\colon F^{n\cup}$ is strict
\item$F$ is strict iff $\forall n\in N\colon F^{n\cup}$ is strict
\item$F$ is strict iff $\exists n\in N\colon F^{n\cup}$ is strict
\item$F$ is strict iff $F^{+\infty\cup}_{\mid 1}$ is strict
 
\end{enumerate}
\end{lemma}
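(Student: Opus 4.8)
The plan is to establish (ii) and (i) directly by induction on $n$ and then to read off (iii), (iv) and (v) as purely formal consequences. Throughout I treat $N$ as the positive integers $N-\{0\}$: for $n=0$ one has $F^{0\cup}=\{\cdotp\}$, which is always strict, so the intended range in (i) must exclude $0$. The only genuine computations are the two induction steps; everything else is logical bookkeeping.

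For (ii), I would argue by induction. The base case $n=1$ is immediate because $F^{1\cup}(v)=F_{\cup}(\{v\})=F(v)$ by Lemma 2.3(xxiv), and $F$ is strict by hypothesis. For the inductive step, observe that $F^{(n+1)\cup}=F_{\cup}\circ F^{n\cup}$, so $F^{(n+1)\cup}(v)=\bigcup_{u\in F^{n\cup}(v)}F(u)$ for each $v\in V$. By the inductive hypothesis $F^{n\cup}(v)\neq\emptyset$, and strictness of $F$ gives $F(u)\neq\emptyset$ for every $u$ in this nonempty index set; hence the union is nonempty and $F^{(n+1)\cup}$ is strict.

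For (i), I would run the dual induction using $F_{\cup}(\emptyset)=\emptyset$ from Lemma 2.3(xvi). The base case is again $F^{1\cup}=F$, which is not strict by hypothesis, so there is a $v$ with $F^{1\cup}(v)=\emptyset$. The step is even simpler: if $F^{n\cup}(v)=\emptyset$ for some $v$, then $F^{(n+1)\cup}(v)=F_{\cup}(F^{n\cup}(v))=F_{\cup}(\emptyset)=\emptyset$, so the same vertex witnesses non-strictness at level $n+1$. Thus non-strictness propagates through all $n\in N$.

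The remaining items are now formal. Statement (iii) is the conjunction of (ii) (forward direction) with the trivial observation that specializing $\forall n$ to $n=1$ recovers strictness of $F=F^{1\cup}$. Statement (iv) follows because (ii) supplies an $n$ (e.g. $n=1$) in the forward direction, while its reverse direction is exactly the contrapositive of (i): if $F$ were not strict then no $F^{n\cup}$ could be strict. Finally, for (v) I would apply Lemma 2.4(ii) to the family $(F^{n\cup})_{n\in N}$, giving that $F^{+\infty\cup}_{\mid 1}=\bigcup_{n\in N}F^{n\cup}$ is strict iff some $F^{n\cup}$ is strict; combining this with (iv) yields strictness of $F$. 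I expect no real obstacle here: the only point requiring care is the $n=0$ convention noted above, together with keeping the two inductions (for strict and for non-strict $F$) cleanly separated so that (iv) can be obtained by contraposition rather than re-proved.
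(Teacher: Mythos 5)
Your proposal is correct and follows essentially the same route as the paper: two inductions (using $F_{\cup}(\emptyset)=\emptyset$ for the non-strict case and the union of nonempty sets over a nonempty index set for the strict case), followed by formal deductions for (iii) and (iv) via the contrapositive of (i), and an appeal to Lemma 2.4(ii) for (v), which the paper invokes implicitly with its "or equivalently". Your explicit remark about excluding $n=0$ (since $F^{0\cup}=\{\cdotp\}$ is always strict) is a point the paper glosses over by starting its inductions at $F^{2\cup}$, but it does not change the substance of the argument.
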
 

\begin{proof}
 
\begin{enumerate}[(i)]
\item Assume that $F$ is not strict. Then there exists $v\in V$ such that $F(v)=\emptyset$. Then $F^{2\cup}(v)=F_{\cup}(F^{1\cup}(v))=F_{\cup}(F(v))=F_{\cup}(\emptyset)=\emptyset$. Assume inductively that $F^{n\cup}(v)=\emptyset$ is true. Then $F^{(n+1)\cup}(v)=F_{\cup}(F^{n\cup}(v))=F_{\cup}(\emptyset)=\emptyset$. Hence $F^{n\cup}$ is not strict for any $n\in N$.
\item Assume that $F$ is strict. Fix $v\in V$. Then $F(v)\neq\emptyset$ and $F^{2\cup}(v)=F_{\cup}(F^{1\cup}(v))=F_{\cup}(F(v))\neq\emptyset$. Assume inductively that $F^{n\cup}(v)\neq\emptyset$ is true. Then $F^{(n+1)\cup}(v)=F_{\cup}(F^{n\cup}(v))\neq\emptyset$. Hence $F^{n\cup}$ is strict for any $n\in N$.  
\item if $\exists n\in N\colon F^{n\cup}$ is strict then $F$ is strict. But if $F$ is strict then $\forall n\in N\colon F^{n\cup}$ is strict. Notice that if $\forall n\in N\colon F^{n\cup}$ is strict then $\exists n\in N\colon F^{n\cup}$ is strict. Therefore $F$ is strict iff $\forall n\in N\colon F^{n\cup}$ is strict.
\item From the above $F$ is strict iff $\exists n\in N\colon F^{n\cup}$ is strict.
\item$F$ is strict iff $\exists n\in N\colon F^{n\cup}$ is strict; or equivalently, $F^{+\infty\cup}_{\mid 1}=\bigcup_{n\in N}F^{n\cup}$ is strict.
  
\end{enumerate}

\end{proof}

We explain how these iterations are related together.
\begin{lemma}
Let $F\colon V\leadsto V$ be a multifunction and $n\in N$. Then$\colon$

\begin{enumerate}[(i)]
\item$F^{n\cup}=F^{n}_{\cup}\circ\{\cdot\}$
\item$F^{n-}=F^{n}_{-}\circ\{\cdot\}$
\item$F^{n}_{-}=(F^{-1})^{n}_{\cup}$
\item$F^{n-}=(F^{-1})^{n\cup}$
\end{enumerate}

\end{lemma}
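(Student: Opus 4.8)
The plan is to prove all four identities by a short induction on $n$, handling (i) and (ii) first, then (iii), and finally deducing (iv) by chaining the previous three. The one conceptual point I would keep straight throughout is the distinction between the set operators $F^{n}_{\cup},F^{n}_{-}\colon P(V)\to P(V)$, which iterate on subsets, and the vertex multifunctions $F^{n\cup},F^{n-}\colon V\leadsto V$, which iterate starting from a point; the singleton multifunction $\{\cdot\}$ is precisely the bridge between the two, so each identity is really a statement that ``iterate, then apply to a point'' agrees with ``apply to the singleton, then iterate.''

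For (i) I would fix $v\in V$ and induct on $n$. The base case $n=0$ is immediate, since $F^{0\cup}(v)=\{v\}=F^{0}_{\cup}(\{v\})$ because the clause $n=0$ makes $F^{0}_{\cup}$ the identity on $P(V)$. For the inductive step, assuming $F^{n\cup}(v)=F^{n}_{\cup}(\{v\})$, I would compute
\[
F^{(n+1)\cup}(v)=F_{\cup}(F^{n\cup}(v))=F_{\cup}(F^{n}_{\cup}(\{v\}))=F^{n+1}_{\cup}(\{v\}),
\]
where the outer equalities are just the recursive clauses in the definitions of $F^{(n+1)\cup}$ and $F^{n+1}_{\cup}$, and the middle one is the hypothesis. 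Part (ii) is the identical induction with $\cup$ replaced by $-$ everywhere; nothing changes because the two recursions have the same shape.

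For (iii) the key input is the earlier identity $F_{-}=(F^{-1})_{\cup}$ (item (iii) of Lemma~2.3). I would induct on $n$, now viewing both sides as operators on $P(V)$. The base case $n=0$ holds since $F^{0}_{-}$ and $(F^{-1})^{0}_{\cup}$ are both the identity. For the step, assuming $F^{n}_{-}=(F^{-1})^{n}_{\cup}$, I would write, for $A\subset V$,
\[
F^{n+1}_{-}(A)=F_{-}(F^{n}_{-}(A))=(F^{-1})_{\cup}\bigl((F^{-1})^{n}_{\cup}(A)\bigr)=(F^{-1})^{n+1}_{\cup}(A),
\]
the middle equality using both the inductive hypothesis and $F_{-}=(F^{-1})_{\cup}$. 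Finally, (iv) requires no new induction: applying (ii), then (iii), then (i) to the multifunction $F^{-1}$ gives
\[
F^{n-}=F^{n}_{-}\circ\{\cdot\}=(F^{-1})^{n}_{\cup}\circ\{\cdot\}=(F^{-1})^{n\cup}.
\]

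I do not expect a genuine obstacle here: every step is a one-line unwinding of a recursive definition, and the only nontrivial ingredient is the already-established fact $F_{-}=(F^{-1})_{\cup}$. The sole thing to watch is the bookkeeping—at each stage making sure I am iterating the correct object (a set operator versus a pointwise multifunction) and inserting $\{\cdot\}$ on the correct side—so that the reduction of (iv) to (i)--(iii) is valid.
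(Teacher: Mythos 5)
Your proof is correct and follows essentially the same route as the paper: induction on $n$ for (i)--(iii) using the recursive definitions and the identity $F_{-}=(F^{-1})_{\cup}$ from Lemma~2.3, then deducing (iv) by chaining (ii), (iii), and (i). The only cosmetic difference is that the paper also writes out the $n=1$ case explicitly, which adds nothing essential.
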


\begin{proof}
Almost all equalities listed here we will prove inductively. Fix $w\in V,A\in P(V),n\in N$. We write the first inductive step for all three cases. For $n=0$ we calculate $F^{0\cup}(w)=\{w\}=F^{0}_{\cup}(\{w\})=(F^{0}_{\cup}\circ\{\cdot\})(w)$ and $F^{0-}(w)=\{w\}=F^{0}_{-}(\{w\})=(F^{0}_{-}\circ\{\cdot\})(w)$ and $F^{0}_{-}(A)=A=(F^{-1})^{0}_{-}$. For $n=1$ we calculate $F^{1\cup}(w)=F_{\cup}(F^{0\cup}(w))=F_{\cup}(\{w\})=(F_{\cup}\circ\{\cdot\})(w)=F(w)=(F_{\cup}\circ\{\cdot\})(w)=F_{\cup}(\{w\})=F_{\cup}(F_{\cup}^{0}(\{w\}))=F^{1}_{\cup}(\{w\})=(F^{1}_{\cup}\circ\{\cdot\})(w)$
and $F^{1-}(w)=F_{-}(F^{0-}(w))=F_{-}(\{w\})=(F_{-}\circ\{\cdot\})(w)=F^{-1}(w)=(F_{-}\circ\{\cdot\})(w)=F_{-}(\{w\})=F_{-}(F_{-}^{0}(\{w\}))=F^{1}_{-}(\{w\})=(F^{1}_{-}\circ\{\cdot\})(w)$
and $F^{1}_{-}(A)=F_{-}(F^{0}_{-}(A))=F_{-}(A)=F^{-1}_{\cup}(A)=F^{-1}_{\cup}((F^{-1})^{0}_{\cup}(A))=(F^{-1})^{1}_{\cup}(A)$.
\begin{enumerate}[(i)]
\item Assume that $F^{n\cup}=F^{n}_{\cup}\circ\{\cdot\}$ is true. Then $F^{(n+1)\cup}(w)=F_{\cup}(F^{n\cup}(w))=F_{\cup}((F^{n}_{\cup}\circ\{\cdot\})(w))=F_{\cup}(F^{n}_{\cup}(\{w\}))=F^{n+1}_{\cup}(\{w\})=(F^{n+1}_{\cup}\circ\{\cdotp\})(w)$. Thus $\forall n\in N\colon F^{n\cup}=F^{n}_{\cup}\circ\{\cdot\}$ is true.
\item Assume that $F^{n-}=F^{n}_{-}\circ\{\cdot\}$ is true. Then $F^{(n+1)-}(w)=F_{-}(F^{n-}(w))=F_{-}((F^{n}_{-}\circ\{\cdot\})(w))=F_{-}(F^{n}_{-}(\{w\}))=F^{n+1}_{-}(\{w\})=(F^{n+1}_{-}\circ\{\cdotp\})(w)$. Thus $\forall n\in N\colon F^{n-}=F^{n}_{-}\circ\{\cdot\}$ is true.
\item Assume that $F^{n}_{-}=(F^{-1})^{n}_{\cup}$ is true. Then $F^{n+1}_{-}(A)=F_{-}(F^{n}_{-}(A))=$\newline$=F_{-}((F^{-1})^{n}_{\cup}(A))=(F^{-1})_{\cup}((F^{-1})^{n}_{\cup}(A))=(F^{-1})^{n+1}_{\cup}(A)$. Thus\newline$\forall n\in N\colon F^{n}_{-}=(F^{-1})^{n}_{\cup}$ is true.
\item$F^{n-}=F^{n}_{-}\circ\{\cdot\}=(F^{-1})^{n}_{\cup}\circ\{\cdot\}=(F^{-1})^{n\cup}$.
\end{enumerate}
\end{proof}
All these equations hold for the integers.
\begin{lemma}
Let $F\colon V\leadsto V$ be a multifunction and $n\in Z$. Then$\colon$

\begin{enumerate}[(i)]
\item$F^{n\cup}=F^{n}_{\cup}\circ\{\cdot\}$
\item$F^{n-}=F^{n}_{-}\circ\{\cdot\}$
\item$F^{n}_{-}=(F^{-1})^{n}_{\cup}$
\item$F^{n-}=(F^{-1})^{n\cup}$
\item$F^{n}_{\cup}=F^{n\cup}_{\cup}$
\end{enumerate}

\end{lemma}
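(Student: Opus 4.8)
The plan is to lean on the preceding lemma, which already establishes (i)--(iv) for nonnegative exponents (its induction starts from the base case $n=0$), and to reduce the remaining case $n<0$ to that one by unwinding the definitions of negative iterations. The single algebraic fact doing all the work is $(F^{-1})^{-1}=F$, recorded among the basic identities after the definition of the inverse. Part (v) needs a genuinely new observation and I treat it last.

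First I fix $n\in Z$ with $n<0$ and write $n=-m$ with $m\in N$, $m\ge 1$. For (i) I compute
\[
F^{n\cup}=(F^{-1})^{(-n)\cup}=(F^{-1})^{m\cup}=(F^{-1})^{m}_{\cup}\circ\{\cdot\}=F^{-m}_{\cup}\circ\{\cdot\}=F^{n}_{\cup}\circ\{\cdot\},
\]
where the first equality is the definition of $F^{n\cup}$ for a negative exponent, the third is the preceding lemma applied to the multifunction $F^{-1}$ and the positive integer $m$, and the fourth is the definition of $F^{-m}_{\cup}$. Part (ii) is verbatim the same computation with $-$ in place of $\cup$ throughout. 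For (iii) I again set $n=-m$ and apply the preceding lemma to $F^{-1}$:
\[
F^{n}_{-}=F^{-m}_{-}=(F^{-1})^{m}_{-}=((F^{-1})^{-1})^{m}_{\cup}=F^{m}_{\cup},
\]
while the right-hand side unwinds symmetrically,
\[
(F^{-1})^{n}_{\cup}=(F^{-1})^{-m}_{\cup}=((F^{-1})^{-1})^{m}_{\cup}=F^{m}_{\cup},
\]
so the two sides coincide. Part (iv) then follows formally and for every $n\in Z$ at once, by chaining (ii), (iii) and (i) applied to $F^{-1}$: $F^{n-}=F^{n}_{-}\circ\{\cdot\}=(F^{-1})^{n}_{\cup}\circ\{\cdot\}=(F^{-1})^{n\cup}$.

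The only part requiring a new idea is (v), the identity $F^{n}_{\cup}=F^{n\cup}_{\cup}$. The key observation is that $F_{\cup}$ commutes with arbitrary unions, $F_{\cup}(\bigcup_{i}A_{i})=\bigcup_{i}F_{\cup}(A_{i})$, that a composition of union-preserving maps is again union-preserving, and that the same holds for $(F^{-1})_{\cup}$. Hence for every $n\in Z$ the map $F^{n}_{\cup}$ — an iterate of $F_{\cup}$ when $n\ge 0$ and of $(F^{-1})_{\cup}$ when $n<0$ — preserves unions. Writing an arbitrary $A\in P(V)$ as $A=\bigcup_{a\in A}\{a\}$ and using part (i) in the form $F^{n\cup}(a)=F^{n}_{\cup}(\{a\})$, I obtain
\[
F^{n\cup}_{\cup}(A)=\bigcup_{a\in A}F^{n\cup}(a)=\bigcup_{a\in A}F^{n}_{\cup}(\{a\})=F^{n}_{\cup}\Bigl(\bigcup_{a\in A}\{a\}\Bigr)=F^{n}_{\cup}(A),
\]
which is the assertion. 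I expect the main hazard here to be bookkeeping rather than conceptual: keeping $n$, $-n$ and $m=-n$ straight while matching each step to the correct clause of the definition of negative iterations, and remembering to invoke the preceding lemma for the multifunction $F^{-1}$ rather than for $F$ itself.
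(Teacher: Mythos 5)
Your proposal is correct and follows essentially the same route as the paper: reduce the negative-exponent cases to the already-proved natural-number lemma via the definitional identities $F^{n\cup}=(F^{-1})^{(-n)\cup}$ and $F^{n}_{\cup}=(F^{-1})^{-n}_{\cup}$, and prove (v) by writing $A=\bigcup_{a\in A}\{a\}$ and pulling $F^{n}_{\cup}$ through the union. The only difference is expository --- you justify the union-preservation step that the paper silently assumes, and you obtain (iv) by chaining (i)--(iii) rather than by direct computation, which is harmless.
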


\begin{proof}
Everything is proven for $n\in N$. Fix $A\subset V,n\in-N$. Then$\colon$
\begin{enumerate}[(i)]
\item$F^{n\cup}=(F^{-1})^{(-n)\cup}=(F^{-1})^{-n}_{\cup}\circ\{\cdot\}=F^{n}_{\cup}\circ\{\cdot\}$
\item$F^{n-}=(F^{-1})^{(-n)-}=(F^{-1})^{-n}_{-}\circ\{\cdot\}=F^{n}_{-}\circ\{\cdot\}$
\item$F^{n}_{-}=(F^{-1})_{-}^{-n}=F^{-n}_{\cup}=(F^{-1})_{\cup}^{n}$
\item$F^{n-}=(F^{-1})^{(-n)-}=F^{(-n)\cup}=(F^{-1})^{n\cup}$
\item$F^{n}_{\cup}(A)=F^{n}_{\cup}(\bigcup_{a\in A}\{a\})=\bigcup_{a\in A}F^{n}_{\cup}(\{a\})=\bigcup_{a\in A}(F^{n}_{\cup}\circ\{\cdot\})(a)=$\newline$=\bigcup_{a\in A}F^{n\cup}(a)=F^{n\cup}_{\cup}(A)$
\end{enumerate}
\end{proof}

For this reason we have the following equality for infinity.

\begin{lemma}
Let $F\colon V\leadsto V$ be a multifunction and $m\in N$. Then$\colon$
\begin{enumerate}[(i)]
\item$F^{+\infty-}_{\mid m}=(F^{-1})^{+\infty\cup}_{\mid m}$
\item$F^{-\infty-}_{\mid m}=(F^{-1})^{-\infty\cup}_{\mid m}$
\item$F^{\infty-}_{\mid m}=(F^{-1})^{\infty\cup}_{\mid m}$

\end{enumerate}

\end{lemma}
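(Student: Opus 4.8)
The plan is to reduce all three equalities to the single integer-level identity $F^{n-}=(F^{-1})^{n\cup}$, which was established for every $n\in Z$ in the preceding lemma (part (iv) of the $n\in Z$ version). Each of the infinitary symbols $F^{\pm\infty-}_{\mid m}$ and $(F^{-1})^{\pm\infty\cup}_{\mid m}$ is, by definition, nothing more than a union of the finite iterates $F^{(m\cdot n)-}$ (respectively $(F^{-1})^{(m\cdot n)\cup}$) taken over an index set, so the entire statement follows by applying that identity summand by summand.

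For part (i), I would begin from the definition $F^{+\infty-}_{\mid m}=\bigcup_{n\in N}F^{(m\cdot n)-}$. For each fixed $n\in N$ the exponent $m\cdot n$ is an integer, so the identity $F^{(m\cdot n)-}=(F^{-1})^{(m\cdot n)\cup}$ applies; substituting termwise gives $\bigcup_{n\in N}(F^{-1})^{(m\cdot n)\cup}$, which is precisely the definition of $(F^{-1})^{+\infty\cup}_{\mid m}$. Part (ii) is then identical word for word, with the index set $N$ replaced by $-N$: here $m\cdot n$ ranges over the nonpositive integers, but since the integer identity is valid on all of $Z$ the same termwise substitution goes through without any extra care.

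For part (iii), I would combine the two previous parts using the definitions $F^{\infty-}_{\mid m}=F^{+\infty-}_{\mid m}\cup F^{-\infty-}_{\mid m}$ and $(F^{-1})^{\infty\cup}_{\mid m}=(F^{-1})^{+\infty\cup}_{\mid m}\cup(F^{-1})^{-\infty\cup}_{\mid m}$. By (i) and (ii) the two component multifunctions on each side coincide, so taking the union yields (iii) directly.

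There is no genuine obstacle here: the whole content of the lemma is carried by the earlier integer identity, and the only point requiring a word of justification is that the exponent $m\cdot n$ always lands in $Z$, so that the termwise substitution inside the union is legitimate. This is immediate, since $m\in N$ and $n$ ranges over $N$ or over $-N$. Thus the statement is essentially a bookkeeping remark recording that the integer-level equalities pass to the infinitary unions defining the $\mid m$ iterates.
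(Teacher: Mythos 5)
Your proposal is correct and coincides with the paper's own proof: both unfold the definitions of $F^{\pm\infty-}_{\mid m}$ as unions over $N$ (resp. $-N$), substitute the integer-level identity $F^{(m\cdot n)-}=(F^{-1})^{(m\cdot n)\cup}$ termwise, and obtain (iii) by taking the union of (i) and (ii).
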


\begin{proof}
It is easily seen that$\colon$
\begin{enumerate}[(i)]
\item$F^{+\infty-}_{\mid m}=\bigcup_{n\in N}F^{(m\cdotp n)-}=\bigcup_{n\in N}(F^{-1})^{(m\cdotp n)\cup}=(F^{-1})^{+\infty\cup}_{\mid m}$
\item$F^{-\infty-}_{\mid m}=\bigcup_{n\in-N}F^{(m\cdotp n)-}=\bigcup_{n\in-N}(F^{-1})^{(m\cdotp n)\cup}=(F^{-1})^{-\infty\cup}_{\mid m}$ \item$F^{\infty-}_{\mid m}=F^{+\infty-}_{\mid m}\cup F^{-\infty-}_{\mid m}=(F^{-1})^{+\infty\cup}_{\mid m}\cup(F^{-1})^{-\infty\cup}_{\mid m}=(F^{-1})^{\infty\cup}_{\mid m}$. 
\end{enumerate}
\end{proof}
These equations allow us to use only $F^{n\cup}$ and $F^{n}_{\cup}$. Here we list some simple properties of these iterations starting from the concept of walk. 

\begin{lemma}
If $F\colon V\leadsto V$ is a multifunction then $F^{n\cup}\colon V\to P(V);w\mapsto F^{n\cup}(w)=\{u\in V\mid Walk_{u\to w}(F,V,n)\neq\emptyset\}$ for each $n\in N$.

\end{lemma}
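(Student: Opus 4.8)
The plan is to prove the identity by induction on $n\in N$, using the recursive definition of $F^{n\cup}$ together with the elementary walk properties already established above (in particular $Walk(F,V,0)=V$, $Walk_{u\to u}(F,V,0)=\{u\}$, the formula for $Walk_{u\to w}(F,V,1)$, and the concatenation property).

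For the base case $n=0$ recall that $F^{0\cup}(w)=\{w\}$ by definition. On the other side, since $Walk(F,V,0)=V$ and $Walk_{u\to u}(F,V,0)=\{u\}$, a walk $\alpha\in Walk_{u\to w}(F,V,0)$ exists precisely when $u=w$; hence $\{u\in V\mid Walk_{u\to w}(F,V,0)\neq\emptyset\}=\{w\}=F^{0\cup}(w)$.

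For the inductive step, assume the equality holds for some $n$. Unfolding the definition gives $F^{(n+1)\cup}(w)=F_{\cup}(F^{n\cup}(w))=\bigcup_{v\in F^{n\cup}(w)}F(v)$, so $u\in F^{(n+1)\cup}(w)$ iff there is some $v\in F^{n\cup}(w)$ with $u\in F(v)$. By the inductive hypothesis, $v\in F^{n\cup}(w)$ is equivalent to $Walk_{v\to w}(F,V,n)\neq\emptyset$, and by the formula for one-edge walks, $u\in F(v)$ is equivalent to $Walk_{u\to v}(F,V,1)\neq\emptyset$. Thus everything reduces to the one-step decomposition
\[
Walk_{u\to w}(F,V,n+1)\neq\emptyset\ \Leftrightarrow\ \exists v\in V\colon u\in F(v)\ \land\ Walk_{v\to w}(F,V,n)\neq\emptyset .
\]
The backward implication is immediate from the concatenation property: gluing the length-$1$ walk $u\to v$ onto the length-$n$ walk $v\to w$ (via $\text{tail}$ and $\text{concat}$) yields a walk of length $n+1$ from $u$ to $w$. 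For the forward implication I would take any $\alpha\in Walk_{u\to w}(F,V,n+1)\subset V^{\star}_{n+2}$, set $v:=\alpha_{2}$, and note that $\alpha_{1}=u\in F(\alpha_{2})=F(v)$ while the subword $\alpha_{2}\alpha_{3}\cdots\alpha_{n+2}$ is a walk of length $n$ from $v$ to $w$; combining the two equivalences closes the induction.

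The main obstacle is the forward direction of the one-step decomposition: one must peel exactly one edge from the $u$-end of a walk and verify, at the level of the word operations on $V^{\star}$, that the remaining subword is again a walk with the correct endpoints and length. This is purely a bookkeeping argument about the indices produced by $\text{tail}$ and $\text{concat}$, but it is where all the care is needed; the reverse gluing is already furnished by the concatenation property, and the reduction of $u\in F(v)$ to a length-$1$ walk is routine.
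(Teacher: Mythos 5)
Your proposal is correct and follows essentially the same route as the paper: induction on $n$, unfolding $F^{(n+1)\cup}(w)=F_{\cup}(F^{n\cup}(w))$ and matching the existential over $v$ with the one-edge decomposition of a walk of length $n+1$. The only difference is that you spell out the forward direction of that decomposition (peeling the first edge off the word) explicitly, whereas the paper asserts the equivalence directly, implicitly relying on its earlier splitting lemma for walks.
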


\begin{proof}
Fix $u,w\in V$. We prove by induction. For $n=0$ we calculate $u\in F^{0\cup}(w)=\{w\}=Walk_{w\to w}(F,V,0)$ iff $Walk_{u\to w}(F,V,0)\neq\emptyset$. For $n=1$ we calculate $u\in F^{1\cup}(w)=F(w)$ iff $Walk_{u\to w}(F,V,1)\neq\emptyset$. Assume that $u\in F^{n\cup}(w)\Leftrightarrow Walk_{u\to w}(F,V,n)\neq\emptyset$. Then $u\in F^{(n+1)\cup}(w)=F_{\cup}(F^{n\cup}(w))$ iff there exists $v\in V$ such that $v\in F^{n\cup}(w)$ and $u\in F(v)=F^{1\cup}(v)$; or equivalently, there exists $v\in V$ such that $Walk_{v\to w}(F,V,n)\neq\emptyset$ and $Walk_{u\to v}(F,V,1)\neq\emptyset$. We may say equivalently that $Walk_{u\to w}(F,V,n+1)\neq\emptyset$.  
\end{proof}

\begin{lemma}
Let $F\colon V\leadsto V$ be a multifunction and $n,m\in N$. Then$\colon$
\begin{enumerate}[(i)]
\item$F^{(n+m)\cup}=F_{\cup}^{n}\circ F^{m\cup}$
\item$F^{(-n)\cup}=(F^{-1})^{n\cup}=(F^{n\cup})^{-1}$
\item$F^{n\cup}_{-}=F^{n}_{-}$
\item$(F^{n\cup})^{m\cup}=F^{(n\cdotp m)\cup}$
\end{enumerate}
\end{lemma}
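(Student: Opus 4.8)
The plan is to prove the four identities in the order given, using a single induction for (i) and then feeding (i) together with the walk characterisation $F^{n\cup}(w)=\{u\in V\mid Walk_{u\to w}(F,V,n)\neq\emptyset\}$ and the bridging identities from the preceding lemmas (notably $F^{n\cup}=F^{n}_{\cup}\circ\{\cdot\}$, $F^{n}_{\cup}=F^{n\cup}_{\cup}$, $F^{n}_{-}=(F^{-1})^{n}_{\cup}$, and $F_{-}=(F^{-1})_{\cup}$ from Lemma 2.3(iii)) into (ii)--(iv). Throughout I would keep a sharp eye on the deliberate distinction between the set-maps $P(V)\to P(V)$ (written $F^{n}_{\cup}$) and the multifunctions $V\to P(V)$ (written $F^{n\cup}$).

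For (i) I would fix $m$ and induct on $n$. The base case $n=0$ reduces to $F^{0}_{\cup}=\text{id}_{P(V)}$, so both sides equal $F^{m\cup}$. For the inductive step I would rewrite $F^{((n+m)+1)\cup}=F_{\cup}\circ F^{(n+m)\cup}$ straight from the recursive definition of the positive iterate, apply the hypothesis $F^{(n+m)\cup}=F^{n}_{\cup}\circ F^{m\cup}$, and reassociate using $F_{\cup}\circ F^{n}_{\cup}=F^{n+1}_{\cup}$, which is again the defining recursion of the set-map iterate. This part is routine.

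For (ii), the first equality $F^{(-n)\cup}=(F^{-1})^{n\cup}$ is literally the definition of the negative iterate, so nothing is needed beyond matching indices (with $n=0$ giving $\{\cdot\}$ on both sides). The substantive equality is $(F^{-1})^{n\cup}=(F^{n\cup})^{-1}$, where the right-hand side is the multifunction inverse of $F^{n\cup}$. Here I would apply the walk characterisation to both sides: $(F^{n\cup})^{-1}(w)=\{u\mid w\in F^{n\cup}(u)\}=\{u\mid Walk_{w\to u}(F,V,n)\neq\emptyset\}$, while $(F^{-1})^{n\cup}(w)=\{u\mid Walk_{u\to w}(F^{-1},V,n)\neq\emptyset\}$. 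The bridge is the reversal identity $Walk_{u\to w}(F^{-1},V,n)=\text{reverse}(Walk_{w\to u}(F,V,n))$ together with the fact that $\text{reverse}$ is a bijection, so the two walk-sets are simultaneously empty or nonempty. This is the one place where genuine combinatorial input (the reversal lemma) is used rather than pure formula-chasing, so I expect it to be the main obstacle, mostly in making sure the multifunction inverse and the reversed walk are lined up with the correct endpoints.

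Parts (iii) and (iv) should then fall out by chaining established identities. For (iii) I would apply $G_{-}=(G^{-1})_{\cup}$ (Lemma 2.3(iii)) with $G=F^{n\cup}$, giving $F^{n\cup}_{-}=(F^{n\cup})^{-1}_{\cup}$; then substitute $(F^{n\cup})^{-1}=(F^{-1})^{n\cup}$ from (ii), collapse $((F^{-1})^{n\cup})_{\cup}=(F^{-1})^{n}_{\cup}$ via $F^{n}_{\cup}=F^{n\cup}_{\cup}$ applied to $F^{-1}$, and finish with $(F^{-1})^{n}_{\cup}=F^{n}_{-}$. For (iv) I would induct on $m$: the cases $m=0,1$ are immediate, and for the step I would write $(F^{n\cup})^{(m+1)\cup}=(F^{n\cup})_{\cup}\circ(F^{n\cup})^{m\cup}$, replace $(F^{n\cup})_{\cup}$ by $F^{n}_{\cup}$ (again $F^{n}_{\cup}=F^{n\cup}_{\cup}$), apply the hypothesis $(F^{n\cup})^{m\cup}=F^{(nm)\cup}$, and invoke part (i) in the form $F^{n}_{\cup}\circ F^{(nm)\cup}=F^{(n+nm)\cup}=F^{(n(m+1))\cup}$. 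The only care needed in (iii) and (iv) is, once more, to track which symbols are set-maps and which are multifunctions.
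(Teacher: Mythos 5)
Your proposal is correct, and for parts (ii)--(iv) it follows essentially the same chain of identities as the paper: (ii) via the walk characterisation of $F^{n\cup}$ together with the reversal identity $Walk_{u\to w}(F^{-1},V,n)=\text{reverse}(Walk_{w\to u}(F,V,n))$; (iii) via exactly the composition $F^{n\cup}_{-}=(F^{n\cup})^{-1}_{\cup}=((F^{-1})^{n\cup})_{\cup}=(F^{-1})^{n}_{\cup}=F^{n}_{-}$; and (iv) by induction on $m$ using $(F^{n\cup})_{\cup}=F^{n}_{\cup}$ and part (i). The one genuine divergence is part (i): the paper proves it by translating both sides into statements about walks and invoking the concatenation/splitting of walks (Lemma 3.15), i.e. $u\in F^{n}_{\cup}(F^{m\cup}(w))$ iff there is $v$ with $Walk_{v\to w}(F,V,m)\neq\emptyset$ and $Walk_{u\to v}(F,V,n)\neq\emptyset$ iff $Walk_{u\to w}(F,V,n+m)\neq\emptyset$. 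Your direct induction on $n$ (base $F^{0}_{\cup}=\mathrm{id}_{P(V)}$, step $F_{\cup}\circ F^{n}_{\cup}=F^{n+1}_{\cup}$) is more elementary and self-contained: it needs only the recursive definitions and not the walk machinery, and in particular it sidesteps the splitting-of-walks direction, which in the paper rests on a lemma stated only for a split at \emph{some} position rather than at the exact position $m+1$. The trade-off is that the paper's walk argument makes the combinatorial meaning of the identity transparent, which it then reuses in the subsequent connectedness results, whereas your induction is purely formal.
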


\begin{proof}
Fix $u,w\in V$.
\begin{enumerate}[(i)]
\item$u\in F_{\cup}^{n}(F^{m\cup}(w))=F_{\cup}^{n\cup}(F^{m\cup}(w))$ iff there exists $v\in V$ such that $v\in F^{m\cup}(w)$ and $u\in F^{n\cup}(v)$; or equivalently, there exists $v\in V$ such that $\emptyset\neq Walk_{v\to w}(F,V,m)$ and $\emptyset\neq Walk_{u\to v}(F,V,n)$. We may say equivalently that $\emptyset\neq Walk_{u\to w}(F,V,m+n)$ i.e. $u\in F^{(n+m)\cup}(w)$.
\item$F^{(-n)\cup}(w)=(F^{-1})^{n\cup}(w)=\{u\in V\mid Walk_{u\to w}(F^{-1},V,n)\neq\emptyset\}=\{u\in V\mid\text{reverse}(Walk_{w\to u}(F,V,n))\}=\{u\in V\mid Walk_{w\to u}(F,V,n)\neq\emptyset\}=(F^{n\cup})^{-1}(w)$.
\item$F^{n\cup}_{-}=(F^{n\cup})_{-}=(F^{n\cup})^{-1}_{\cup}=(F^{-1})^{n\cup}_{\cup}=(F^{-1})^{n}_{\cup}=F^{n}_{-}$.
\item We prove by induction. For $n=0$ we calculate $(F^{n\cup})^{0\cup}(w)=\{w\}=F^{0\cup}(w)=F^{(n\cdotp 0)\cup}(w)$. For $n=1$ we calculate $(F^{n\cup})^{1\cup}(w)=(F^{n\cup}_{\cup}\circ\{\cdot\})(w)=(F^{n}_{\cup}\circ\{\cdot\})(w)=F^{n\cup}(w)=F^{(n\cdotp 1)\cup}(w)$. For $n=2$ we calculate $(F^{n\cup})^{2\cup}(w)=F^{n\cup}_{\cup}((F^{n\cup})^{1\cup}(w))=F^{n\cup}_{\cup}(F^{n\cup}(w))=F^{n}_{\cup}(F^{n\cup}(w))=F^{(n+n)\cup}(w)=F^{(2\cdotp n)\cup}(w)$.
Assume that $(F^{n\cup})^{m\cup}=F^{(n\cdotp m)\cup}$ is true. Then $(F^{n\cup})^{(m+1)\cup}=(F^{n\cup})^{m}_{\cup}\circ(F^{n\cup})^{1\cup}=(F^{n\cup})^{m\cup}_{\cup}\circ F^{n\cup}=F^{(n\cdotp m)\cup}_{\cup}\circ F^{n\cup}=F^{n\cdotp m}_{\cup}\circ F^{n\cup}=F^{(n\cdotp m+n)\cup}=F^{(n\cdotp(m+1))\cup}$.\newline Thus $\forall m,n\in N\colon(F^{n\cup})^{m\cup}=F^{(n\cdotp m)\cup}$ is true.

\end{enumerate}
\end{proof}
Only the last of these equations is fulfilled for integers.
\begin{lemma}
If $F\colon V\leadsto V$ is a multifunction then $(F^{n\cup})^{m\cup}=F^{(n\cdotp m)\cup}$ for each $n,m\in Z$.

\end{lemma}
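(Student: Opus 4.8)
The plan is to leverage the natural-number version already established, namely $(F^{n\cup})^{m\cup}=F^{(n\cdot m)\cup}$ for $n,m\in N$, together with the identities $F^{(-n)\cup}=(F^{-1})^{n\cup}=(F^{n\cup})^{-1}$ (for $n\in N$), the defining clause $F^{k\cup}=(F^{-1})^{(-k)\cup}$ for $k<0$, and the basic fact $(F^{-1})^{-1}=F$. The strategy is a case analysis on the signs of $n$ and $m$; in each case the negative-index definitions are unwound so that the outer iteration is applied with a nonnegative exponent to a multifunction that is itself a nonnegative iterate of either $F$ or $F^{-1}$, at which point the already-proven natural-number identity finishes the computation.

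Concretely, I would organize the argument by the sign of $m$ first. When $m\geq 0$ and $n\geq 0$ there is nothing to do: this is exactly the previous lemma. When $m\geq 0$ but $n<0$, I would write $F^{n\cup}=(F^{-1})^{(-n)\cup}$ with $-n\in N$ and apply the natural-number identity to the multifunction $F^{-1}$, obtaining $((F^{-1})^{(-n)\cup})^{m\cup}=(F^{-1})^{(-nm)\cup}$; since $-nm\geq 0$, the defining clause gives $(F^{-1})^{(-nm)\cup}=F^{(nm)\cup}$, as desired.

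When $m<0$ I would first use the definition $G^{m\cup}=(G^{-1})^{(-m)\cup}$ for $G=F^{n\cup}$, so the task reduces to identifying $G^{-1}$ as a nonnegative iterate and then applying the natural-number identity with exponent $-m\in N$. If $n\geq 0$, then $G^{-1}=(F^{n\cup})^{-1}=(F^{-1})^{n\cup}$ by part (ii) of the previous lemma, and $((F^{-1})^{n\cup})^{(-m)\cup}=(F^{-1})^{(-nm)\cup}=F^{(nm)\cup}$ (using $-nm\geq 0$). If $n<0$, then $G=(F^{-1})^{(-n)\cup}$, and applying part (ii) to $F^{-1}$ together with $(F^{-1})^{-1}=F$ gives $G^{-1}=F^{(-n)\cup}$ with $-n\in N$; hence $(F^{(-n)\cup})^{(-m)\cup}=F^{((-n)(-m))\cup}=F^{(nm)\cup}$, since $(-n)(-m)=nm$.

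The computations are routine once the cases are set up; the only point requiring care, and the place where a sign error is easiest to make, is keeping track of which base multifunction ($F$ or $F^{-1}$) the natural-number identity is being applied to and confirming that the resulting exponent is the nonnegative number whose negative reproduces $nm$ through the defining clause. I would also note at the outset that whenever $n=0$ or $m=0$ both sides collapse to the singleton multifunction $\{\cdot\}$, so these degenerate situations are already subsumed under the $n,m\geq 0$ case and need no separate treatment.
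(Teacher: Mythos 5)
Your proof is correct and follows essentially the same route as the paper: reduce each sign case to the already-proven natural-number identity via $F^{(-n)\cup}=(F^{-1})^{n\cup}=(F^{n\cup})^{-1}$ and the defining clause for negative exponents. If anything, your explicit handling of the doubly negative case (identifying $G^{-1}=F^{(-n)\cup}$ before applying the natural-number lemma) is slightly more careful than the paper's, which obtains that case by substituting $-n$ into a formula it had only derived for $n\in N$.
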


\begin{proof}
Fix $m,n\in N$. We proved that $(F^{n\cup})^{m\cup}=F^{(n\cdotp m)\cup}$ for any $n,m\in N$. Then $(F^{n\cup})^{(-m)\cup}=((F^{n\cup})^{-1})^{m\cup}=(F^{(-n)\cup})^{m\cup}=((F^{-1})^{n\cup})^{m\cup}=(F^{-1})^{(n\cdotp m)\cup}=F^{(-n\cdotp m)\cup}$ and $(F^{(-n)\cup})^{(-m)\cup}=F^{(-(-n)\cdotp m)\cup}=F^{(n\cdotp m)\cup}$. Thus $(F^{n\cup})^{m\cup}=F^{(n\cdotp m)\cup}$ for each $n,m\in Z$.
\end{proof}

The situation is simplified when we use only strict and undirected multifunctions. That is why it is necessary to consider only such multifunctions.
\begin{lemma}
Let $F\colon V\leadsto V$ be a multifunction. Then$\colon$
\begin{enumerate}[(i)]
\item$F$ is undirected iff $F^{n\cup}$ is undirected for each $n\in Z$
\item if $F$ is stirct and undirected then $F^{2\cup}$ is everywhereloop 
\item if $F$ is strict and undirected then $F^{n\cup}\subset F^{(n+2)\cup}$ for each $n\in N$
\item if $F$ is strict and undirected then $F^{(2\cdotp n)\cup}$ is everywhereloop for each $n\in N$ 
\item if $F$ is undirected then $F^{\infty\cup}_{\mid m}=F^{-\infty\cup}_{\mid m}=F^{+\infty\cup}_{\mid m}$ for each $m\in N$
\end{enumerate}

\end{lemma}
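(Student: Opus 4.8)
The plan is to dispatch the five items in order, since each later part leans on the earlier ones, and to rely throughout on the iteration identities already established (the inverse identity $(F^{n\cup})^{-1}=(F^{-1})^{n\cup}$, the composition identity $F^{(n+m)\cup}=F^{n}_{\cup}\circ F^{m\cup}$, the presentation $F^{n\cup}=F^{n}_{\cup}\circ\{\cdot\}$, and the walk characterization $F^{n\cup}(w)=\{u\mid Walk_{u\to w}(F,V,n)\neq\emptyset\}$). For (i), if $F=F^{-1}$ then $(F^{n\cup})^{-1}=(F^{-1})^{n\cup}=F^{n\cup}$, so every iterate is undirected; conversely, specializing to $n=1$ and using $F^{1\cup}=F$ recovers $F=F^{-1}$.

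The one genuinely creative step is (ii): the strict-undirected hypothesis forces a length-two loop at every vertex. Fixing $v$, strictness provides some $w\in F(v)$, and undirectedness ($F=F^{-1}$) then yields $v\in F(w)$, whence $v\in F_{\cup}(F(v))=F^{2\cup}(v)$. Thus $\{\cdot\}\subset F^{2\cup}$, i.e. $F^{2\cup}$ is everywhereloop. (Equivalently, the word $vwv$ belongs to $Walk_{v\to v}(F,V,2)$, and the walk characterization of $F^{2\cup}$ gives the same conclusion.)

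Part (iii) is then formal: writing $F^{(n+2)\cup}=F^{n}_{\cup}\circ F^{2\cup}$ and $F^{n\cup}=F^{n}_{\cup}\circ\{\cdot\}$, the inclusion $\{\cdot\}\subset F^{2\cup}$ from (ii) together with the monotonicity of $F^{n}_{\cup}$ (immediate from $F_{\cup}(A)=\bigcup_{a\in A}F(a)$) yields $F^{n\cup}=F^{n}_{\cup}\circ\{\cdot\}\subset F^{n}_{\cup}\circ F^{2\cup}=F^{(n+2)\cup}$. For (iv) I would induct on $n$: the base case is (ii), and the inductive step chains $\{\cdot\}\subset F^{(2n)\cup}\subset F^{(2n+2)\cup}=F^{(2(n+1))\cup}$, applying (iii) with index $2n$.

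Finally (v) rests on the sign-flip identity. Since $F$ is undirected, for $n\in-N$ the definition of negative iteration together with $F^{-1}=F$ gives $F^{(m\cdot n)\cup}=(F^{-1})^{(m\cdot(-n))\cup}=F^{(m\cdot(-n))\cup}$, so reindexing $k=-n$ shows $F^{-\infty\cup}_{\mid m}=\bigcup_{n\in-N}F^{(m\cdot n)\cup}=\bigcup_{k\in N}F^{(m\cdot k)\cup}=F^{+\infty\cup}_{\mid m}$; then $F^{\infty\cup}_{\mid m}=F^{+\infty\cup}_{\mid m}\cup F^{-\infty\cup}_{\mid m}$ collapses to the common value. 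I expect essentially all the content to sit in (ii)—recognizing that strict plus undirected forces the length-two loop—while the only bookkeeping hazards are keeping the index set $-N$ straight in (v) and recording the monotonicity of $F^{n}_{\cup}$ in (iii); everything else is a direct application of identities already proven.
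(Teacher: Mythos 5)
Your proposal is correct and follows essentially the same route as the paper: part (i) via $(F^{n\cup})^{-1}=(F^{-1})^{n\cup}$ with the converse read off at $n=1$, part (ii) by producing the length-two loop from strictness plus $F=F^{-1}$, part (iii) from $F^{(n+2)\cup}=F^{n}_{\cup}\circ F^{2\cup}$ together with $\{\cdot\}\subset F^{2\cup}$ (the paper writes this element-wise by choosing $a=w$ inside the union, which is the same monotonicity argument), part (iv) by the induction you describe, and part (v) by the sign-flip reindexing. No gaps.
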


\begin{proof}
Fix $n\in Z$ and $u,v,w\in V$.
\begin{enumerate}[(i)]
\item If $F=F^{-1}$ then $(F^{n\cup})^{-1}=F^{(-n)\cup}=(F^{-1})^{n\cup}=F^{n\cup}$. If $F^{n\cup}=F^{(-n)\cup}$ for each $n\in Z$ then $F=F^{1\cup}=F^{(-1)\cup}=F^{-1}$ i.e. $F$ is undirected.
\item$v\in F^{2\cup}(v)=F_{\cup}(F^{1\cup}(v))=F_{\cup}(F(v))$ iff there exists $a\in V$ such that $a\in F(v)$ and $v\in F(a)$; or equivalently, there exists $a\in V$ such that $a\in F(v)$ and $a\in F^{-1}(v)=F(v)$, assuming that $F$ is undirected. We may say equivalently that there exists $a\in V$ such that $a\in F(v)$ i.e. $F(v)\neq\emptyset$, but $F$ is strict. 
\item If $u\in F^{n\cup}(w)$ then there exists $a\in V,a=w$ such that $u\in F^{n\cup}(a)$ and $a\in F^{2\cup}(w)$ i.e. $u\in\bigcup_{a\in F^{2\cup}(w)}F^{n\cup}(a)=F^{n\cup}_{\cup}(F^{2\cup}(w))=$\newline$=F^{n}_{\cup}(F^{2\cup}(w))=F^{(n+2)\cup}(w)$. Thus $F^{n\cup}(w)\subset F^{(n+2)\cup}(w)$ i.e. $F^{(2\cdotp n)\cup}$ is everywhereloop for each $n\in N$.
\item We prove by induction. First inductive step is true because $F^{2\cup}$ is everywhereloop. Assume that $\{\cdotp\}\subset F^{(2\cdotp n)\cup}$ is true i.e. $F^{(2\cdotp n)\cup}$ is everywhereloop. Then $\{\cdotp\}\subset F^{(2\cdotp n)\cup}\subset F^{(2\cdotp n+2)\cup}=F^{(2\cdotp(n+1))\cup}$ i.e. $F^{(2\cdotp(n+1))\cup}$ is everywhereloop.
\item$F$ is undirected iff $F^{n\cup}$ is undirected for each $n\in Z$ i.e. $F^{(-n)\cup}=(F^{n\cup})^{-1}=F^{n\cup}$ and so $F^{+\infty\cup}_{\mid m}=\bigcup_{n\in N}F^{(m\cdotp n)\cup}=\bigcup_{n\in-N}F^{((-n)\cdotp m)\cup}=\bigcup_{n\in-N}F^{(n\cdotp m)\cup}=F^{-\infty\cup}_{\mid m}$. Then $F^{\infty\cup}_{\mid m}=F^{+\infty\cup}_{\mid m}\cup F^{-\infty\cup}_{\mid m}=F^{+\infty\cup}_{\mid m}=F^{-\infty\cup}_{\mid m}$. 

\end{enumerate}
\end{proof}

Although we have the equality $(F^{n\cup})^{m\cup}=F^{(n\cdotp m)\cup}$ for each $n,m\in N$ we can't generalize it on infinity.  
\begin{lemma}
Let $F\colon V\leadsto V$ be an undirected multifunction and $m,k\in N$. Then$\colon$
\begin{enumerate}[(i)]
\item$F^{\infty\cup}_{\mid m}\colon V\to P(V);w\mapsto F^{\infty\cup}_{\mid m}(w)=\{u\in V\mid Walk_{u\to w}(F,V\mid m)\neq\emptyset\}$ is undirected, everywhereloop and transitive
\item$F^{\infty\cup}_{\mid-m}=F^{\infty-}_{\mid m}=F^{\infty\cup}_{\mid m}$
\item$(F^{m\cup})^{\infty\cup}_{\mid k}=F^{\infty\cup}_{\mid k\cdotp m}$
\item if $k\mid m$ then $F^{\infty\cup}_{\mid m}\subset F^{\infty\cup}_{\mid k}$ and $F^{m\cup}\subset F^{\infty\cup}_{\mid k}$
\item$F^{m\cup}\subset F^{\infty\cup}_{\mid m}\subset F^{\infty\cup}_{\mid 1}$
\end{enumerate}

\end{lemma}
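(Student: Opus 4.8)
The plan is to reduce every claim to two earlier results: the walk characterization $F^{n\cup}(w)=\{u\in V\mid Walk_{u\to w}(F,V,n)\neq\emptyset\}$, and the undirected-iteration lemma, whose part (v) collapses $F^{\infty\cup}_{\mid m}=F^{+\infty\cup}_{\mid m}=\bigcup_{n\in N}F^{(m\cdot n)\cup}$ for undirected $F$. I would establish part (i) first, since parts (ii)--(v) are then cheap reindexings of unions once (i) and the undirected lemma are in hand.

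For part (i), using $F^{\infty\cup}_{\mid m}=F^{+\infty\cup}_{\mid m}$ and the walk characterization I would compute
\[
F^{\infty\cup}_{\mid m}(w)=\bigcup_{n\in N}F^{(m\cdot n)\cup}(w)=\{u\in V\mid Walk_{u\to w}(F,V\mid m)\neq\emptyset\},
\]
the last equality being exactly the definition of $Walk(F,V\mid m)$ as the disjoint union of the sets $Walk(F,V,m\cdot n)$. The everywhereloop property is then immediate, since the index $n=0$ contributes the term $F^{0\cup}=\{\cdot\}\subset F^{\infty\cup}_{\mid m}$. Undirectedness follows from the fact that each $F^{(m\cdot n)\cup}$ is undirected together with $(\bigcup_n F_n)^{-1}=\bigcup_n F_n^{-1}$, whence $(F^{\infty\cup}_{\mid m})^{-1}=\bigcup_n(F^{(m\cdot n)\cup})^{-1}=\bigcup_n F^{(m\cdot n)\cup}=F^{\infty\cup}_{\mid m}$. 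The only genuinely combinatorial step is transitivity: if $u\in F^{\infty\cup}_{\mid m}(v)$ and $v\in F^{\infty\cup}_{\mid m}(w)$, the walk characterization supplies $a,b$ with $Walk_{u\to v}(F,V,m\cdot a)\neq\emptyset$ and $Walk_{v\to w}(F,V,m\cdot b)\neq\emptyset$; concatenating them via the walk-concatenation property gives $Walk_{u\to w}(F,V,m(a+b))\neq\emptyset$, and since $m\mid m(a+b)$ this means $u\in F^{\infty\cup}_{\mid m}(w)$.

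The remaining parts I would dispatch as reindexings. For part (iv), writing $m=k\cdot j$, every term $F^{(m\cdot n)\cup}=F^{(k\cdot(jn))\cup}$ of $F^{\infty\cup}_{\mid m}$ already occurs in $F^{\infty\cup}_{\mid k}=\bigcup_{l\in N}F^{(k\cdot l)\cup}$, so $F^{\infty\cup}_{\mid m}\subset F^{\infty\cup}_{\mid k}$, and the single term $F^{m\cup}=F^{(k\cdot j)\cup}$ likewise lies in $F^{\infty\cup}_{\mid k}$. Part (v) is then immediate: $F^{m\cup}=F^{(m\cdot 1)\cup}\subset F^{+\infty\cup}_{\mid m}=F^{\infty\cup}_{\mid m}$, and the second inclusion is the case $k=1$ of part (iv), since $1\mid m$. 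For part (iii), $F^{m\cup}$ is undirected, so the undirected lemma applies to it and the composition law $(F^{m\cup})^{(k\cdot n)\cup}=F^{((k\cdot m)\cdot n)\cup}$ turns $\bigcup_{n\in N}(F^{m\cup})^{(k\cdot n)\cup}$ into $\bigcup_{n\in N}F^{((k\cdot m)\cdot n)\cup}=F^{\infty\cup}_{\mid k\cdot m}$. For part (ii), the earlier identity $F^{\infty-}_{\mid m}=(F^{-1})^{\infty\cup}_{\mid m}$ together with $F^{-1}=F$ yields $F^{\infty-}_{\mid m}=F^{\infty\cup}_{\mid m}$, while $F^{\infty\cup}_{\mid -m}=F^{\infty\cup}_{\mid m}$ holds because $F^{(-k)\cup}=(F^{k\cup})^{-1}=F^{k\cup}$ for undirected $F$, so negating the divisor only reindexes the same union.

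I expect the transitivity clause of (i) to be the main obstacle, though only mildly: it is the sole place where an actual graph-theoretic fact, namely that concatenating two walks adds their lengths and so preserves divisibility by $m$, is invoked, whereas everything else is manipulation of indexing sets and citation of the undirected-iteration lemma. The one convention I must keep fixed is that $0\in N$ throughout; this is what makes the everywhereloop assertion and the $k\mid 0$ boundary case of (iv) come out correctly.
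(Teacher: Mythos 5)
Your proof is correct and follows essentially the same route as the paper: reduce everything to the walk characterization of $F^{n\cup}$ and the composition/reindexing laws for iterations, then treat (ii)--(v) as manipulations of the indexing set of the union. The only divergence is the everywhereloop step of (i), where you use the $n=0$ term $F^{0\cup}=\{\cdot\}$ (valid under your stated convention $0\in N$), whereas the paper invokes the everywhereloop property of $F^{(2\cdot m)\cup}$, which was established only for \emph{strict} undirected multifunctions -- so your variant in fact fits the lemma's stated hypotheses more cleanly.
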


\begin{proof}
Fix $u,v,w\in V$.
\begin{enumerate}[(i)]
\item$F^{\infty\cup}_{\mid m}(w)=\bigcup_{n\in N}F^{(m\cdotp n)\cup}(w)=\bigcup_{n\in N}\{u\in V\mid Walk_{u\to w}(F,V,m\cdotp n)\neq\emptyset\}=\{u\in V\mid\bigcup_{n\in N}Walk_{u\to w}(F,V,m\cdotp n)\neq\emptyset\}=$\newline$=\{u\in V\mid Walk_{u\to w}(F,V\mid m)\neq\emptyset\}$. But $F^{(2\cdotp m)\cup}$ is everywhereloop and $F^{(2\cdotp m)\cup}(w)\subset\bigcup_{n\in N}F^{(m\cdotp n)\cup}(w)=F^{\infty\cup}_{\mid m}(w)$, thus $F^{\infty\cup}_{\mid m}$ is everywhereloop. Assume that $u\in F^{\infty\cup}_{\mid m}(v)$ and $v\in F^{\infty\cup}_{\mid m}(w)$.\newline Then there exist $i,j\in N$ such that $u\in F^{(i\cdotp m)\cup}(v)$ and $v\in F^{(j\cdotp m)\cup}(w)$. Then there exist $i,j\in N$ and $a\in V,a=v$ such that $a\in F^{(j\cdotp m)\cup}(w)$ and $u\in F^{(i\cdotp m)\cup}(a)$; or equivalently, $u\in\bigcup_{a\in F^{(j\cdotp m)\cup}(w)}F^{(i\cdotp m)\cup}(a)=F^{(i\cdotp m)\cup}_{\cup}(F^{(j\cdotp m)\cup}(w))=F^{i\cdotp m}_{\cup}(F^{(j\cdotp m)\cup}(w))=F^{((i+j)\cdotp m)\cup}(w)\subset$\newline$\subset\bigcup_{n\in N}F^{(n\cdotp m)\cup}(w)=F^{\infty\cup}_{\mid m}(w)$ and then $u\in F^{\infty\cup}_{\mid m}(w)$, thus $F^{\infty\cup}_{\mid m}$ is transitive. $F^{\infty\cup}_{\mid m}$ is undirected because $(F^{\infty\cup}_{\mid m})^{-1}=(\bigcup_{n\in N}F^{(n\cdotp m)\cup})^{-1}=$\newline$=\bigcup_{n\in N}(F^{(m\cdotp n)\cup})^{-1}=\bigcup_{n\in N}(F^{-1})^{(m\cdotp n)\cup}=\bigcup_{n\in N}F^{(m\cdotp n)\cup}=F^{\infty\cup}_{\mid m}$.
\item$F^{\infty\cup}_{\mid-m}=\bigcup_{n\in N}F^{(-m\cdotp n)\cup}=\bigcup_{n\in N}(F^{-1})^{(m\cdotp n)\cup}=(F^{-1})^{\infty\cup}_{\mid m}=F^{\infty\cup}_{\mid m}=$\newline$=(F^{-1})^{\infty\cup}_{\mid m}=F^{\infty-}_{\mid m}$
\item$(F^{m\cup})^{\infty\cup}_{\mid k}=\bigcup_{n\in N}F^{(k\cdotp m\cdotp n)\cup}=F^{\infty\cup}_{\mid k\cdotp m}$.
\item Recall that $k\mid m$ iff there exists $l\in Z$ such that $m=k\cdotp l$. Then $F^{\infty\cup}_{\mid m}=\bigcup_{n\in N}F^{(n\cdotp m)\cup}=\bigcup_{n\in N}F^{(k\cdotp l\cdotp n)\cup}\subset\bigcup_{\bar{n}\in N}F^{(\bar{n}\cdotp k)\cup}=F^{\infty\cup}_{\mid k}$ and $F^{m\cup}=F^{(l\cdotp k)\cup}\subset\bigcup_{n\in N}F^{(n\cdotp k)\cup}=F^{\infty\cup}_{\mid k}$.
\item It follows from the simple fact that $m\mid m$ and $1\mid m$.
\end{enumerate}
\end{proof}

That is why we are accustomed to operating on undirected multifunctions. We are going to prove K\"onig theorem in the language of multifunctions. We finish this topic by making an observation about strict and bipartite multifunctions.

\begin{lemma}
Let $F\colon V\leadsto V$ be a strict undirected multifunction and $U,W\in P(V)-\{\emptyset\}$. Then $F$ is $(U,W)-$bipartite iff $V=U\dot{\cup}W$ and for every $n\in N$ the following equalities are fulfilled$\colon\hspace{1pt}F^{2\cdotp n}_{-}(U)=U,\hspace{1pt}F^{2\cdotp n}_{-}(W)=W,\hspace{1pt}F^{2\cdotp n+1}_{-}(U)=W$ and $F^{2\cdotp n+1}_{-}(W)=U$.
\end{lemma}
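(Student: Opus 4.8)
The plan is to reduce everything to the earlier characterization of strict $(U,W)$-bipartiteness, which states that for a strict $F$ the multifunction $F$ is $(U,W)$-bipartite iff $V=U\dot{\cup}W$, $F_{-}(U)=W$ and $F_{-}(W)=U$. The content of the present statement is just that these two relations, which say that $F_{-}$ interchanges $U$ and $W$, propagate through the iterates $F^{k}_{-}$ with period two.

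For the implication from left to right I would first invoke that earlier lemma to obtain $F_{-}(U)=W$ and $F_{-}(W)=U$ from $(U,W)$-bipartiteness. Then I would prove by induction on the exponent $k\in N$ the combined statement that $F^{k}_{-}(U)=U$ and $F^{k}_{-}(W)=W$ when $k$ is even, while $F^{k}_{-}(U)=W$ and $F^{k}_{-}(W)=U$ when $k$ is odd. The base cases $k=0$, where $F^{0}_{-}=\text{id}$, and $k=1$, where $F^{1}_{-}=F_{-}$, are immediate. For the inductive step I use $F^{k+1}_{-}(U)=F_{-}(F^{k}_{-}(U))$: if $k$ is even then $F^{k}_{-}(U)=U$, so $F^{k+1}_{-}(U)=F_{-}(U)=W$, matching the odd case for $k+1$; if $k$ is odd then $F^{k}_{-}(U)=W$, so $F^{k+1}_{-}(U)=F_{-}(W)=U$, matching the even case. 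The argument for $W$ is identical. Reading this off at $k=2\cdotp n$ and $k=2\cdotp n+1$ gives the four advertised equalities.

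For the converse I would specialize the hypotheses to $n=0$: the odd equalities give $F^{1}_{-}(U)=F_{-}(U)=W$ and $F^{1}_{-}(W)=F_{-}(W)=U$, and together with $V=U\dot{\cup}W$ the earlier characterization yields at once that $F$ is $(U,W)$-bipartite. Thus the converse needs nothing beyond a single instance of the assumed equalities.

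I do not expect any genuine obstacle here; the whole point is that $F_{-}$ acts as the transposition of $U$ and $W$, so the only thing to get right is the bookkeeping of even versus odd exponents and the decision to run the induction on the iterate count $k$ rather than on $n$, which keeps the inductive step a single application of $F_{-}$. Strictness enters only through the earlier lemma (where it secures $F_{+}\subset F_{-}$), while undirectedness plays no role in the computation itself and is carried along merely because it is the standing assumption for the simple-graph multifunctions under study.
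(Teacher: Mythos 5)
Your proposal is correct and follows essentially the same route as the paper: both directions reduce to the earlier characterization of strict $(U,W)$-bipartiteness via $F_{-}(U)=W$ and $F_{-}(W)=U$, the forward direction is an induction propagating these two equalities through the iterates, and the converse uses only the $n=0$ instance. The only cosmetic difference is that you run a single parity-tracking induction on the exponent $k$ with one application of $F_{-}$ per step, whereas the paper inducts on $n$ over the even exponents (applying $F_{-}$ twice per step) and then derives the odd case in one further application.
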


\begin{proof}
Assume that $F$ is $(U,W)-$bipartite. Then $V=U\dot{\cup}W$ and $F_{-}(U)=W$ and $F_{-}(W)=U$. We prove the iteration inductively. For $n=0$ we calculate $F^{2\cdotp 0}_{-}(U)=F^{0}_{-}(U)=U$ and $F^{2\cdotp 0}_{-}(W)=F^{0}_{-}(W)=W$.
For $n=1$ we calculate $F^{2\cdotp 1}_{-}(U)=F^{2}_{-}(U)=F_{-}(F^{1}_{-}(U))=F_{-}(F_{-}(U))=F_{-}(W)=U$ and $F^{2\cdotp 1}_{-}(W)=F^{2}_{-}(W)=F_{-}(F^{1}_{-}(W))=F_{-}(F_{-}(W))=F_{-}(U)=W$.

Assume that $F^{2\cdotp n}_{-}(U)=U$. Then $F^{2\cdotp(n+1)}_{-}(U)=F^{2\cdotp n+2}_{-}(U)=$\newline$=F_{-}(F^{2\cdotp n+1}_{-}(U))=F_{-}(F_{-}(F^{2\cdotp n}_{-}(U)))=F_{-}(F_{-}(U))=F_{-}(W)=U$. Therefore $F^{2\cdotp n}_{-}(U)=U$ for each $n\in N$.

Assume that $F^{2\cdotp n}_{-}(W)=W$. Then $F^{2\cdotp(n+1)}_{-}(W)=F^{2\cdotp n+2}_{-}(W)=$\newline$=F_{-}(F^{2\cdotp n+1}_{-}(W))=F_{-}(F_{-}(F^{2\cdotp n}_{-}(W)))=F_{-}(F_{-}(W))=F_{-}(U)=W$. Therefore $F^{2\cdotp n}_{-}(W)=W$ for each $n\in N$.

Then $\forall n\in N\colon F^{2\cdotp n+1}_{-}(U)=F_{-}(F^{2\cdotp n}_{-}(U))=F_{-}(U)=W$ and\newline$\forall n\in N\colon F^{2\cdotp n+1}_{-}(W)=F_{-}(F^{2\cdotp n}_{-}(W))=F_{-}(W)=U$.

If $V=U\dot{\cup}W$ and for every $n\in N$ the equalities $F^{2\cdotp n+1}_{-}(U)=W$ and\newline$F^{2\cdotp n+1}_{-}(W)=U$ hold then $V=U\dot{\cup}W$ and $F_{-}(U)=W$ and $F_{-}(W)=U$ i.e. $F$ is bipartite.

\end{proof}

\begin{lemma}
Let $F\colon V\leadsto V$ be a strict and undirected multifunction, $U,W\in P(V)-\{\emptyset\}$ and $n\in N$. If $F$ is $(U,W)-$bipartite then $\forall(u,w)\in U\times W\colon u\notin F^{(2\cdotp n)\cup}(w)$ and $\forall u_{1},u_{2}\in U\colon u_{1}\notin F^{(2\cdotp n+1)\cup}(u_{2})$ and\newline$\forall w_{1},w_{2}\in W\colon w_{1}\notin F^{(2\cdotp n+1)\cup}(w_{2})$.

\end{lemma}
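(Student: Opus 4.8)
The plan is to reduce all three non-membership assertions to a single mechanism: the statement $u\notin F^{(2n)\cup}(w)$ is nothing but $F^{(2n)\cup}(w)\cap U=\emptyset$ once we know $u$ ranges over $U$, and such disjointness can be read off directly from the complete-preimage values that were already computed for a bipartite multifunction. The bridge between iterated images and complete preimages is the identity $F^{n\cup}_{-}=F^{n}_{-}$ proved earlier; unwinding the definition of $F_{-}$ (applied to the multifunction $G=F^{n\cup}$), this says precisely that $F^{n}_{-}(B)=\{v\in V\mid F^{n\cup}(v)\cap B\neq\emptyset\}$ for every $B\subset V$ and every $n\in N$. This is the only nontrivial input; everything else is bookkeeping with the partition $V=U\dot{\cup}W$.

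First I would invoke the preceding lemma: since $F$ is $(U,W)$-bipartite we have, for every $n\in N$, the equalities $F^{2n}_{-}(U)=U$, $F^{2n}_{-}(W)=W$, $F^{2n+1}_{-}(U)=W$ and $F^{2n+1}_{-}(W)=U$, together with $V=U\dot{\cup}W$. Combining the first equality with the bridge identity gives $\{v\in V\mid F^{(2n)\cup}(v)\cap U\neq\emptyset\}=U$. Hence for any $w\in W=V-U$ we have $w\notin U$, so $F^{(2n)\cup}(w)\cap U=\emptyset$; in particular $u\notin F^{(2n)\cup}(w)$ for every $u\in U$, which settles the first claim.

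The two remaining claims follow by the same argument applied to the odd iterations. From $F^{2n+1}_{-}(U)=W$ and the bridge identity I get $\{v\in V\mid F^{(2n+1)\cup}(v)\cap U\neq\emptyset\}=W$, so for any $u_{2}\in U=V-W$ we have $u_{2}\notin W$, whence $F^{(2n+1)\cup}(u_{2})\cap U=\emptyset$ and therefore $u_{1}\notin F^{(2n+1)\cup}(u_{2})$ for every $u_{1}\in U$. Symmetrically, from $F^{2n+1}_{-}(W)=U$ I obtain $\{v\in V\mid F^{(2n+1)\cup}(v)\cap W\neq\emptyset\}=U$, so for $w_{2}\in W$ we get $F^{(2n+1)\cup}(w_{2})\cap W=\emptyset$ and thus $w_{1}\notin F^{(2n+1)\cup}(w_{2})$ for every $w_{1}\in W$.

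I do not expect a genuine obstacle here: the entire proof is the repeated use of the schema ``if $G_{-}(S)=S$ then $x\notin S$ forces $G(x)\cap S=\emptyset$'', instantiated at $G=F^{(2n)\cup}$ and $G=F^{(2n+1)\cup}$. The only points demanding care are matching the parities correctly (even iterations fix $U$ and $W$ as preimages, odd iterations swap them) and using $V=U\dot{\cup}W$ in both directions, so that ``$w\notin U$'' is legitimately replaced by ``$w\in W$'' and vice versa.
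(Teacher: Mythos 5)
Your proposal is correct and follows essentially the same route as the paper: both invoke the preceding lemma giving $F^{2\cdot n}_{-}(U)=U$, $F^{2\cdot n+1}_{-}(U)=W$, $F^{2\cdot n+1}_{-}(W)=U$, use the identity $F^{n\cup}_{-}=F^{n}_{-}$ to pass to the iterated multifunction, and read off the non-membership claims from the disjointness $U\cap W=\emptyset$. The paper phrases this as $F^{(2\cdot n)\cup}_{-}(U)\cap W=U\cap W=\emptyset$ where you phrase it pointwise via $w\notin U$, but these are the same argument.
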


\begin{proof}
Assume that $F$ is $(U,W)-$bipartite. Then $\emptyset=U\cap W=F^{2\cdotp n}_{-}(U)\cap W=F^{(2\cdotp n)\cup}_{-}(U)\cap W$; or equivalently, $\forall(u,w)\in U\times W\colon u\notin F^{(2\cdotp n)\cup}(w)$. Notice that $\emptyset=U\cap W=U\cap F^{2\cdotp n+1}_{-}(U)=U\cap F^{(2\cdotp n+1)\cup}_{-}(U)$ iff $\forall u_{1},u_{2}\in U\colon u_{1}\notin F^{(2\cdotp n+1)\cup}(u_{2})$ and so $\emptyset=U\cap W=W\cap F^{2\cdotp n+1}_{-}(W)=W\cap F^{(2\cdotp n+1)\cup}_{-}(W)$ i.e. $\forall w_{1},w_{2}\in W\colon w_{1}\notin F^{(2\cdotp n+1)\cup}(w_{2})$. 
\end{proof}

\section{Connectedness}

Although there is a definition of connectedness based on the concept of path\cite{bollobas}, we will use the definition based on walks.
\begin{definition}\cite{jung}
Let $G=(V,E)$ be a simple graph and $u,w\in V$. Then $\{u,w\}$ is connected in $G$ iff $Walk_{u-w}(G\mid 1)\neq\emptyset$. We say that $G$ is connected iff all pairs of vertices of $G$ are connected in $G$.
\end{definition}
After translating this into multifunctions we get the following definition.
\begin{definition}
Let $F\colon V\leadsto V$ be a multifunction and $u,w\in V$. Then $(u,w)$ is connected in $F$ iff $Walk_{u\to w}(F,V\mid 1)\neq\emptyset$. We say that $F$ is connected iff\newline$\forall u,w\in V\colon Walk_{u\to w}(F,V\mid 1)\neq\emptyset$.

\end{definition}
We verify that this is good.

\begin{lemma}
Let $F\colon V\leadsto V$ be a simple graph multifunction and $u,w\in V$. Then $(u,w)$ is connected in $F$ iff $\{u,w\}$ is connected in $\tilde{G_{F}}=(V,\tilde{E_{F}})$ and $F$ is connected iff $\tilde{G_{F}}=(V,\tilde{E_{F}})$ is connected.

\end{lemma}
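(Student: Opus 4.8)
The plan is to derive this lemma as an immediate corollary of the walk-equivalence lemma proved earlier, whose part (iv) asserts $Walk_{u\to w}(F,V\mid m)=Walk_{u-w}((V,\tilde{E_{F}})\mid m)$ for every multifunction $F$, all $u,w\in V$ and all $m\in N$. Specializing to $m=1$ will convert each connectedness condition for $F$ directly into the corresponding condition for $\tilde{G_{F}}$. Before doing so I would note that the simplicity hypothesis is used exactly once: since $F$ is a simple graph multifunction, the earlier correspondence lemma guarantees that $\tilde{G_{F}}=(V,\tilde{E_{F}})$ is a genuine simple graph, so that the Definition of connectedness for simple graphs legitimately applies to it.

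First I would settle the pairwise claim. By definition $(u,w)$ is connected in $F$ iff $Walk_{u\to w}(F,V\mid 1)\neq\emptyset$. Applying part (iv) of the walk-equivalence lemma with $m=1$ gives $Walk_{u\to w}(F,V\mid 1)=Walk_{u-w}((V,\tilde{E_{F}})\mid 1)$, so the former set is nonempty precisely when the latter is. But nonemptiness of $Walk_{u-w}((V,\tilde{E_{F}})\mid 1)$ is, by definition, the statement that $\{u,w\}$ is connected in $\tilde{G_{F}}$. Chaining these equivalences yields that $(u,w)$ is connected in $F$ iff $\{u,w\}$ is connected in $\tilde{G_{F}}$.

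The global claim then follows by quantifying over all vertices. By definition $F$ is connected iff $\forall u,w\in V\colon Walk_{u\to w}(F,V\mid 1)\neq\emptyset$, that is, iff every ordered pair $(u,w)$ is connected in $F$; by the pairwise equivalence just established this holds iff every pair $\{u,w\}$ is connected in $\tilde{G_{F}}$, which is exactly the definition of $\tilde{G_{F}}$ being connected. I expect no real obstacle here, as the whole lemma reduces to the already-proved walk-equivalence lemma; the only points requiring care are that the earlier lemma was stated for an arbitrary multifunction and so applies verbatim to the present $F$, and that the switch from the ordered notation $Walk_{u\to w}$ to the unordered $Walk_{u-w}$ is harmless because it is already absorbed into the walk-equivalence lemma itself.
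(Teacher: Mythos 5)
Your proposal is correct and follows essentially the same route as the paper: the paper unfolds $Walk_{u\to w}(F,V\mid 1)=\bigcup_{n\in N}Walk_{u\to w}(F,V,n)$ and applies the per-$n$ equality $Walk_{u\to w}(F,V,n)=Walk_{u-w}((V,\tilde{E_{F}}),n)$ before reassembling the union, whereas you invoke part (iv) of the same walk-equivalence lemma, which already packages that union; the content is identical. The global claim is then obtained in both cases by quantifying the pairwise equivalence over all $u,w\in V$.
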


\begin{proof}
$(u,w)$ is connected in $F$ iff $\emptyset\neq Walk_{u\to w}(F,V\mid 1)=$\newline$=\bigcup_{n\in N}Walk_{u\to w}(F,V,n)$; or equivalently, there exists $n\in N$ such that $\emptyset\neq Walk_{u\to w}(F,V,n)=Walk_{u-w}((V,\tilde{E_{F}}),n)$. We may say equivalently that $\emptyset\neq\bigcup_{n\in N}Walk_{u-w}((V,\tilde{E_{F}}),n)=Walk_{u-w}((V,\tilde{E_{F}})\mid 1)=Walk_{u\to w}(\tilde{G_{F}}\mid 1)$ i.e. $\{u,w\}$ is connected in $\tilde{G_{F}}=(V,\tilde{E_{F}})$.

$F$ is connected iff $\forall u,w\in V\colon\emptyset\neq Walk_{u\to w}(F,V\mid 1)$; or equivalently, $\forall u,w\in V\colon\{u,w\}$ is connected in $\tilde{G_{F}}=(V,\tilde{E_{F}})$ i.e. $\tilde{G_{F}}=(V,\tilde{E_{F}})$ is connected.
\end{proof}

We operate everywhere with iterations and therefore we will describe connectedness by $F^{\infty}_{\mid m}$.

\begin{lemma}
Let $F,G\colon V\leadsto V$ be a simple graph multifunction, $m,k\in N$ and $u,w\in V$. Then$\colon$
\begin{enumerate}[(i)]
\item$(u,w)$ is connected in $F$ iff $u\in F^{\infty\cup}_{\mid 1}(w)$
\item$F$ is connected iff $V\in Clique(F^{\infty\cup}_{\mid 1},V)$
\item if $F$ is connected then $G$ is connected, assuming that $F\subset G$
\item if $F$ is connected then $F$ is strict
\item$(u,w)$ is connected in $F^{m\cup}$ iff $u\in F^{\infty\cup}_{\mid m}(w)$ 
\item$F^{m\cup}$ is connected iff $V\in Clique(F^{\infty}_{\mid m},V)$
\item if $F^{m\cup}$ is connected then $G^{m\cup}$ is connected, assuming that $F\subset G$
\item if $F^{m\cup}$ is connected then $F^{k\cup}$ is connected, assuming that $k\mid m$
\item if exists $n\in N$ such that $F^{n\cup}$ is connected then $F$ is connected
\item if $F$ is disconnected iff $F^{n\cup}$ is disconnected for each $n\in N$
\item if $F^{2\cup}$ is connected then $F$ is connected and not bipartite
\end{enumerate}

\end{lemma}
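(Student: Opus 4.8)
The plan is to read off items (i)--(x) from the iteration, walk, and clique machinery already built, and then to spend the real effort on (xi), whose nonbipartiteness half is the only genuinely new point.

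For the routine items I would fix $u,w\in V$ throughout. Item (i) is immediate from the walk description $F^{\infty\cup}_{\mid m}(w)=\{u\in V\mid Walk_{u\to w}(F,V\mid m)\neq\emptyset\}$ at $m=1$, since $(u,w)$ connected in $F$ means exactly $Walk_{u\to w}(F,V\mid 1)\neq\emptyset$. Item (ii) combines (i) with the clique characterization $V\in Clique(H,V)\Leftrightarrow\forall u,w\in V\colon u\in H(w)$ for $H=F^{\infty\cup}_{\mid 1}$. For (iii) I would use that $F\subset G$ forces $F^{\infty\cup}_{\mid 1}\subset G^{\infty\cup}_{\mid 1}$ together with monotonicity $Clique(F^{\infty\cup}_{\mid 1},V)\subset Clique(G^{\infty\cup}_{\mid 1},V)$. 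Item (iv) uses that $F$ connected makes $F^{\infty\cup}_{\mid 1}(w)=V\neq\emptyset$ for every $w$, so $F^{\infty\cup}_{\mid 1}$ is strict, and since $F$ is undirected we have $F^{\infty\cup}_{\mid 1}=F^{+\infty\cup}_{\mid 1}$, whence $F$ is strict. Items (v) and (vi) are (i) and (ii) applied to $F^{m\cup}$ after simplifying $(F^{m\cup})^{\infty\cup}_{\mid 1}=F^{\infty\cup}_{\mid m}$; item (vii) is (iii) applied to $F^{m\cup}\subset G^{m\cup}$; item (viii) uses $k\mid m\Rightarrow F^{\infty\cup}_{\mid m}\subset F^{\infty\cup}_{\mid k}$ and monotonicity of $Clique$; item (ix) is (viii) at $k=1$; and item (x) is the contrapositive of (ix) together with the trivial direction ($n=1$).

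The content of (xi) splits in two. Connectedness of $F$ is free: since $F^{2\cup}$ is connected, item (ix) at $n=2$ gives $F$ connected, and then item (iv) gives $F$ strict, while $F$ is undirected because it is a simple graph multifunction. For nonbipartiteness I would argue by contradiction. Suppose $F$ is $(U,W)$-bipartite for some $U,W\in P(V)-\{\emptyset\}$ with $V=U\dot{\cup}W$, and pick $u\in U$, $w\in W$. Connectedness of $F^{2\cup}$ with item (v) at $m=2$ yields $u\in F^{\infty\cup}_{\mid 2}(w)=\bigcup_{n\in N}F^{(2n)\cup}(w)$, so $u\in F^{(2n)\cup}(w)$ for some $n\in N$. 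But $F$ is strict and undirected, so the preceding even-iteration lemma for bipartite multifunctions forces $u\notin F^{(2n)\cup}(w)$ for every $n\in N$, a contradiction. Hence $F$ is not bipartite.

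The main obstacle is arranging the hypotheses of that even-iteration lemma before invoking it: it needs $F$ strict and undirected, yet strictness is not part of the definition of a simple graph multifunction. The resolution is sequencing--first downgrade connectedness of $F^{2\cup}$ to connectedness of $F$ via (ix), then extract strictness from (iv), with undirectedness coming for free. Once $F$ is strict, undirected, and (for contradiction) bipartite, the clash between ``some even walk joins a vertex of $U$ to a vertex of $W$'' (from $F^{2\cup}$ connected) and ``no even walk joins $U$ to $W$'' (from bipartiteness) closes the argument.
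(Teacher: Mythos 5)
Your proposal is correct and follows essentially the same route as the paper: items (i)--(x) are read off from the walk description of $F^{\infty\cup}_{\mid m}$, the identity $(F^{m\cup})^{\infty\cup}_{\mid k}=F^{\infty\cup}_{\mid k\cdotp m}$, monotonicity of $Clique$, and the strictness criterion for $F^{+\infty\cup}_{\mid 1}$, while (xi) is settled by first extracting connectedness and strictness of $F$ and then contradicting the even-iteration lemma for strict undirected bipartite multifunctions. Your explicit remark about sequencing the hypotheses (connected $\Rightarrow$ strict before invoking that lemma) matches the paper's "Then $F$ is connected and strict" step exactly.
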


\begin{proof}
Fix $u,w\in V$.

\begin{enumerate}[(i)]
\item$(u,w)$ is connected in $F$ iff $\emptyset\neq Walk_{u\to w}(F,V\mid 1)=$\newline$=\bigcup_{n\in N}Walk_{u\to w}(F,V,n)$; or equivalently, there exists $n\in N$ such that $Walk_{u\to w}(F,V,n)\neq\emptyset$. We may say equivalently that there exists $n\in N$ such that $u\in F^{n\cup}(w)$ i.e. $u\in\bigcup_{n\in N}F^{n\cup}(w)=F^{\infty\cup}_{\mid 1}(w)$.
\item$F$ is connected iff $\forall u,w\in V\colon u\in F^{\infty\cup}_{\mid 1}(w)$ i.e. $V\in Clique(F^{\infty\cup}_{\mid 1},V)$.
\item If $F\subset G$ then $F^{\infty\cup}_{\mid 1}=F^{+\infty\cup}_{\mid 1}\subset G^{+\infty\cup}_{\mid 1}=G^{\infty\cup}_{\mid 1}$ and so $Clique(F^{\infty\cup}_{\mid 1},V)\subset Clique(G^{\infty\cup}_{\mid 1},V)$. If $V\in Clique(F^{\infty\cup}_{\mid 1},V)$ then $V\in Clique(G^{\infty\cup}_{\mid 1},V)$; or equivalently, if $F$ is connected then $G$ is connected.  
\item$F$ is connected iff $\text{const}^{V}=F^{\infty\cup}_{\mid 1}$. Then $(F^{\infty\cup}_{\mid 1})_{+}(\emptyset)=(F^{+\infty\cup}_{\mid 1})_{+}(\emptyset)=(\text{const}^{V})_{+}(\emptyset)=\emptyset$ i.e. $F^{\infty\cup}_{\mid 1}$ is strict; or equivalently, $F$ is strict.
\item$(u,w)$ is connected in $F^{m\cup}$ iff $u\in(F^{m\cup})^{\infty\cup}_{\mid 1}(w)=F^{\infty\cup}_{\mid m}(w)$
\item$F^{m\cup}$ is connected iff $\forall u,w\in V\colon u\in F^{\infty\cup}_{\mid m}(w)$; or equivalently,\newline$V\in Clique(F^{\infty\cup}_{\mid m},V)$.
\item If $F\subset G$ then $F^{\infty\cup}_{\mid m}=F^{+\infty\cup}_{\mid m}\subset G^{+\infty\cup}_{\mid m}=G^{\infty\cup}_{\mid m}$ and so\newline$Clique(F^{\infty\cup}_{\mid m},V)\subset Clique(G^{\infty\cup}_{\mid m},V)$. If $V\in Clique(F^{\infty\cup}_{\mid m},V)$ then $V\in Clique(G^{\infty\cup}_{\mid m},V)$; or equivalently, if $F^{m\cup}$ is connected then $G^{m\cup}$ is connected.  
\item If $k\mid m$ then $F^{\infty\cup}_{\mid m}\subset F^{\infty\cup}_{\mid k}$ and so $Clique(F^{\infty\cup}_{\mid m},V)\subset Clique(F^{\infty\cup}_{\mid k},V)$. If $V\in Clique(F^{\infty\cup}_{\mid m},V)$ then $V\in Clique(F^{\infty\cup}_{\mid k},V)$; or equivalently, if $F^{m\cup}$ is connected then $F^{k\cup}$ is connected.
\item Assume that there exists $n\in N$ such that $F^{n\cup}$ is connected. Then $F=F^{1\cup}$ is connected because $1\mid n$.
\item If $F^{n\cup}$ is disconnected for each $n\in N$ then $F=F^{1\cup}$ is disconnected. But if $F$ is disconnected then $V\notin Clique(F^{\infty\cup}_{\mid 1},V)$. Notice that $F^{\infty\cup}_{\mid n}\subset F^{\infty\cup}_{\mid 1}$ for each $n\in N$. Then $Clique(F^{\infty\cup}_{\mid n},V)\subset Clique(F^{\infty\cup}_{\mid 1},V)$ for each $n\in N$. Thus $V\notin Clique(F^{\infty\cup}_{\mid n},V)$ for each $n\in N$ i.e. $F^{n\cup}$ is disconnected for each $n\in N$.
\item Assume that $F^{2\cup}$ is connected. Then $F$ is connected and strict. Then also $\forall u,w\in V\colon u\in F^{\infty\cup}_{\mid 2}(w)=\bigcup_{n\in N}F^{(2\cdotp n)\cup}(w)$. Assume that there exist $U,W\in P(V)-\{\emptyset\}$ such that $F$ is $(U,W)-$bipartite i.e. $F$ is bipartite. Let $u\in U$ and $w\in W$. Then $u\in\bigcup_{n\in N}F^{(2\cdotp n)\cup}(w)$ i.e. $\exists n\in N\colon u\in F^{(2\cdotp n)\cup}(w)$. But for every strict and bipartite simple graph multifunction $\forall n\in N\colon u\notin F^{(2\cdotp n)\cup}(w)$, contradiction. 
\end{enumerate}

\end{proof}

Here we prove the K\"onig theorem for multifunctions.

\begin{theorem}
Let $F\colon V\leadsto V$ be a connected simple graph multifunction. Then $F$ is bipartite iff for every $n\in N$ and every $v\in V$ the formula $v\notin F^{(2\cdotp n+1)\cup}(v)$ is fulfilled.

\end{theorem}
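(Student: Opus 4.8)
The implication ``$F$ bipartite $\Rightarrow$ no odd closed walk'' is almost immediate from Lemma 4.13. Since $F$ is a connected simple graph multifunction, it is undirected (Definition 3.2) and strict (Lemma 5.4(iv)), so Lemma 4.13 applies: if $F$ is $(U,W)$-bipartite, then $u_1\notin F^{(2n+1)\cup}(u_2)$ for all $u_1,u_2\in U$ and $w_1\notin F^{(2n+1)\cup}(w_2)$ for all $w_1,w_2\in W$. Given any $v\in V=U\dot{\cup}W$, I would take $u_1=u_2=v$ when $v\in U$ and $w_1=w_2=v$ when $v\in W$, obtaining $v\notin F^{(2n+1)\cup}(v)$ for every $n\in N$.

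\textbf{Converse: building the bipartition.}
For the converse I would construct the bipartition explicitly by parity of distance from a base point. Fix $v_0\in V$ and let $U$ be the set of vertices joined to $v_0$ by a walk of even length and $W$ the set joined by a walk of odd length; by Lemma 4.7 and undirectedness these are $\bigcup_{n}F^{(2n)\cup}(v_0)$ and $\bigcup_{n}F^{(2n+1)\cup}(v_0)$. Connectedness forces $V=U\cup W$: for each $v$ the pair $(v,v_0)$ is connected, so $v\in F^{n\cup}(v_0)$ for some $n$ (Lemma 4.7), placing $v$ in $U$ or $W$ according to the parity of $n$. Both blocks are nonempty, since $v_0\in F^{2\cup}(v_0)\subseteq U$ (Lemma 4.10(ii) says $F^{2\cup}$ is everywhereloop) and, $F$ being strict, $\emptyset\neq F(v_0)=F^{1\cup}(v_0)\subseteq W$.

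\textbf{The crux: disjointness and independence.}
The heart of the argument is $U\cap W=\emptyset$, and this is the only place the hypothesis is used. Suppose $v\in U\cap W$, so there are walks $v_0\to v$ of lengths $2i$ and $2j+1$. Because $F=F^{-1}$, Lemma 3.17(vi) reverses the odd walk into a walk $v\to v_0$ of length $2j+1$, and Lemma 3.17(vii) concatenates it after the even walk to give a walk $v_0\to v_0$ of length $2i+2j+1$; by Lemma 4.7 this reads $v_0\in F^{(2i+2j+1)\cup}(v_0)$, contradicting the assumption. Once disjointness is known, independence of each block follows from it directly. If $u_1,u_2\in U$ and $u_1\in F(u_2)$, then undirectedness gives $u_2\in F(u_1)$, so appending this edge to an even walk $v_0\to u_2$ produces an odd walk $v_0\to u_1$, forcing $u_1\in U\cap W$, which is impossible; the case $w_1,w_2\in W$ is symmetric, the edge converting an odd walk into an even one. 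Hence $U,W\in\mathit{Ind}(F,V)$ by Lemma 3.21(iv), and with $V=U\dot{\cup}W$ this exhibits $F$ as $(U,W)$-bipartite.

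\textbf{Main obstacle.}
I expect the disjointness step to be the only real obstacle: it is where the odd-walk hypothesis enters, and it requires assembling the reversal and concatenation facts for walks (Lemma 3.17) together with the walk--iteration dictionary (Lemma 4.7), all of which lean on $F$ being undirected. Everything else---the covering $V=U\cup W$, nonemptiness, and independence---is cheap bookkeeping once $U\cap W=\emptyset$ is in hand.
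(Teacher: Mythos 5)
Your proof is correct and follows essentially the same route as the paper: the forward direction is the specialization of Lemma 4.13 to $u_{1}=u_{2}=v$, and the converse builds the bipartition from the parity classes of walks based at a fixed vertex, extracting an odd closed walk from any violation via the concatenation and reversal facts of Lemma 3.17 and the walk--iteration dictionary of Lemma 4.7. The only organizational difference is that you take $W$ to be the odd-reachable set and invoke the hypothesis once (to get $U\cap W=\emptyset$), then deduce independence of each block from disjointness, whereas the paper sets $W=V-U$ (so disjointness is free) and invokes the hypothesis separately for the independence of each block; you also verify nonemptiness of $U$ and $W$ explicitly, a point the paper leaves implicit.
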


\begin{proof}
Assume that $F$ is bipartite i.e. there exist $U,W\in P(V)-\{\emptyset\}$ such that $F$ is $(U,W)-$bipartite. Without loss of generality we assume that there exist $u\in U$ and $n\in N$ such that $u\in F^{(2\cdotp n+1)\cup}(u)$. But $\forall n\in N\colon\forall u\in U\colon u\notin F^{(2\cdotp n+1)\cup}(u)$, contradiction. 

Assume that $\forall v\in V\colon\forall n\in N\colon v\notin F^{(2\cdotp n+1)\cup}(v)$. Fix $v\in V$. Denote $U=F^{\infty\cup}_{\mid 2}(v)$ and $W\colon=F^{\infty\cup}_{\neg\mid 2}(v)=(F^{\infty\cup}_{\mid 1}-F^{\infty\cup}_{\mid 2})(v)=F^{\infty\cup}_{\mid 1}(v)-F^{\infty\cup}_{\mid 2}(v)$. $V=\text{const}^{V}(v)=F^{\infty\cup}_{\mid 1}(v)=U\dot{\cup}W$ because $F$ is connected. Assume that $\emptyset\neq U\cap F_{-}(U)$. Notice that $\emptyset\neq F^{\infty\cup}_{\mid 2}(v)\cap F_{-}(F^{\infty\cup}_{\mid 2}(v))$ iff there exist $u,w\in V$ such that $u\in F(w),w\in F^{\infty\cup}_{\mid 2}(v)=U$ and $v\in F^{\infty\cup}_{\mid 2}(u)$. We may say equivalently that there exist $n,m\in N$ and $u,w\in V$ such that $u\in F(w),w\in F^{(2\cdotp n)\cup}(v)$ and $v\in F^{(2\cdotp m)\cup}(u)$. Then there exist $u,w\in V$ and $k\in N,k=m+n$ such that $u\in F(w)$ and $w\in F^{(2\cdotp n+2\cdotp m)\cup}(u)=F^{(2\cdotp(n+m))\cup}(u)=F^{(2\cdotp k)\cup}(u)$. Then there exist $u\in V$ and $k\in N$ such that $u\in F^{(2\cdotp k+1)\cup}(u)$, contradiction. We do the same for the set $W$. Indeed, notice that $\emptyset\neq F^{\infty\cup}_{\neg\mid 2}(v)\cap F_{-}(F^{\infty\cup}_{\neg\mid 2}(v))$ iff there exist $u,w\in V$ such that $u\in F(w),w\in F^{\infty\cup}_{\neg\mid 2}(v)$ and $v\in F^{\infty\cup}_{\neg\mid 2}(u)$. We may say equivalently that there exist $n,m\in N$ and $u,w\in V$ such that $u\in F(w),w\in F^{(2\cdotp n+1)\cup}(v)$ and $v\in F^{(2\cdotp m+1)\cup}(u)$. Then there exist $u,w\in V$ and $k\in N,k=m+n+1$ such that $u\in F(w)$ and $w\in F^{(2\cdotp n+1+2\cdotp m+1)\cup}(u)=F^{(2\cdotp(n+m+1))\cup}(u)=F^{(2\cdotp k)\cup}(u)$. Then there exist $u\in V$ and $k\in N$ such that $u\in F^{(2\cdotp k+1)\cup}(u)$, contradiction.
\end{proof}

Here we go to the main result. We need a simple fact from the theory of numbers.
\begin{lemma}
If $a,b,c\in N$ then either $2\mid(a+b+2\cdotp c+1)$ or $2\mid(a+b+2\cdotp(2\cdotp c+1))$.
\end{lemma}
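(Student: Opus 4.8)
The plan is to reduce the entire question modulo $2$. The key observation is that both summands $2\cdotp c$ and $2\cdotp(2\cdotp c+1)=4\cdotp c+2$ are even, so they contribute nothing to the parity. Hence the two candidate numbers simplify to
\[
a+b+2\cdotp c+1\equiv a+b+1\pmod 2\quad\text{and}\quad a+b+2\cdotp(2\cdotp c+1)\equiv a+b\pmod 2 .
\]

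The crucial point is then that $a+b$ and $a+b+1$ are consecutive integers and therefore of opposite parity, so exactly one of them is divisible by $2$. This already forces at least one of the two original expressions to be even, which is precisely the disjunction we must establish.

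To phrase this cleanly I would argue by cases on the parity of $a+b$. If $a+b$ is even then $a+b+2\cdotp(2\cdotp c+1)$ is even, so the second alternative holds; if $a+b$ is odd then $a+b+1$ is even, so $a+b+2\cdotp c+1$ is even and the first alternative holds. In either case the required disjunction is satisfied, and in fact exactly one of the two alternatives occurs.

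I do not anticipate a real obstacle here, since this is a pure parity computation; the only point deserving a moment's care is verifying that $2\cdotp(2\cdotp c+1)$ is genuinely even (it equals $4\cdotp c+2$), which is what licenses dropping it modulo $2$. This lemma is evidently meant as the arithmetic engine behind the even iterations theorem, where concatenating walks whose lengths have these two shapes must always produce a walk of even length, thereby closing the loop on the characterization of bipartiteness.
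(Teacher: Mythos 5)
Your proof is correct and follows essentially the same route as the paper: both reduce the two divisibility conditions modulo $2$ to the parity of $a+b+1$ and of $a+b$ respectively, and then invoke the fact that one of two consecutive integers is even. If anything, your version is slightly more complete, since the paper states the two equivalences and leaves the final "consecutive integers have opposite parity" step implicit, whereas you spell it out with an explicit case split on the parity of $a+b$.
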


\begin{proof}
Fix $a,b,c\in N$. Then $2\mid(a+b+2\cdotp c+1)$ iff there exists $q\in Z$ such that $a+b+1+2\cdotp c=2\cdotp q$; or equivalently, there exists $q\in Z$ such that $2\cdotp(q-c)=a+b+1$ i.e. $a+b+1$ is even. Notice that $2\mid(a+b+2\cdotp(2\cdotp c+1))$ iff there exists $q\in Z$ such that
$a+b+2\cdotp(2\cdotp c+1)=2\cdotp q$; or equivalently, there exists $q\in Z$ such that $2\cdotp(q-2\cdotp c-1)=a+b$ i.e. $a+b$ is even.
\end{proof}

\begin{lemma}
Let $F\colon V\leadsto V$ be a connected simple graph multifunction. If $F$ is not bipartite then $F^{2\cup}$ is connected.
\end{lemma}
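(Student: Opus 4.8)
The plan is to extract an odd closed walk from the failure of bipartiteness and then use it as a parity switch that upgrades any connecting walk to one of even length.

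First I would invoke the K\"onig theorem for multifunctions: since $F$ is a connected simple graph multifunction that is \emph{not} bipartite, there exist $n_{0}\in N$ and $v_{0}\in V$ with $v_{0}\in F^{(2\cdot n_{0}+1)\cup}(v_{0})$, i.e. $Walk_{v_{0}\to v_{0}}(F,V,2\cdot n_{0}+1)\neq\emptyset$. Concatenating this closed walk with itself (the concatenation property of walks) also yields $Walk_{v_{0}\to v_{0}}(F,V,2\cdot(2\cdot n_{0}+1))\neq\emptyset$, so $v_{0}$ carries closed walks of both the odd length $2\cdot n_{0}+1$ and the even length $2\cdot(2\cdot n_{0}+1)$.

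Next I would fix arbitrary $u,w\in V$. Because $F$ is connected there are $a,b\in N$ with $Walk_{u\to v_{0}}(F,V,a)\neq\emptyset$ and $Walk_{v_{0}\to w}(F,V,b)\neq\emptyset$. Here the preceding number-theoretic lemma does the essential work: applied with $c=n_{0}$ it gives that either $2\mid(a+b+2\cdot n_{0}+1)$ or $2\mid(a+b+2\cdot(2\cdot n_{0}+1))$. In the first case I concatenate the walk $u\to v_{0}$ of length $a$, the odd closed walk at $v_{0}$, and the walk $v_{0}\to w$ of length $b$; in the second case I insert the doubled closed walk instead. In either case the concatenation property produces a walk from $u$ to $w$ of even total length, so $Walk_{u\to w}(F,V\mid 2)\neq\emptyset$ and hence $u\in F^{\infty\cup}_{\mid 2}(w)$.

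Since $u,w$ were arbitrary, this shows $\forall u,w\in V\colon u\in F^{\infty\cup}_{\mid 2}(w)$, i.e. $V\in Clique(F^{\infty\cup}_{\mid 2},V)$, which by the connectedness characterization for iterates is exactly the assertion that $F^{2\cup}$ is connected. The only delicate point, and the reason the number-theoretic lemma is isolated beforehand, is the parity bookkeeping: a single connecting walk $u\to v_{0}\to w$ of length $a+b$ may already be odd, so one must be free to insert the odd cycle either once or twice, and the lemma certifies that exactly one of these two choices forces the total length to be even. Everything else is routine gluing of walks via the established concatenation property.
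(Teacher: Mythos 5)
Your proposal is correct and follows essentially the same route as the paper: extract an odd closed walk from the failure of bipartiteness via the K\"onig theorem for multifunctions, route an arbitrary pair $u,w$ through that vertex using connectedness, and apply the same number-theoretic parity lemma to insert the odd cycle once or twice so that the total length becomes even. The only cosmetic difference is that the paper first disposes separately of the case where a direct even-length walk from $u$ to $w$ already exists, a case your argument subsumes by always routing through $v_{0}$.
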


\begin{proof}

Assume that $F$ is connected and $F$ is not bipartite. Fix $u,w\in V,u\neq w$. Then $u\in F^{\infty\cup}_{\mid 1}(w)=\bigcup_{n\in N}F^{n\cup}(w)$ i.e. there exists $n\in N$ such that $u\in F^{n\cup}(w)$. But either $2\mid n$ or not $2\mid n$. If $2\mid n$ then $F^{n\cup}\subset F^{\infty\cup}_{\mid 2}$ and therefore $u\in F^{\infty\cup}_{\mid 2}(w)$. Suppose that not $2\mid n$. Here we use the assumption that $F$ is not bipartite. Then there exist $v\in V$ and $c\in N$ such that $v\in F^{(2\cdotp c+1)\cup}(v)$. For every $c\in N,v\in V$ the formula $v\in F^{(2\cdotp(2\cdotp c+1))\cup}(v)$ holds. Then there exist $v\in V$ and $c\in N$ such that $v\in F^{(2\cdotp c+1)\cup}(v)$ and $v\in F^{(2\cdotp(2\cdotp c+1))\cup}(v)$. But $F$ is connected and so $u\in F^{\infty\cup}_{\mid 1}(v)$ and $v\in F^{\infty\cup}_{\mid 1}(w)$ i.e. there exist $a,b\in N$ such that $u\in F^{a\cup}(v)$ and $v\in F^{b\cup}(w)$. Then $u\in F^{(a+b+2\cdotp c+1)\cup}(w)$ and $u\in F^{(a+b+2(2\cdotp c+1))\cup}(w)$. But either $2\mid(a+b+2\cdotp c+1)$ or $2\mid(a+b+2\cdotp(2\cdotp c+1))$. If $2\mid(a+b+2\cdotp c+1)$ then $F^{(a+b+2\cdotp c+1)\cup}\subset F^{\infty\cup}_{\mid 2}$ and if $2\mid(a+b+2\cdotp(2\cdotp c+1))$ then $F^{(a+b+2\cdotp(2\cdotp c+1))\cup}\subset F^{\infty\cup}_{\mid 2}$. Therefore either $u\in F^{(a+b+2\cdotp c+1)\cup}(w)\subset F^{\infty\cup}_{\mid 2}(w)$ or $u\in F^{(a+b+2(2\cdotp c+1))\cup}(w)\subset F^{\infty\cup}_{\mid 2}(w)$. Then $u\in F^{\infty\cup}_{\mid 2}(w)$. $F^{2\cup}$ is connected because $u,w\in V$ are arbitrary.

\end{proof}

Taking it all together we have the fundamental theorem.
\begin{theorem}
Let $F\colon V\leadsto V$ be a connected simple graph multifunction. Then $F$ is not bipartite iff $F^{2\cup}$ is connected. Furthermore $F$ is bipartite iff for every $n\in N$ the multifunction $F^{(2\cdotp n)\cup}$ is disconnected.
\end{theorem}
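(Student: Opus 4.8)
The plan is to reduce both equivalences to facts already established for connectedness and iterations, so that essentially no fresh computation is required; the only real content is assembling the logical chain correctly.

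First I would settle the equivalence that $F$ is not bipartite iff $F^{2\cup}$ is connected. The forward implication is exactly the immediately preceding lemma: since $F$ is a connected simple graph multifunction, if $F$ is not bipartite then $F^{2\cup}$ is connected. The reverse implication is part (xi) of the connectedness lemma, which states that whenever $F^{2\cup}$ is connected, $F$ is connected and not bipartite; in particular $F$ is not bipartite. Together these yield the stated equivalence, and this already proves the first assertion of the theorem. Note that the hypothesis that $F$ itself is connected is used in the forward direction.

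Next I would obtain the ``Furthermore'' clause by negating this first equivalence and interpolating the higher even iterations. The negation of ``for every $n\in N$ the multifunction $F^{(2\cdotp n)\cup}$ is disconnected'' is ``there exists $n\in N$ with $F^{(2\cdotp n)\cup}$ connected,'' while the negation of ``$F$ is bipartite'' is ``$F$ is not bipartite.'' Hence it suffices to prove
\[
F^{2\cup}\text{ is connected}\iff\exists\,n\in N\colon F^{(2\cdotp n)\cup}\text{ is connected}
\]
and then combine with the first equivalence. The implication $(\Rightarrow)$ is immediate on taking $n=1$, since $F^{(2\cdotp 1)\cup}=F^{2\cup}$. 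For $(\Leftarrow)$, suppose $F^{(2\cdotp n)\cup}$ is connected for some $n\in N$. Because $2\mid 2\cdotp n$, part (viii) of the connectedness lemma (if $F^{m\cup}$ is connected and $k\mid m$ then $F^{k\cup}$ is connected), applied with $m=2\cdotp n$ and $k=2$, gives that $F^{2\cup}$ is connected.

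Finally I would assemble the pieces. By the reduction just proved together with the first assertion, the statement ``some $F^{(2\cdotp n)\cup}$ is connected'' is equivalent to ``$F^{2\cup}$ is connected,'' which in turn is equivalent to ``$F$ is not bipartite.'' Negating both ends, ``every $F^{(2\cdotp n)\cup}$ is disconnected'' is equivalent to ``$F$ is bipartite,'' which is precisely the desired clause. I do not anticipate a genuine obstacle, since all the substantive work---in particular the number-theoretic parity argument forcing $F^{2\cup}$ to be connected once $F$ fails to be bipartite---has already been carried out in the preceding lemmas. The one point requiring care is keeping the two layers of negation straight when passing from ``not bipartite iff some even iteration connected'' to ``bipartite iff all even iterations disconnected.''
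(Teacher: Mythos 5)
Your proof is correct and follows essentially the same route as the paper: the first equivalence is assembled from the same two lemmas (the preceding ``not bipartite implies $F^{2\cup}$ connected'' lemma and part (xi) of the connectedness lemma), and the ``Furthermore'' clause is then reduced to that first equivalence. The only cosmetic difference is that you handle the even iterations via the divisibility statement (viii) together with a double negation, whereas the paper applies part (x) to $F^{2\cup}$ using the identity $F^{(2\cdot n)\cup}=(F^{2\cup})^{n\cup}$; both reductions are equally valid.
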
 

\begin{proof}
For every connected simple graph multifunction $F\colon V\leadsto V$ if $F$ is not bipartite then $F^{2\cup}$ is connected. For every simple graph multifunction $F\colon V\leadsto V$ if $F^{2\cup}$ is connected then $F$ is connected and not bipartite. Thus we have equivalence $F$ is bipartite iff $F^{2\cup}$ is disconnected, assuming that $F$ is connected. But $F^{(2\cdotp n)\cup}=(F^{2\cup})^{n\cup}$ for each $n\in N$. Notice that $F$ is disconnected iff $F^{n\cup}$ is disconnected for each $n\in N$. Substituting $F^{2\cup}$ for $F$ we have the last equivalence. 
\end{proof}

\section{Filters and ideals of multifunctions}

\subsection{Families of subsets}
We considered the families of subsets of the set of vertices $V$. So far we have been working on $Ind(F,V)$. This family is neither a filter nor an ideal. 
\begin{lemma}
Let $F\colon V\leadsto V$ be a multifunction and $U,W\in P(V)$. Then$\colon$
\begin{enumerate}[(i)]
\item$\emptyset\in Ind(F,V)$
\item$V\in Ind(F,V)$ iff $F$ is trivial
\item if $U\subset W$ and $W\in Ind(F,V)$ then $U\in Ind(F,V)$
\item if $U\in Ind(F,V)$ then $U\cap W\in Ind(F,V)$
 
\end{enumerate}

\end{lemma}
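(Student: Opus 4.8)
The plan is to reduce every part to the defining condition $U\in Ind(F,V)\Leftrightarrow U\cap F_{-}(U)=\emptyset$, together with the elementary monotonicity of the complete preimage in its set argument: if $A\subset B$ then $F_{-}(A)\subset F_{-}(B)$, which is immediate since $F(x)\cap A\neq\emptyset$ forces $F(x)\cap B\neq\emptyset$. I would record this monotonicity first, as it is what drives parts (iii) and (iv); it is worth noting explicitly because the introductory list only gives monotonicity of $F_{-}$ in the multifunction ($F\subset G\Rightarrow F_{-}\subset G_{-}$), not in the set.

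For (i) I would simply observe that $\emptyset\cap F_{-}(\emptyset)=\emptyset$, so the defining condition holds vacuously; one may alternatively invoke $F_{-}(\emptyset)=\emptyset$ from Lemma 2.3(ii). For (ii) the key remark is that $F_{-}(V)\subset V$, whence $V\cap F_{-}(V)=F_{-}(V)$; thus $V\in Ind(F,V)$ iff $F_{-}(V)=\emptyset$, and by Lemma 2.3(viii) (applied with $Y=V$) this is precisely the statement that $F$ is trivial.

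For (iii), assuming $U\subset W$ and $W\cap F_{-}(W)=\emptyset$, I would apply the monotonicity above to get $F_{-}(U)\subset F_{-}(W)$, so that $U\cap F_{-}(U)\subset W\cap F_{-}(W)=\emptyset$; hence $U\in Ind(F,V)$. Part (iv) is then an immediate corollary: since $U\cap W\subset U$ and $U\in Ind(F,V)$, applying (iii) with $U\cap W$ in the role of the smaller set and $U$ in the role of the independent set yields $U\cap W\in Ind(F,V)$.

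I expect no genuine obstacle here. The only point requiring a moment's care is isolating the monotonicity of $F_{-}$ in its argument as a separate observation; once that is in place, (i) and (ii) are direct computations via the relevant items of Lemma 2.3, and (iii)–(iv) follow formally, with (iv) needing nothing beyond (iii).
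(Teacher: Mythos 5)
Your proposal is correct and follows essentially the same route as the paper: reduce everything to $U\cap F_{-}(U)=\emptyset$ and use monotonicity of $F_{-}$ in its set argument, with (i) and (ii) handled exactly as in the paper's proof. The only (harmless) divergence is in (iv), where you deduce it from (iii) via $U\cap W\subset U$, while the paper instead recomputes directly using $(U\cap W)\cap F_{-}(U\cap W)\subset (U\cap F_{-}(U))\cap(W\cap F_{-}(W))$; your shortcut is, if anything, cleaner.
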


\begin{proof}
It is easily seen that$\colon$
\begin{enumerate}[(i)]
\item$\emptyset=\emptyset\cap F_{-}(\emptyset)$ iff $\emptyset\in Ind(F,V)$.
\item$V\in Ind(F,V)$ iff $V\cap F_{-}(V)=\emptyset$; or equivalently, $F_{-}(V)=\emptyset$ i.e. $F$ is trivial.
\item Assume that $U\subset W$ and $W\in Ind(F,V)$. Then $U\subset W$ and $W\cap F_{-}(W)=\emptyset$. But $F_{-}(U)\subset F_{-}(W)$ and so $U\cap F_{-}(U)\subset W\cap F_{-}(U)\subset W\cap F_{-}(W)$ i.e. $U\in Ind(F,V)$.
\item If $U\in Ind(F,V)$ then $\emptyset=\emptyset\cap W\cap F_{-}(W)=U\cap F_{-}(U)\cap W\cap F_{-}(W)=(U\cap W)\cap(F_{-}(U)\cap F_{-}(W))\supset(U\cap W)\cap F_{-}(U\cap W)$ and so $\emptyset=(U\cap W)\cap F_{-}(U\cap W)$ i.e. $U\cap W\in Ind(F,V)$. 

\end{enumerate}
\end{proof}
However there are filters and ideals\cite{comfort} in graph theory.

\begin{definition}
Let $F\colon V\leadsto V$ be a multifunction, $A\in P(V)$ and $\kappa$ be a cardinal. Then$\colon$

\begin{itemize}
\item$Neigh_{A}(F,V)=\{U\in P(V)\mid F_{-}(U)\subset A\}$ is the family of sets with neighbors from $A$ 
\item$Wall_{A}(F,V)=\{U\in P(V)\mid A\subset F_{+}(U)\}$ is the family of sets that are like a wall for $A$  
\item$Neigh_{\kappa}(F,V)=\{U\in P(V)\mid\sharp F_{-}(U)<\kappa\}\subset P(V)$ is the family of sets with number of neighbors less then $\kappa$
\item$Wall_{\kappa}(F,V)=\{U\in P(V)\mid\sharp F_{+}(U)^{c}<\kappa\}$ is the family of sets that are complements of above
\item$Isol(F,V)=Neigh_{\emptyset}(F,V)$ is the set of isolated sets
\item$Build(F,V)=Wall_{V}(F,V)$ is the set of nonisolated sets i.e. sets forming some structures
 
\end{itemize}

\end{definition}
If we substitute $F=\{\cdotp\}$ then $Neigh_{A}(F,V)=P(A),\hspace{1pt}Wall_{A}(F,V)=\{U\in P(V)\mid A\subset U\},\hspace{1pt}Neigh_{\kappa}(F,V)=\{U\in P(V)\mid\sharp U<\kappa\},\hspace{1pt}Wall_{\kappa}(F,V)=\{U\in P(V)\mid\sharp U^{c}<\kappa\}$. These are all principal, cofinite and finite ideals and filters over sets.  
\begin{lemma}
Let $F\colon V\leadsto V$ be a multifunction, $A\in P(V)$ and $\kappa$ be a cardinal. Then $Neigh_{\kappa}(F,V)$ and $Neigh_{A}(F,V)$ are ideals.

\end{lemma}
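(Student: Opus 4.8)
The plan is to check the three defining properties of an ideal on $P(V)$ — that the family is downward closed, contains $\emptyset$, and is closed under finite unions — for each of $Neigh_{A}(F,V)$ and $Neigh_{\kappa}(F,V)$. The whole argument is driven by two elementary features of the complete preimage $F_{-}$: its monotonicity ($U\subset W\Rightarrow F_{-}(U)\subset F_{-}(W)$) and its distributivity over unions ($F_{-}(U\cup W)=F_{-}(U)\cup F_{-}(W)$). Both follow at once from the identity $F_{-}=(F^{-1})_{\cup}$ of Lemma 2.3(iii), since $(F^{-1})_{\cup}(B)=\bigcup_{y\in B}F^{-1}(y)$ visibly grows with $B$ and splits a union of index sets into the union of the images.

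First I would dispose of $Neigh_{A}(F,V)=\{U\in P(V)\mid F_{-}(U)\subset A\}$. Since $F_{-}(\emptyset)=\emptyset\subset A$ by Lemma 2.3(ii), we get $\emptyset\in Neigh_{A}(F,V)$. If $U\subset W$ and $W\in Neigh_{A}(F,V)$, monotonicity gives $F_{-}(U)\subset F_{-}(W)\subset A$, so $U\in Neigh_{A}(F,V)$ and the family is downward closed. Finally, if $U,W\in Neigh_{A}(F,V)$ then distributivity yields $F_{-}(U\cup W)=F_{-}(U)\cup F_{-}(W)\subset A\cup A=A$, so $U\cup W\in Neigh_{A}(F,V)$. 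This settles the principal case, and in particular $Isol(F,V)=Neigh_{\emptyset}(F,V)$.

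Next I would treat $Neigh_{\kappa}(F,V)=\{U\in P(V)\mid\sharp F_{-}(U)<\kappa\}$. Membership of $\emptyset$ is immediate from $\sharp F_{-}(\emptyset)=\sharp\emptyset=0<\kappa$, and downward closure follows because monotonicity forces $\sharp F_{-}(U)\le\sharp F_{-}(W)<\kappa$ whenever $U\subset W\in Neigh_{\kappa}(F,V)$. The one step that is not purely formal is closure under union: using distributivity, $\sharp F_{-}(U\cup W)=\sharp\bigl(F_{-}(U)\cup F_{-}(W)\bigr)\le\sharp F_{-}(U)+\sharp F_{-}(W)$, and one must argue this bound stays below $\kappa$.

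This last estimate is exactly where cardinal arithmetic enters, and it is the main obstacle of the proof. For an infinite cardinal $\kappa$ the sum of two cardinals each below $\kappa$ equals their maximum and is therefore again below $\kappa$, which closes the argument and recovers the finite ideal $\kappa=\aleph_{0}$ of the preceding examples. I would flag explicitly that for a finite $\kappa$ the sum $\sharp F_{-}(U)+\sharp F_{-}(W)$ can reach or exceed $\kappa$, so the claim is to be read for $\kappa$ infinite; under that convention both families satisfy all three ideal axioms, completing the proof.
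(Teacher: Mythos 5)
Your proof follows essentially the same route as the paper's: both arguments rest on the monotonicity of $F_{-}$ and the identity $F_{-}(U\cup W)=F_{-}(U)\cup F_{-}(W)$, and both reduce the $Neigh_{\kappa}$ case to the estimate $\sharp F_{-}(U\cup W)\le\sharp F_{-}(U)+\sharp F_{-}(W)$. The one place you go beyond the paper is worth keeping: the paper's proof simply writes $\sharp F_{-}(U)+\sharp F_{-}(W)<\kappa$ with no justification, and as you observe this step genuinely fails for finite $\kappa\ge 2$ (e.g.\ $F=\{\cdotp\}$ and $\kappa=2$ gives the family of singletons and $\emptyset$, which is not union-closed), so your explicit restriction to infinite $\kappa$ is a correct repair of a gap in the published argument rather than an unnecessary caution; the paper only ever invokes the lemma with $\kappa=\aleph_{0}$, so nothing downstream is affected.
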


\begin{proof}
Fix $U,W\in P(V)$. Notice that if $U\subset W$ then $F_{-}(U)\subset F_{-}(W)$. 

If $U\subset W$ and $W\in Neigh_{A}(F,V)$ then $F_{-}(U)\subset F_{-}(W)\subset A$ and so $U\in Neigh_{A}(F,V)$. If $U\in Neigh_{A}(F,V)$ and $W\in Neigh_{A}(F,V)$ i.e. $F_{-}(U)\subset A$ and $F_{-}(W)\subset A$ then $F_{-}(U\cup W)=F_{-}(U)\cup F_{-}(W)\subset A$ i.e. $U\cup W\in Neigh_{A}(F,V)$. 

If $U\subset W$ and $W\in Neigh_{\kappa}(F,V)$ then $\sharp F_{-}(U)\le\sharp F_{-}(W)<\kappa$ and so $U\in Neigh_{\kappa}(F,V)$. If $U\in Neigh_{\kappa}(F,V)$ and $W\in Neigh_{\kappa}(F,V)$ i.e. $\sharp F_{-}(U)<\kappa$ and $\sharp F_{-}(W)<\kappa$ then $\sharp F_{-}(U\cup W)=\sharp(F_{-}(U)\cup F_{-}(W))\le\sharp F_{-}(U)+\sharp F_{-}(W)<\kappa$ i.e. $U\cup W\in Neigh_{\kappa}(F,V)$.

\end{proof}

In the case of undirected multifunction $Isol(F,V)$ is indeed the family of sets that make the multifunction nonstrict.
\begin{lemma}
Let $F\colon V\leadsto V$ be an undirected multifunction and $U\in P(V)$. Then$\colon$
\begin{enumerate}[(i)]
\item$\emptyset\in Isol(F,V)$
\item$U\in Isol(F,V)$ iff $F|_{U}=\text{const}^{\emptyset}$
\item$V\in Isol(F,V)$ iff $F$ is trivial
\item$Isol(F,V)=\{\emptyset\}$ iff $F$ is strict
\end{enumerate}

\end{lemma}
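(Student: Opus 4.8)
The plan is to begin by unfolding the definition $Isol(F,V)=Neigh_{\emptyset}(F,V)=\{U\in P(V)\mid F_{-}(U)\subset\emptyset\}=\{U\in P(V)\mid F_{-}(U)=\emptyset\}$, so that every part reduces to deciding when the complete preimage $F_{-}(U)$ vanishes. With this reformulation, (i) is immediate: by Lemma 2.3(ii) we have $F_{-}(\emptyset)=\emptyset$, hence $\emptyset\in Isol(F,V)$, and no hypothesis on $F$ is needed.

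The heart of the argument is (ii), and this is where undirectedness enters. First I would invoke the identity $F^{-1}_{\cup}=F_{-}$ from Lemma 2.3(iii), which gives $F_{-}(U)=\bigcup_{u\in U}F^{-1}(u)$. Since $F$ is undirected, $F^{-1}=F$, so this collapses to $F_{-}(U)=F_{\cup}(U)=\bigcup_{u\in U}F(u)$. Consequently $U\in Isol(F,V)$ iff $\bigcup_{u\in U}F(u)=\emptyset$ iff $F(u)=\emptyset$ for every $u\in U$, which is exactly $F|_{U}=\text{const}^{\emptyset}$.

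For (iii) I would specialise (ii) to $U=V$: since $F|_{V}=F$, we obtain $V\in Isol(F,V)$ iff $F=\text{const}^{\emptyset}$ iff $F$ is trivial; alternatively this is the instance $Y=V$ of Lemma 2.3(viii). Finally, for (iv), since $\emptyset\in Isol(F,V)$ always holds by (i), the equality $Isol(F,V)=\{\emptyset\}$ is equivalent to the absence of any nonempty isolated set, i.e. $\neg\exists C\in P(V)-\{\emptyset\}\colon C\in Isol(F,V)$. By (ii) this reads $\neg\exists C\in P(V)-\{\emptyset\}\colon F|_{C}=\text{const}^{\emptyset}$, which is precisely the negation of the non-strictness criterion of Lemma 2.3(xii); hence it is equivalent to $F$ being strict.

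The main obstacle, modest but genuine, is part (ii), because the equivalence fails for general (non-undirected) multifunctions: $F_{-}(U)=\emptyset$ says that no vertex points into $U$, whereas $F|_{U}=\text{const}^{\emptyset}$ says that no vertex of $U$ points out. Reconciling these two one-sided conditions is exactly what the symmetry $F=F^{-1}$ supplies, and the cleanest route is the collapse $F_{-}=F^{-1}_{\cup}=F_{\cup}$ noted above rather than a direct element chase. Once (ii) is secured, parts (iii) and (iv) are formal consequences of it together with Lemma 2.3.
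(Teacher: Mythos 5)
Your proposal is correct and follows essentially the same route as the paper: reduce everything to the condition $F_{-}(U)=\emptyset$, use undirectedness only in (ii), and obtain (iv) from (i), (ii) and the non-strictness criterion of Lemma 2.3(xii). The only cosmetic difference is that you establish (ii) via the identity $F_{-}=F^{-1}_{\cup}=F_{\cup}$, whereas the paper runs the equivalent element chase $\forall(v,u)\in V\times U\colon u\notin F(v)=F^{-1}(v)$; both are valid and rest on the same use of $F=F^{-1}$.
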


\begin{proof}
It is easily seen that$\colon$
\begin{enumerate}[(i)]
\item$\emptyset\in Isol(F,V)$ iff $F_{-}(\emptyset)=\emptyset$.
\item$U\in Isol(F,V)$ iff $\emptyset=F_{-}(U)$; or equivalently, $\forall(v,u)\in V\times U\colon u\notin F(v)=F^{-1}(v)$ because $F$ is undirected. We may say equivalently that $\forall u\in U\colon\forall v\in V\colon v\notin F(u)$; or equivalently, $\forall u\in U\colon F(u)=\emptyset$ i.e. $F|_{U}=\text{const}^{\emptyset}$.
\item$V\in Isol(F,V)$ iff $F=F|_{V}=\text{const}^{\emptyset}$ i.e. $F$ is trivial.
\item$F$ is nonstrict iff $\exists U\in P(V)-\{\emptyset\}\colon F|_{U}=\text{const}^{\emptyset}$; or equivalently, $\exists U\in P(V)-\{\emptyset\}\colon U\in Isol(F,V)$ i.e. $Isol(F,V)\neq\{\emptyset\}$.

\end{enumerate}
\end{proof}
We use only strict multifunctions and therefore we will not consider $Isol(F,V)$. We know that there exists the bijective correspondence between ideals and filters\cite{comfort}. This is the function $\uparrow^{d}\colon P(P(V))-\{\emptyset\}\to P(P(V));\Phi\mapsto\Phi^{d}=\{A\in P(V)\mid A^{c}\in\Phi\}$. It is easily seen that $\uparrow^{d}\circ\uparrow^{d}=\text{id}_{P(P(V))}$ and for every $\emptyset\neq\Phi\in P(P(V))$ the set $\Phi$ is ideal iff $\Phi^{d}$ is filter. 
\begin{lemma}
Let $F\colon V\leadsto V$ be a multifunction, $A\in P(V)$ and $\kappa$ be a cardinal. Then$\colon$

\begin{enumerate}[(i)]
\item$Neigh_{A}(F,V)^{d}=Wall_{A^{c}}(F,V)$
\item$Neigh_{\kappa}(F,V)^{d}=Wall_{\kappa}(F,V)$
\item$Isol(F,V)^{d}=Build(F,V)$
\item$Build(F,V)=\{V\}$ iff $F$ is strict
\end{enumerate}

\end{lemma}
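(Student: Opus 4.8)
The plan is to treat all four parts as consequences of the complementation duality $\uparrow^{d}$ together with the De Morgan--type identities for preimages proved in Lemma 2.3, chiefly $F_{-}(B^{c})=F_{+}(B)^{c}$. Throughout I would unwind the defining equality $\Phi^{d}=\{U\in P(V)\mid U^{c}\in\Phi\}$ and substitute $B=U$ (so that $B^{c}=U^{c}$) into the appropriate preimage identity; parts (iii) and (iv) will then fall out as special cases of (i) combined with the involutivity $\uparrow^{d}\circ\uparrow^{d}=\mathrm{id}$.

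For (i) I would compute directly: $U\in Neigh_{A}(F,V)^{d}$ means $U^{c}\in Neigh_{A}(F,V)$, i.e. $F_{-}(U^{c})\subset A$. By Lemma 2.3(xx) we have $F_{-}(U^{c})=F_{+}(U)^{c}$, so the condition becomes $F_{+}(U)^{c}\subset A$, which upon complementing both sides (and reversing the inclusion) is equivalent to $A^{c}\subset F_{+}(U)$, i.e. $U\in Wall_{A^{c}}(F,V)$. Part (ii) is the very same manipulation with cardinalities replacing inclusions: $U\in Neigh_{\kappa}(F,V)^{d}$ iff $\sharp F_{-}(U^{c})<\kappa$, and substituting $F_{-}(U^{c})=F_{+}(U)^{c}$ turns this into $\sharp F_{+}(U)^{c}<\kappa$, which is exactly the defining condition of $Wall_{\kappa}(F,V)$.

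Part (iii) I would obtain as the instance $A=\emptyset$ of (i): since $Isol(F,V)=Neigh_{\emptyset}(F,V)$ and $Wall_{\emptyset^{c}}(F,V)=Wall_{V}(F,V)=Build(F,V)$, applying (i) gives $Isol(F,V)^{d}=Wall_{\emptyset^{c}}(F,V)=Build(F,V)$ at once.

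For (iv) the route is to use (iii) to transport the question back to $Isol$. Applying $\uparrow^{d}$ to both sides of (iii) and invoking $\uparrow^{d}\circ\uparrow^{d}=\mathrm{id}$ gives $Build(F,V)^{d}=Isol(F,V)$; since $\uparrow^{d}$ is a bijection and $\{\emptyset\}^{d}=\{U\mid U^{c}=\emptyset\}=\{V\}$, this yields $Build(F,V)=\{V\}$ iff $Isol(F,V)=\{\emptyset\}$, and the preceding lemma on $Isol(F,V)$ then gives $Isol(F,V)=\{\emptyset\}$ iff $F$ is strict. The step needing care, and the one I expect to be the real obstacle, is precisely this last appeal: that lemma is proved under the hypothesis that $F$ is undirected, whereas here $F$ is nominally an arbitrary multifunction. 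The sharp, hypothesis-free content is that $Build(F,V)=\{U\mid F_{+}(U)=V\}=\{U\mid \reflectbox{D}(F)\subset U\}$ by Lemma 2.3(xiii), whence $Build(F,V)=\{V\}$ iff $\reflectbox{D}(F)=V$; for undirected $F$ one has $D(F)=\reflectbox{D}(F)$, so $\reflectbox{D}(F)=V$ coincides with strictness, which is why the stated equivalence holds in the intended (undirected, strict) setting in which the paper operates.
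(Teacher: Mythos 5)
Your proofs of (i)--(iii) coincide with the paper's: both reduce everything to the identity $F_{-}(U^{c})=F_{+}(U)^{c}$ from Lemma 2.3 and obtain (iii) as the instance $A=\emptyset$ of (i). For (iv) you also follow the paper's route --- transport $Build(F,V)=\{V\}$ through the involution $\uparrow^{d}$ to $Isol(F,V)=\{\emptyset\}$ and then invoke the preceding lemma --- and the reservation you raise about that last step is justified: the paper's proof makes exactly this appeal, but the lemma asserting $Isol(F,V)=\{\emptyset\}$ iff $F$ is strict is stated and proved only for \emph{undirected} $F$, whereas the present lemma imposes no such hypothesis. Your hypothesis-free reformulation is the correct diagnosis: by Lemma 2.3(xiii) one has $Build(F,V)=\{U\in P(V)\mid\reflectbox{D}(F)\subset U\}$, so $Build(F,V)=\{V\}$ iff $\reflectbox{D}(F)=V$, and for a general multifunction this condition neither implies nor is implied by strictness (on $V=\{1,2\}$ take $F(1)=\{1,2\}$, $F(2)=\emptyset$ to see that $Build(F,V)=\{V\}$ can hold without strictness, and $F(1)=F(2)=\{1\}$ to see that strictness can hold with $Build(F,V)\neq\{V\}$). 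So part (iv) as stated requires the undirectedness hypothesis, under which $\reflectbox{D}(F)=D(F)$ and the equivalence with strictness is restored; your write-up supplies exactly the justification that the paper's proof elides, and correctly isolates the missing assumption.
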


\begin{proof}
Fix $U\in P(V)$.
\begin{enumerate}[(i)]
\item$U\in Neigh_{A}(F,V)^{d}$ iff $U^{c}\in Neigh_{A}(F,V)$ i.e. $A\supset F_{-}(U^{c})=(\uparrow^{c}\circ F_{+}\circ\uparrow^{c})(\uparrow^{c}(U))=(\uparrow^{c}\circ F_{+}\circ\uparrow^{c}\circ\uparrow^{c})(U)=(\uparrow^{c}\circ F_{+})(U)=F_{+}(U)^{c}$. We may say equivalently that $F_{+}(U)=F_{+}(U)^{cc}\supset A^{c}$ i.e. $U\in Wall_{A^{c}}(F,V)$.
\item$U\in Neigh_{\kappa}(F,V)^{d}$ iff $U^{c}\in Neigh_{\kappa}(F,V)$ i.e. $\kappa>\sharp F_{-}(U^{c})=\sharp F_{-}(V-U)=\sharp(V-F_{+}(U))=\sharp F_{+}(U)^{c}$; or equivalently,\newline$U\in Wall_{\kappa}(F,V)$.
\item$Isol(F,V)^{d}=Neigh_{\emptyset}(F,V)^{d}=Wall_{\emptyset^{c}}(F,V)=Wall_{V}(F,V)=$\newline$=Build(F,V)$.
\item$F$ is strict iff $Isol(F,V)=\{\emptyset\}$; or equivalently, $Build(F,V)=$\newline$=Isol(F,V)^{d}=\{\emptyset\}^{d}=\{V\}$.
\end{enumerate}
\end{proof}

In this way we got the ideals and filters on the set of vertices that seem to be like set theoretic ideals and filters. So the filter $Wall_{\kappa}(F,V)$ behaves like $\kappa-$finite filter. 
\begin{lemma}
Let $F\colon V\leadsto V$ be a strict multifunction such that $\sharp V\ge\kappa\ge\aleph_{0}$. Then $Wall_{\kappa}(F,V)$ is the proper filter.
\end{lemma}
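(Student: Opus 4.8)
The plan is to obtain the filter structure essentially for free from the duality already set up, and then to isolate properness as the only genuine computation. Recall that the map $\uparrow^{d}$ is a bijective correspondence carrying ideals to filters, and that the previous lemma records $Neigh_{\kappa}(F,V)^{d}=Wall_{\kappa}(F,V)$. Since $Neigh_{\kappa}(F,V)$ has already been shown to be an ideal --- where the hypothesis $\kappa\ge\aleph_{0}$ enters precisely through the cardinal estimate $\sharp F_{-}(U\cup W)\le\sharp F_{-}(U)+\sharp F_{-}(W)<\kappa$, valid for infinite $\kappa$ --- its dual $Wall_{\kappa}(F,V)$ is automatically a filter (and nonempty, since $V\in Wall_{\kappa}(F,V)$ because $F_{+}(V)^{c}=\emptyset$). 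So the whole content of the statement reduces to showing that this filter is \emph{proper}.

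To prove properness I would show that $\emptyset\notin Wall_{\kappa}(F,V)$, since a filter is proper exactly when it omits the empty set --- otherwise upward closure forces it to swallow all of $P(V)$. Straight from the definition, $\emptyset\in Wall_{\kappa}(F,V)$ iff $\sharp F_{+}(\emptyset)^{c}<\kappa$. This is where strictness of $F$ does its work: by Lemma 2.3(ix) strictness is equivalent to $F_{+}(\emptyset)=\emptyset$, so $F_{+}(\emptyset)^{c}=V$ and $\sharp F_{+}(\emptyset)^{c}=\sharp V$. The remaining hypothesis $\sharp V\ge\kappa$ then gives $\sharp F_{+}(\emptyset)^{c}=\sharp V\ge\kappa$, so the defining inequality $\sharp F_{+}(\emptyset)^{c}<\kappa$ fails and hence $\emptyset\notin Wall_{\kappa}(F,V)$. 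Equivalently, one may run the same check through the dual description: $\emptyset\in Wall_{\kappa}(F,V)$ iff $V=\emptyset^{c}\in Neigh_{\kappa}(F,V)$ iff $\sharp F_{-}(V)<\kappa$, and strictness gives $F_{-}(V)=V$ by Lemma 2.3(xi), so again $\sharp F_{-}(V)=\sharp V\ge\kappa$.

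I do not expect a real obstacle here: once the ideal lemma and the duality are in hand, the argument is a two-line verification. The only points demanding care are bookkeeping ones --- making sure each hypothesis is used for its correct purpose, namely $\kappa\ge\aleph_{0}$ for the filter/ideal structure (through infinite cardinal arithmetic) and the pairing of $\sharp V\ge\kappa$ with strictness for properness. It is worth remarking that without strictness the set $F_{+}(\emptyset)$ could be large, $\emptyset$ might then lie in $Wall_{\kappa}(F,V)$, and the filter would collapse to the improper one; this is exactly why the strictness assumption cannot be dropped.
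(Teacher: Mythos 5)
Your proof is correct and follows essentially the same route as the paper: the paper's own proof is exactly the properness check $\emptyset\in Wall_{\kappa}(F,V)\Leftrightarrow\sharp F_{+}(\emptyset)^{c}=\sharp V<\kappa$, contradicted by $\sharp V\ge\kappa$, with the filter structure taken for granted from the duality $Neigh_{\kappa}(F,V)^{d}=Wall_{\kappa}(F,V)$. Your write-up merely makes that implicit duality step explicit (and correctly observes that $\kappa\ge\aleph_{0}$ is what justifies the cardinal inequality in the ideal lemma), which is a welcome clarification rather than a divergence.
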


\begin{proof}
Assume that $F$ is strict. Then $\emptyset\in Wall_{\kappa}(F,V)$ iff $\sharp F_{+}(\emptyset)^{c}=\sharp\emptyset^{c}=\sharp V<\kappa$ but $\kappa\le\sharp V$ and contradiction.
\end{proof}

Consider the $Wall_{A}(F,V)$ as a principal filter in sense of multifunctions. We define the filter generated by a family of sets. Fix $\Gamma\in P(P(V))$ and $F\colon V\leadsto V$. Let $F_{+}^{-1}(\Gamma)=\{U\in P(V)\mid F_{+}(U)\in\Gamma\}$ be the preimage of small preimage $F_{+}\colon P(V)\to P(V);U\mapsto F_{+}(U)=\{v\in V\mid F(v)\subset U\}$.  

\begin{definition}
Let $F\colon V\leadsto V$ be a multifunction and $\Gamma\in P(P(V))$. Then$\colon$

\begin{enumerate}[(i)]
\item$\alpha_{\Gamma}(F,V)=\{\Phi\in\alpha(V)\mid F_{+}^{-1}(\Gamma)\subset\Phi\}\subset\alpha(V)$ is set of filters containing the family $F_{+}^{-1}(\Gamma)$
\item$\delta^{+}(F,V)=\{\Phi\in P(P(V))-\{\emptyset\}\mid\forall U,W\in P(V)\colon(U\in\Phi\land W\in\Phi\Rightarrow U\cap W\in\Phi)\land(U\in\Phi\land U\subset F_{+}(W)\Rightarrow W\in\Phi)\}\subset P(P(V))$ is set of $F_{+}-$filters 
\item$\delta_{\Gamma}^{+}(F,V)=\{\Phi\in\delta^{+}(F,V)\mid\Gamma\subset\Phi\}\subset\delta^{+}(F,V)$ is set of $F_{+}-$filters containing the family $\Gamma$

\end{enumerate}
\end{definition}

\begin{lemma}
Let $F\colon V\leadsto V$ be a multifunction, $U,W\in P(V)$, $\Gamma,\Gamma_{1},\Gamma_{2}\in P(P(V))$ and $\Psi\in\delta^{+}(F,V),\rho\subset\delta^{+}(F,V)$. Then$\colon$

\begin{enumerate}[(i)]
\item$P(V)\in\alpha_{\Gamma}(F,V)$ and $P(V)\in\delta_{\Gamma}^{+}(F,V)$
\item if $\Gamma_{1}\subset\Gamma_{2}$ then $\alpha_{\Gamma_{2}}(F,V)\subset\alpha_{\Gamma_{1}}(F,V)$ and $\delta^{+}_{\Gamma_{2}}(F,V)\subset\delta^{+}_{\Gamma_{1}}(F,V)$
\item$\alpha_{P(V)}(F,V)=\{P(V)\}=\delta^{+}_{P(V)}(F,V)$
\item$\alpha_{\emptyset}(F,V)=\alpha(V)$ and $\delta^{+}_{\emptyset}(F,V)=\delta^{+}(F,V)$
\item if $F$ is strict then $\alpha_{\{\emptyset\}}(F,V)=\{P(V)\}$ 
\item$\emptyset\in\Psi\Leftrightarrow\Psi=P(V)$ 
\item$\delta^{+}_{\{\emptyset\}}(F,V)=\{P(V)\}$
\item if $F$ is trivial then $\alpha_{\Gamma}(F,V)=\begin{cases}\{P(V)\}$ iff $V\in\Gamma\\\alpha(V)$ iff $V\notin\Gamma\end{cases}$
\item if $F$ is trivial then $\delta^{+}(F,V)=\{P(V)\}=\delta^{+}_{\Gamma}(F,V)$  
\item$\bigcap\alpha_{\Gamma}(F,V)\in\alpha_{\Gamma}(F,V)$ 
\item$\bigcap\rho\in\delta^{+}(F,V)$
\item$\bigcap\delta^{+}_{\Gamma}(F,V)\in\delta^{+}_{\Gamma}(F,V)$
\end{enumerate}

\end{lemma}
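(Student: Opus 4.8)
The plan is to treat each of the twelve items as a direct unravelling of the definitions of $\alpha(V)$, $\delta^{+}(F,V)$ and the operator $F_{+}^{-1}$, after first isolating four small facts that recur throughout. The facts I would record at the outset are: the map $\Gamma\mapsto F_{+}^{-1}(\Gamma)$ is monotone; for the distinguished families one computes $F_{+}^{-1}(P(V))=P(V)$ (since $F_{+}(U)\subset V$ always), $F_{+}^{-1}(\emptyset)=\emptyset$, and $F_{+}^{-1}(\{\emptyset\})=\{U\mid F_{+}(U)=\emptyset\}$; using $F_{+}(V)=V$ from Lemma 2.3(i), every nonempty $\Phi\in\alpha(V)$ and every $\Phi\in\delta^{+}(F,V)$ contains $V$; and any such $\Phi$ containing $\emptyset$ collapses to $P(V)$. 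With these in hand, (i) is the remark that $P(V)$ satisfies the defining biconditional of $\alpha(V)$ and both implications of $\delta^{+}$ vacuously (every conclusion lands in $P(V)$) and contains $F_{+}^{-1}(\Gamma)$ and $\Gamma$; (ii) follows from monotonicity, as a filter above the larger family is above the smaller, and $\Gamma_{1}\subset\Gamma_{2}\subset\Phi$ gives the $\delta^{+}$ case; and (iii),(iv) are read off from $F_{+}^{-1}(P(V))=P(V)$ and $F_{+}^{-1}(\emptyset)=\emptyset$.

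For the collapse items I would use the second $\delta^{+}$-clause with $U=\emptyset$: since $\emptyset\subset F_{+}(W)$ always, once $\emptyset\in\Psi$ we get $W\in\Psi$ for every $W$, so $\Psi=P(V)$, which is (vi); then (vii) is the case $\Gamma=\{\emptyset\}$ together with (vi). For (v) I would prove the analogous collapse for filters, namely that if $\emptyset\in\Phi\in\alpha(V)$ then the biconditional applied to $A\cap\emptyset=\emptyset\in\Phi$ forces $A\in\Phi$, so $\Phi=P(V)$; strictness, via Lemma 2.3(ix) ($F$ strict iff $F_{+}(\emptyset)=\emptyset$), puts $\emptyset$ into $F_{+}^{-1}(\{\emptyset\})$, forcing every member of $\alpha_{\{\emptyset\}}(F,V)$ to contain $\emptyset$ and hence to equal $P(V)$.

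When $F$ is trivial, $F(v)=\emptyset$ for all $v$, so $F_{+}(W)=V$ for every $W$. Hence $F_{+}^{-1}(\Gamma)=P(V)$ when $V\in\Gamma$ and $F_{+}^{-1}(\Gamma)=\emptyset$ when $V\notin\Gamma$; substituting into $\alpha_{\Gamma}(F,V)$ and invoking (iii),(iv) gives the two cases of (viii). For (ix), since $F_{+}(W)=V\supset U$ for any $U$, applying the second $\delta^{+}$-clause to an arbitrary member $U$ of a nonempty $\Phi$ yields $W\in\Phi$ for all $W$, so $\delta^{+}(F,V)=\{P(V)\}$, and $\delta^{+}_{\Gamma}(F,V)=\{P(V)\}$ because $P(V)$ contains every $\Gamma$.

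Finally, for (x) I would verify that $\bigcap\alpha_{\Gamma}(F,V)$ is again a filter: it is nonempty because $V$ lies in every filter; both directions of the $\alpha(V)$-biconditional pass to the intersection, since $A,B\in\bigcap$ holds iff $A,B$ lie in each $\Phi$ and each $\Phi$ is a filter; and it contains $F_{+}^{-1}(\Gamma)$ because every member does. The same template verifies the two $\delta^{+}$-clauses for $\bigcap\rho$ in (xi), with nonemptiness again from $V\in\Phi$ (here using $F_{+}(V)=V$ and the second clause), and with the degenerate case $\rho=\emptyset$ handled by the convention $\bigcap\emptyset=P(V)\in\delta^{+}(F,V)$. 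Item (xii) is then (xi) applied to $\rho=\delta^{+}_{\Gamma}(F,V)$, together with $\Gamma\subset\bigcap\delta^{+}_{\Gamma}(F,V)$ since $\Gamma\subset\Phi$ for each $\Phi$ in the family. The main obstacle is the bookkeeping in these intersection arguments, specifically confirming that the \emph{reverse} implication of the $\alpha(V)$-biconditional survives the intersection and securing nonemptiness from the fact that $V$ belongs to every nonempty filter and every $F_{+}$-filter; once those two points are pinned down, the rest is routine.
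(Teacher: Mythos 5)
Your proposal is correct and follows essentially the same route as the paper: compute $F_{+}^{-1}$ on the distinguished families $P(V)$, $\emptyset$, $\{\emptyset\}$, use the collapse of any filter or $F_{+}$-filter containing $\emptyset$ to $P(V)$, and verify the closure clauses directly for the intersections. The only cosmetic differences are that the paper cites a reference for ``an intersection of filters is a filter'' in (x) where you verify it by hand, and that you additionally flag the degenerate case $\rho=\emptyset$ in (xi), which the paper leaves implicit.
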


\begin{proof}
Fix $\Phi\in\alpha(V),\Psi\in\delta^{+}(F,V)$.
\begin{enumerate}[(i)]
\item$P(V)\in\alpha_{\Gamma}(F,V)$ because $P(V)\in\alpha(V)$ and $F_{+}^{-1}(\Gamma)\subset P(V)$ and $P(V)\in\delta^{+}_{\Gamma}(F,V)$ because $P(V)\in\delta^{+}(F,V)$ and $\Gamma\subset P(V)$
\item If $\Gamma_{1}\subset\Gamma_{2}$ then $F_{+}^{-1}(\Gamma_{1})\subset F_{+}^{-1}(\Gamma_{2})$. Assume that $\Phi\in\alpha_{\Gamma_{2}}(F,V)$. Then $F_{+}^{-1}(\Gamma_{2})\subset\Phi$ and so $F_{+}^{-1}(\Gamma_{1})\subset\Phi$ i.e. $\Phi\in\alpha_{\Gamma_{1}}(F,V)$. Assume that $\Psi\in\delta^{+}_{\Gamma_{2}}(F,V)$. Then $\Gamma_{2}\subset\Psi$ and so $\Gamma_{1}\subset\Psi$ i.e. $\Psi\in\delta^{+}_{\Gamma_{1}}(F,V)$.
\item Notice that $F_{+}^{-1}(P(V))=P(V)$ and so $\alpha_{P(V)}(F,V)=\{\Phi\in\alpha(V)\mid F_{+}^{-1}(P(V))\subset\Phi\}=\{\Phi\in\alpha(V)\mid P(V)\subset\Phi\}=\{P(V)\}$ and\newline$\delta^{+}_{P(V)}(F,V)=\{\Psi\in\delta^{+}(F,V)\mid P(V)\subset\Psi\}=\{P(V)\}$
\item$\alpha_{\emptyset}(F,V)=\{\Phi\in\alpha(V)\mid F_{+}^{-1}(\emptyset)\subset\Phi\}=\{\Phi\in\alpha(V)\mid\emptyset\subset\Phi\}=\alpha(V)$ and $\delta^{+}_{\emptyset}(F,V)=\{\Psi\in\delta^{+}(F,V)\mid\emptyset\subset\Psi\}=\delta^{+}(F,V)$
\item Assume that $F$ is strict i.e. $\emptyset\in F_{+}^{-1}(\{\emptyset\})$. Then $\alpha_{\{\emptyset\}}(F,V)=\{\Phi\in\alpha(V)\mid F_{+}^{-1}(\{\emptyset\})\subset\Phi\}\subset\{\Phi\in\alpha(V)\mid\emptyset\in\Phi\}=\{P(V)\}$ and so $\alpha_{\{\emptyset\}}(F,V)=\{P(V)\}$
\item If $\emptyset\in\Psi$ then for every $W\in P(V)$ such that $\emptyset\subset F_{+}(W)$ but this is true $W\in\Psi$ i.e. $\Psi=P(V)$. If $\Psi=P(V)$ then $\emptyset\in\Psi$.
\item$\delta^{+}_{\{\emptyset\}}(F,V)=\{\Psi\in\delta^{+}(F,V)\mid\{\emptyset\}\subset\Psi\}=\{P(V)\}$ because $\emptyset\in\Psi$ iff $\Psi=P(V)$
\item Notice that $(\text{const}^{\emptyset}_{+})^{-1}(\Gamma)=\begin{cases}P(V)$ iff $V\in\Gamma\\\emptyset$ iff $V\notin\Gamma\end{cases}$ and so $\alpha_{\Gamma}(\text{const}^{\emptyset},V)=\{\Phi\in\alpha(V)\mid(\text{const}^{\emptyset}_{+})^{-1}(\Gamma)\subset\Phi\}=\begin{cases}\{P(V)\}$ iff $V\in\Gamma\\\alpha(V)$ iff $V\notin\Gamma\end{cases}$
\item$\delta^{+}(\text{const}^{\emptyset},V)=\{P(V)\}$ because $\text{const}^{\emptyset}_{+}(W)=V$ and $\delta^{+}_{\Gamma}(\text{const}^{\emptyset},V)=\{\Psi\in\{P(V)\}\mid\Gamma\subset\Psi\}=\{P(V)\}$
\item$\alpha_{\Gamma}(F,V)\subset\alpha(V)$ and so $\bigcap\alpha_{\Gamma}(F,V)\in\alpha(V)$\cite{comfort}. If $Z\in F_{+}^{-1}(\Gamma)$ then for every $\Phi\in\alpha(V)$ such that $F_{+}^{-1}(\Gamma)\subset\Phi$ clearly $Z\in\Phi$ i.e. $F_{+}^{-1}(\Gamma)\subset\bigcap\alpha_{\Gamma}(F,V)$ and so $\bigcap\alpha_{\Gamma}(F,V)\in\alpha_{\Gamma}(F,V)$ is the smallest filter from $\alpha_{\Gamma}(F,V)$
\item If $U,W\in\bigcap\rho$ then $U\in\Psi$ and $W\in\Psi$ for each $\Psi\in\rho\subset\delta^{+}(F,V)$. Thus $U\cap W\in\Psi$ for each $\Psi\in\rho$ i.e. $U\cap W\in\bigcap\rho$. If $U\in\bigcap\rho$ and $U\subset F_{+}(W)$. Then $U\in\Psi$ and $U\subset F_{+}(W)$ for each $\Psi\in\rho\subset\delta^{+}(F,V)$. Thus $W\in\Psi$ for each $\Psi\in\rho$ i.e. $W\in\bigcap\rho$. Therefore $\bigcap\rho\in\delta^{+}(F,V)$.
\item$\delta^{+}_{\Gamma}(F,V)\subset\delta^{+}(F,V)$ and so $\bigcap\delta^{+}_{\Gamma}(F,V)\in\delta^{+}(F,V)$. If $Z\in\Gamma$ then for every $\Psi\in\delta^{+}(F,V)$ such that $\Gamma\subset\Psi$ clearly $Z\in\Psi$ i.e. $\Gamma\subset\bigcap\delta^{+}_{\Gamma}(F,V)$ and so $\bigcap\delta^{+}_{\Gamma}(F,V)\in\delta^{+}_{\Gamma}(F,V)$ is the smallest $F_{+}-$filter from $\delta^{+}_{\Gamma}(F,V)$. 
\end{enumerate}
\end{proof}

\begin{lemma}
Let $F\colon V\leadsto V$ be a multifunction, $A\in P(V)$ and $\Gamma\in P(P(V))$. Then$\colon$
\begin{enumerate}[(i)]
\item$Filtr_{\Gamma}(F,V)=\{U\in P(V)\mid\exists n\in N\colon\exists A_{1},\ldots,A_{n}\in\Gamma\colon\bigcap_{i=1}^{n}A_{i}\subset F_{+}(U)\}\in\alpha_{\Gamma}(F,V)$
\item$Filtr_{\{A\}}(F,V)=Wall_{A}(F,V)$
\item$Filtr_{\Gamma}(F,V)\subset\bigcap\delta^{+}_{\Gamma}(F,V)$

\end{enumerate}
\end{lemma}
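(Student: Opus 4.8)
The plan is to read off $Filtr_{\Gamma}(F,V)$ as the family of those $U$ for which $F_{+}(U)$ lies in the ordinary set-theoretic filter on $V$ generated by $\Gamma$, and to drive the whole argument by two elementary properties of the small preimage: it is monotone, so $U\subset W\Rightarrow F_{+}(U)\subset F_{+}(W)$, and it carries intersections to intersections, so $F_{+}(U\cap W)=F_{+}(U)\cap F_{+}(W)$. Both follow immediately from $F_{+}(B)=\{v\in V\mid F(v)\subset B\}$, since $F(v)\subset U\cap W$ is equivalent to $F(v)\subset U$ and $F(v)\subset W$. These two facts are the engine for all three parts.

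For part (i) I would first check that $\Phi:=Filtr_{\Gamma}(F,V)$ is a filter, that is $\Phi\in\alpha(V)$. It is nonempty because $V\in\Phi$: indeed $F_{+}(V)=V$ by Lemma 2.3(i), so any finite intersection of members of $\Gamma$ is contained in $F_{+}(V)$ (the empty intersection being $V$ itself). For the forward filter condition, suppose $U,W\in\Phi$, witnessed by $A_{1},\dots,A_{n}\in\Gamma$ with $\bigcap_{i}A_{i}\subset F_{+}(U)$ and by $B_{1},\dots,B_{m}\in\Gamma$ with $\bigcap_{j}B_{j}\subset F_{+}(W)$. Concatenating the two lists and intersecting the two inclusions gives $\bigcap_{i}A_{i}\cap\bigcap_{j}B_{j}\subset F_{+}(U)\cap F_{+}(W)=F_{+}(U\cap W)$, so $U\cap W\in\Phi$. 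For the converse condition, if $U\cap W\in\Phi$ then monotonicity gives $F_{+}(U\cap W)\subset F_{+}(U)$ and $F_{+}(U\cap W)\subset F_{+}(W)$, so the same witnessing family shows $U\in\Phi$ and $W\in\Phi$; together these are exactly the biconditional required by the definition of $\alpha(V)$. Finally $F_{+}^{-1}(\Gamma)\subset\Phi$: if $F_{+}(U)\in\Gamma$, take $n=1$ and $A_{1}=F_{+}(U)$, so that $A_{1}\subset F_{+}(U)$ holds trivially. Hence $\Phi\in\alpha_{\Gamma}(F,V)$.

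Part (ii) is immediate once (i) is set up: for $\Gamma=\{A\}$ every nonempty finite intersection of members of $\Gamma$ equals $A$, so the defining condition of $Filtr_{\{A\}}(F,V)$ collapses to $A\subset F_{+}(U)$, which is precisely membership in $Wall_{A}(F,V)$. For part (iii) I would fix $\Psi\in\delta^{+}_{\Gamma}(F,V)$ and $U\in Filtr_{\Gamma}(F,V)$ with witnesses $A_{1},\dots,A_{n}\in\Gamma$ such that $\bigcap_{i}A_{i}\subset F_{+}(U)$. Since $\Gamma\subset\Psi$, each $A_{i}\in\Psi$, and the first defining clause of $\delta^{+}(F,V)$ (closure under intersection) gives $\bigcap_{i}A_{i}\in\Psi$. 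Applying the second defining clause of $\delta^{+}(F,V)$ to the member $\bigcap_{i}A_{i}\in\Psi$ and the inclusion $\bigcap_{i}A_{i}\subset F_{+}(U)$ yields $U\in\Psi$. As $\Psi$ was an arbitrary element of $\delta^{+}_{\Gamma}(F,V)$, we conclude $U\in\bigcap\delta^{+}_{\Gamma}(F,V)$, which is the desired inclusion.

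The content here is light; the only step requiring genuine care is verifying that $Filtr_{\Gamma}(F,V)$ satisfies the filter axioms of $\alpha(V)$, and the crux of that is the identity $F_{+}(U\cap W)=F_{+}(U)\cap F_{+}(W)$ together with monotonicity of $F_{+}$. A secondary point to pin down is the convention for the index $n$, namely whether the empty intersection ($n=0$, equal to $V$) is permitted; settling it merely guarantees nonemptiness of $Filtr_{\Gamma}(F,V)$ and is anyway subsumed by the $n=1$ argument as soon as $\Gamma\neq\emptyset$. The remaining two parts are a pure unwinding of the definitions of $Wall_{A}$ and $\delta^{+}$.
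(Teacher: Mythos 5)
Your proof is correct and follows essentially the same route as the paper's: monotonicity of $F_{+}$ together with $F_{+}(U\cap W)=F_{+}(U)\cap F_{+}(W)$ drive part (i), the witness $n=1$, $A_{1}=F_{+}(U)$ gives $F_{+}^{-1}(\Gamma)\subset Filtr_{\Gamma}(F,V)$, and parts (ii) and (iii) are the same unwinding of the definitions of $Wall_{A}$ and $\delta^{+}$. If anything you are slightly more careful than the paper, which verifies upward closure and closure under intersection but never explicitly checks that $Filtr_{\Gamma}(F,V)\neq\emptyset$ (your observation that $V\in Filtr_{\Gamma}(F,V)$, modulo the $\Gamma=\emptyset$ convention, fills that in).
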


\begin{proof}
Fix $A,U,W\in P(V),\Psi\in\delta^{+}(F,V)$. Notice that if $U\subset W$ then $F_{+}(U)\subset F_{+}(W)$.
\begin{enumerate}[(i)]
\item If $U\subset W$ and $U\in Filtr_{\Gamma}(F,V)$ then there exist $n\in N$ and\newline$A_{1},\ldots,A_{n}\in\Gamma$ such that $\bigcap_{i=1}^{n}A_{i}\subset F_{+}(U)\subset F_{+}(W)$ and so $W\in Filtr_{\Gamma}(F,V)$. Assume that $U\in Filtr_{\Gamma}(F,V)$ and $W\in Filtr_{\Gamma}(F,V)$ i.e. there exist $n,m\in N$ and $A_{1},\ldots,A_{n},B_{1},\ldots,B_{m}\in\Gamma$ such that $\bigcap_{i=1}^{n}A_{i}\subset F_{+}(U)$ and $\bigcap_{i=1}^{m}B_{i}\subset F_{+}(W)$. Then there exist $k\in N,k=m+n$ and $C_{1},\ldots,C_{k}\in\Gamma,C_{i}=\begin{cases}A_{i}$ iff $i\in\{1,\ldots,n\}\\B_{i-n}$ iff $i\in\{n+1,\ldots,n+m\}\end{cases}$ such that $\bigcap_{i=1}^{k}C_{i}=\bigcap_{i=1}^{n}A_{i}\cap\bigcap_{j=1}^{m}B_{j}\subset F_{+}(U)\cap F_{+}(W)=F_{+}(U\cap W)$ i.e. $U\cap W\in Filtr_{\Gamma}(F,V)$. Assume that $U\in F_{+}^{-1}(\Gamma)$. Then $F_{+}(U)\in\Gamma$ and so there exist $n\in N,n=1$ and $A\in\Gamma,A=F_{+}(U)$ such that $A\subset F_{+}(U)$ i.e. $U\in Filtr_{\Gamma}(F,V)$. Therefore $F_{+}^{-1}(\Gamma)\subset Filtr_{\Gamma}(F,V)$.
\item Thus $U\in Filtr_{\{A\}}(F,V)$ iff there exists $B\in\{A\}$ such that $B\subset F_{+}(U)$ i.e. $U\in Wall_{A}(F,V)$.   
\item If $U\in Filtr_{\Gamma}(F,V)$ then there exist $n\in N$ and $A_{1},\ldots,A_{n}\in\Gamma$ such that $\bigcap_{i=1}^{n}A_{i}\subset F_{+}(U)$. Fix $\Psi\in\delta^{+}_{\Gamma}(F,V)$. Then $A_{1},\ldots,A_{n}\in\Psi$ and $\bigcap_{i=1}^{n}A_{i}\in\Psi$ and so $U\in\Psi$ i.e. $U\in\bigcap\delta^{+}_{\Gamma}(F,V)$.
\end{enumerate}
\end{proof}

This is the reason we call $Wall_{A}(F,V)$ a $(F,V)-$principal filter.

We find out how to apply these filters to metric spaces\cite{bourbakitop}. Having multifunction we can build the metric space.

\begin{lemma}
Let $F\colon V\leadsto V$ be a simple graph multifunction. Then $d_{F}\colon V\times V\to R\cup\{\infty\}$ is metric on $V$ where $(u,w)\mapsto d_{F}(u,w)=$\newline$=\begin{cases}\min\{n\in N\mid u\in F^{n\cup}(w)\}$ iff $u\in F^{\infty\cup}_{\mid 1}(w)\\\infty$ iff $u\notin F^{\infty\cup}_{\mid 1}(w)\end{cases}$. If $F$ is connected then $\forall u,w\in V\colon d_{F}(u,w)\in N$.
\end{lemma}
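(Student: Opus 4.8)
The plan is to verify the three metric axioms (identity of indiscernibles, symmetry, and the triangle inequality) together with well-definedness of the minimum, treating the value $\infty$ as an absorbing element throughout; the finiteness claim under connectedness then falls out immediately. Since $F$ is a simple graph multifunction it is in particular undirected, so $F=F^{-1}$, and throughout this proof $N$ is read as containing $0$ (this is forced, since $d_{F}(u,u)=0$ can only hold if $0$ is an admissible index). First I would check that the minimum defining $d_{F}$ exists whenever $u\in F^{\infty\cup}_{\mid 1}(w)$. By Lemma 4.10(v) undirectedness gives $F^{\infty\cup}_{\mid 1}=F^{+\infty\cup}_{\mid 1}=\bigcup_{n\in N}F^{n\cup}$, so $u\in F^{\infty\cup}_{\mid 1}(w)$ is exactly the statement that the set $S_{u,w}=\{n\in N\mid u\in F^{n\cup}(w)\}$ is a nonempty subset of $N$; by well-ordering its minimum exists, and non-negativity of $d_{F}$ is then immediate.

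For identity of indiscernibles I would use the base case of the iteration from Definition 4.1, namely $F^{0\cup}(w)=\{w\}$. Thus $0\in S_{u,w}$ iff $u\in\{w\}$ iff $u=w$, so $d_{F}(u,w)=0$ iff $u=w$ (when $u\neq w$ the minimum, if it exists at all, is at least $1$, and otherwise $d_{F}(u,w)=\infty$). Symmetry I would obtain from Lemma 4.10(i): undirectedness of $F$ propagates to every iterate, so $F^{n\cup}$ is undirected and hence $u\in F^{n\cup}(w)\Leftrightarrow w\in F^{n\cup}(u)$. This gives $S_{u,w}=S_{w,u}$, so the two minima coincide; for the infinite branch, $F^{\infty\cup}_{\mid 1}$ is itself undirected (Lemma 4.11(i)), so $u\in F^{\infty\cup}_{\mid 1}(w)\Leftrightarrow w\in F^{\infty\cup}_{\mid 1}(u)$, making the $\infty$ case symmetric as well. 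Equivalently one may invoke the reversal identity $Walk_{u\to w}(F,V,n)=\text{reverse}(Walk_{w\to u}(F,V,n))$ from Lemma 3.17(vi) together with Lemma 4.7.

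The main step is the triangle inequality $d_{F}(u,w)\le d_{F}(u,v)+d_{F}(v,w)$. If either summand on the right is $\infty$ the inequality is vacuous, so I would assume both are finite, say $p=d_{F}(u,v)$ and $q=d_{F}(v,w)$, giving $u\in F^{p\cup}(v)$ and $v\in F^{q\cup}(w)$. Translating through Lemma 4.7, these say $Walk_{u\to v}(F,V,p)\neq\emptyset$ and $Walk_{v\to w}(F,V,q)\neq\emptyset$; the concatenation property Lemma 3.17(vii) then yields $Walk_{u\to w}(F,V,p+q)\neq\emptyset$, i.e. $u\in F^{(p+q)\cup}(w)$ again by Lemma 4.7. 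Hence $p+q\in S_{u,w}$ (in particular $u\in F^{\infty\cup}_{\mid 1}(w)$, so $d_{F}(u,w)$ really is computed by the minimum branch), and therefore $d_{F}(u,w)\le p+q$. Alternatively the same conclusion follows purely at the level of iterates from $F^{(p+q)\cup}=F^{p}_{\cup}\circ F^{q\cup}$ (Lemma 4.8(i)) and monotonicity of $F^{p}_{\cup}$ applied to $\{v\}\subset F^{q\cup}(w)$.

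Finally, if $F$ is connected then for all $u,w\in V$ the pair $(u,w)$ is connected, which by Lemma 5.4(i) means $u\in F^{\infty\cup}_{\mid 1}(w)$; thus $d_{F}(u,w)$ is given by the minimum branch and lies in $N$. The only genuinely delicate point, rather than the triangle inequality itself (which reduces cleanly to walk concatenation), is the uniform bookkeeping of the value $\infty$ across all three axioms, ensuring in each case that the finite and infinite branches of the definition match up; this is why I would fix the notation $S_{u,w}$ and the identification $u\in F^{\infty\cup}_{\mid 1}(w)\Leftrightarrow S_{u,w}\neq\emptyset$ once at the start and reuse it.
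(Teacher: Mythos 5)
Your proposal is correct and follows essentially the same route as the paper's proof: identity of indiscernibles via $F^{0\cup}(w)=\{w\}$, symmetry via undirectedness of the iterates $F^{n\cup}$ and of $F^{\infty\cup}_{\mid 1}$, and the triangle inequality via $u\in F^{p\cup}(v)$, $v\in F^{q\cup}(w)$ implying $u\in F^{(p+q)\cup}(w)$. The only difference is cosmetic: the paper enumerates five connectivity cases for the triangle inequality, whereas you dispose of the infinite cases uniformly by treating $\infty$ as absorbing, which is a slight streamlining of the same argument.
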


\begin{proof}
Fix $u,v,w\in V$. If $d_{F}(u,w)=0$ then $u\in F^{0\cup}(w)=\{w\}$ i.e. $u=w$. But $d_{F}(v,v)\neq\infty$ because $F^{\infty\cup}_{\mid 1}$ is everywhereloop and $d_{F}(v,v)=\min\{n\in N\mid v\in F^{n\cup}(v)\}=0$ because $v\in\{v\}=F^{0\cup}(v)$. From the undirectedness of $F$ we conclude that $d_{F}(u,w)=\infty$ iff $u\notin F^{\infty\cup}_{\mid 1}(w)$; or equivalently, $w\notin F^{\infty\cup}_{\mid 1}(u)$ i.e. $d_{F}(w,u)=\infty$. We also conclude that $d_{F}(u,w)=\min\{n\in N\mid u\in F^{n\cup}(w)\}=\min\{n\in N\mid w\in F^{n\cup}(u)\}=d_{F}(w,u)$ iff $u\in F^{\infty\cup}_{\mid 1}(w)$ i.e. $w\in F^{\infty\cup}_{\mid 1}(u)$. Here we are proving the triangle inequality. At first notice that if $u\in F^{\infty\cup}_{\mid 1}(v)$ and $v\in F^{\infty\cup}_{\mid 1}(w)$ then $u\in F^{\infty\cup}_{\mid 1}(w)$ because $F^{\infty\cup}_{\mid 1}$ is undirected and transitive. Therefore the following possibility exist.
\begin{itemize}
\item$u\in F^{\infty\cup}_{\mid 1}(v),\hspace{1pt}v\in F^{\infty\cup}_{\mid 1}(w),\hspace{1pt}u\in F^{\infty\cup}_{\mid 1}(w)$. Assume that $d_{F}(u,v)=i$ and $d_{F}(v,w)=j$. Then $u\in F^{i\cup}(v)$ and $v\in F^{j\cup}(w)$. Then\newline$u\in\bigcup_{a\in F^{j\cup}(w)}F^{i\cup}(a)=F^{i\cup}_{\cup}(F^{j\cup}(w))=F^{i}_{\cup}(F^{j\cup}(w))=F^{(i+j)\cup}(w)$ and so $d_{F}(u,w)\le i+j=d_{F}(u,v)+d_{F}(v,w)$.
\item$u\notin F^{\infty\cup}_{\mid 1}(v),\hspace{1pt}v\notin F^{\infty\cup}_{\mid 1}(w),\hspace{1pt}u\notin F^{\infty\cup}_{\mid 1}(w)$. Then $d_{F}(u,w)=\infty\le\infty=\infty+\infty=d_{F}(u,v)+d_{F}(v,w)$.
\item$u\notin F^{\infty\cup}_{\mid 1}(v),\hspace{1pt}v\notin F^{\infty\cup}_{\mid 1}(w),\hspace{1pt}u\in F^{\infty\cup}_{\mid 1}(w)$. Then there exists $n\in N$ such that $d_{F}(u,w)=n$ and therefore $d_{F}(u,w)<\infty=\infty+\infty=d_{F}(u,v)+d_{F}(v,w)$.
\item$u\notin F^{\infty\cup}_{\mid 1}(v),\hspace{1pt}v\in F^{\infty\cup}_{\mid 1}(w),\hspace{1pt}u\notin F^{\infty\cup}_{\mid 1}(w)$. Then there exists $n\in N$ such that $d_{F}(v,w)=n$ and therefore $d_{F}(u,w)=\infty\le\infty+n=d_{F}(u,v)+d_{F}(v,w)$.
\item$u\in F^{\infty\cup}_{\mid 1}(v),\hspace{1pt}v\notin F^{\infty\cup}_{\mid 1}(w),\hspace{1pt}u\notin F^{\infty\cup}_{\mid 1}(w)$. Then there exists $n\in N$ such that $d_{F}(u,v)=n$ and therefore $d_{F}(u,w)=\infty\le n+\infty=d_{F}(u,v)+d_{F}(v,w)$.
  
\end{itemize}  
Taking it all together we get $d_{F}(u,w)\le d_{F}(u,v)+d_{F}(v,u)$.

Assume that $F$ is connected and $\exists u,w\in V\colon d_{F}(u,w)=\infty$. Then $F^{\infty\cup}_{\mid 1}=\text{const}^{V}$ i.e. $\forall u,w\in V\colon u\in F^{\infty\cup}_{\mid 1}(w)$ and $\exists u,w\in V\colon u\notin F^{\infty\cup}_{\mid 1}(w)$, contradiction.
\end{proof}
This metric space is always complete\cite{bourbakitop} assuming $F$ is connected simple graph multifunction. To see this notice that then $d_{F}(u,w)<\epsilon$ iff $\exists k\in N\cap(0,\epsilon)\colon u\in F^{k\cup}(w)$.
\begin{lemma}
Let $F\colon V\leadsto V$ be a connected simple graph multifunction. Then the metric space $(V,d_{F})$ is complete.
\end{lemma}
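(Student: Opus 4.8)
The plan is to use the fact, recorded immediately before the statement, that the distances produced by $d_F$ all lie in $N$, so that $(V,d_F)$ is an \emph{integer-valued} metric space; in any such space a Cauchy sequence must be eventually constant, and that is all that completeness requires.

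First I would note that connectedness of $F$ guarantees, by the previous lemma, that $d_F(u,w)\in N$ for all $u,w\in V$; in particular no distance equals $\infty$, so the argument is never disturbed by infinite values. Next I would take an arbitrary Cauchy sequence $(x_n)_{n\in N}$ in $(V,d_F)$ and apply the Cauchy condition with the single value $\epsilon=1$: there is an $M\in N$ with $d_F(x_m,x_n)<1$ whenever $m,n\ge M$. Since each such distance is a nonnegative integer and the only natural number below $1$ is $0$, we obtain $d_F(x_m,x_n)=0$, whence $x_m=x_n$ by the identity-of-indiscernibles step already verified in the proof of the preceding lemma (that $d_F(u,w)=0$ forces $u\in F^{0\cup}(w)=\{w\}$, i.e. $u=w$). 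Thus $x_n=x_M$ for every $n\ge M$, so the sequence is eventually constant.

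It then remains only to observe that an eventually constant sequence converges to its terminal value: setting $x=x_M$, for every $\epsilon>0$ we have $d_F(x_n,x)=d_F(x_M,x_M)=0<\epsilon$ for all $n\ge M$, so $x_n\to x$ in $(V,d_F)$. As the Cauchy sequence was arbitrary, every Cauchy sequence converges, i.e. $(V,d_F)$ is complete.

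There is no serious obstacle here; the entire content is the remark preceding the statement, namely that for $0<\epsilon\le 1$ the ball $\{u\in V\mid d_F(u,w)<\epsilon\}$ reduces to $\{w\}$, because $N\cap(0,\epsilon)=\emptyset$ leaves only the reflexive witness $u=w$. The one point deserving care is to invoke the connectedness hypothesis first, so that the discreteness reasoning applies to genuinely finite distances rather than to the symbol $\infty$.
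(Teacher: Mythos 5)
Your proposal is correct and follows essentially the same route as the paper: both arguments observe that since $d_F$ is integer-valued (finite by connectedness), applying the Cauchy condition with $\epsilon\le 1$ forces $d_F(v_n,v_m)=0$, hence $v_n=v_m$, so every Cauchy sequence is eventually constant and therefore convergent. The only cosmetic difference is that the paper phrases the Cauchy condition directly as $\exists k\in N\cap[0,\epsilon)\colon v_n\in F^{k\cup}(v_m)$ and concludes $v_n\in F^{0\cup}(v_m)=\{v_m\}$, whereas you pass explicitly through $d_F(v_n,v_m)=0$ and the identity-of-indiscernibles step.
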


\begin{proof}
Fix $(v_{n})\in V^{N}$. Assume that $(v_{n})$ is Cauchy i.e. $\forall\epsilon>0\colon\exists n_{0}\in N\colon\forall n,m\ge n_{0}\colon\exists k\in N\cap[0,\epsilon)\colon v_{n}\in F^{k\cup}(v_{m})$. Then there exists $n_{0}\in N$ such that $\forall n,m\ge n_{0}\colon v_{n}\in F^{0\cup}(v_{m})=\{v_{m}\}$ i.e. $(v_{n})$ is eventually constant. But every eventually constant sequence from $V$ is convergent in $V$. Therefore $(V,d_{F})$ is complete. 
\end{proof}

We will check wich filters are Cauchy\cite{bourbakitop}. For this purpose we introduce leafs.
\begin{definition}
Let $F\colon V\leadsto V$ be a simple graph multifunction and $v\in V$. Then $Leaf_{v}(F,V)=\{w\in V\mid F(w)=\{v\}\}$ is a set of leafs of vertex $v$.

\end{definition}

\begin{lemma}
Let $F\colon V\leadsto V$ be a strict simple graph multifunction and $v\in V$. Then$\colon$
\begin{enumerate}[(i)]
\item$F_{+}(\{v\})=Leaf_{v}(F,V)$
\item$\{v\}\in Wall_{Leaf_{v}(F,V)}(F,V)$
\item$\{v\}\in Wall_{\aleph_{0}}(F,V)$ iff $\sharp Leaf_{v}(F,V)^{c}<\aleph_{0}$
\end{enumerate}

\end{lemma}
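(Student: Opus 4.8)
The plan is to prove part (i) directly from the definitions and then read off parts (ii) and (iii) as immediate corollaries, since both of the latter are just the defining membership conditions for the two flavours of $Wall$ specialised to the set $\{v\}$.

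For part (i), I would unfold the small preimage: by definition $F_{+}(\{v\})=\{w\in V\mid F(w)\subset\{v\}\}$. The key observation is that the singleton $\{v\}$ has exactly two subsets, $\emptyset$ and $\{v\}$, so the condition $F(w)\subset\{v\}$ forces $F(w)\in\{\emptyset,\{v\}\}$. Here I invoke the standing hypothesis that $F$ is strict, i.e. $F(w)\neq\emptyset$ for every $w\in V$; this eliminates the empty case and yields $F(w)=\{v\}$, which is precisely the condition defining $w\in Leaf_{v}(F,V)$. Hence $F_{+}(\{v\})=Leaf_{v}(F,V)$. This is the only place strictness is used, and it is the crux of the whole lemma: without it $F_{+}(\{v\})$ would additionally contain every $w$ with $F(w)=\emptyset$, breaking the identity.

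For part (ii), I would expand the definition of the wall over a fixed set: $\{v\}\in Wall_{Leaf_{v}(F,V)}(F,V)$ means exactly $Leaf_{v}(F,V)\subset F_{+}(\{v\})$. By part (i) this inclusion in fact holds as an equality, so it holds \emph{a fortiori}.

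For part (iii), I would expand the $\kappa$-wall with $\kappa=\aleph_{0}$ and $U=\{v\}$: the statement $\{v\}\in Wall_{\aleph_{0}}(F,V)$ means $\sharp F_{+}(\{v\})^{c}<\aleph_{0}$. Substituting the equality $F_{+}(\{v\})=Leaf_{v}(F,V)$ from part (i) gives $F_{+}(\{v\})^{c}=Leaf_{v}(F,V)^{c}$, whence the stated equivalence $\{v\}\in Wall_{\aleph_{0}}(F,V)\Leftrightarrow\sharp Leaf_{v}(F,V)^{c}<\aleph_{0}$ follows at once. No genuine obstacle arises; the entire content of the lemma is the strictness argument in part (i), and everything else is bookkeeping against the definitions of $Wall_{A}$ and $Wall_{\kappa}$.
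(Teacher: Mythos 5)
Your proposal is correct and follows essentially the same route as the paper: part (i) is the identity $F_{+}(\{v\})=\{w\in V\mid F(w)\subset\{v\}\}=\{w\in V\mid F(w)=\{v\}\}$ forced by strictness, and parts (ii) and (iii) are the definitions of $Wall_{A}$ and $Wall_{\kappa}$ specialised to $U=\{v\}$ with the equality from (i) substituted in. No discrepancies to report.
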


\begin{proof}
\begin{enumerate}[(i)]
\item$F_{+}(\{v\})=\{w\in V\mid F(w)\subset\{v\}\}=\{w\in V\mid F(w)=\{v\}\}=Leaf_{v}(F,V)$ because $\forall w\in V\colon F(w)\neq\emptyset$.
\item$\{v\}\in Wall_{Leaf_{v}(F,V)}(F,V)$ iff $Leaf_{v}(F,V)\subset F_{+}(\{v\})$ but\newline$F_{+}(\{v\})=Leaf_{v}(F,V)$.
\item$\{v\}\in Wall_{\aleph_{0}}(F,V)$ iff $\sharp Leaf_{v}(F,V)^{c}=\sharp F_{+}(\{v\})^{c}<\aleph_{0}$ i.e. the set of vertices that are not leafs for $v$ is finite.
\end{enumerate}
\end{proof}

\begin{lemma}
Let $F\colon V\leadsto V$ be a connected simple graph multifunction and $v\in V$. Then $Wall_{Leaf_{v}(F,V)}(F,V)$ is Cauchy in $(V,d_{F})$ and if $\sharp Leaf_{v}(F,V)^{c}<\aleph_{0}=\sharp(V)$ then $Wall_{\aleph_{0}}(F,V)$ is Cauchy in $(V,d_{F})$.

\end{lemma}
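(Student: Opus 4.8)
The plan is to reduce the claim to the single fact, proved in the previous lemma, that the singleton $\{v\}$ lies in each of these two filters, and then to note that a singleton is a set of diameter zero. Recall that a filter $\Phi$ on the metric space $(V,d_{F})$ is Cauchy iff for every $\epsilon>0$ there is some $U\in\Phi$ with $\sup\{d_{F}(u,w)\mid u,w\in U\}<\epsilon$; since $F$ is a connected simple graph multifunction, $d_{F}$ is an honest $N$-valued metric, so these diameters are well defined. Thus it suffices to exhibit, for each $\epsilon$, a member of the filter of arbitrarily small diameter.

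First I would compute the diameter of $\{v\}$. As in the proof that $d_{F}$ is a metric, $F^{\infty\cup}_{\mid 1}$ is everywhereloop, so $v\in F^{0\cup}(v)=\{v\}$ yields $d_{F}(v,v)=0$; hence $\sup\{d_{F}(u,w)\mid u,w\in\{v\}\}=0$, which is smaller than every $\epsilon>0$. Consequently any filter containing $\{v\}$ satisfies the Cauchy condition by taking $U=\{v\}$ for all $\epsilon$ simultaneously. Now I invoke the previous lemma: its part (ii) gives $\{v\}\in Wall_{Leaf_{v}(F,V)}(F,V)$, so this filter is Cauchy; and under the hypothesis $\sharp Leaf_{v}(F,V)^{c}<\aleph_{0}$, part (iii) gives $\{v\}\in Wall_{\aleph_{0}}(F,V)$, so that filter is Cauchy too. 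The additional assumption $\aleph_{0}=\sharp(V)$ is used only to guarantee, through the earlier proper-filter lemma, that $Wall_{\aleph_{0}}(F,V)$ is a proper filter, making the statement non-degenerate.

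I do not expect any genuine obstacle here: the real work, namely locating a small set inside each filter, is already done in the preceding lemma, and connectedness merely ensures $d_{F}$ is finite-valued. The only point to phrase carefully is the metric-space definition of a Cauchy filter together with the elementary remark that a singleton has diameter zero; once these are in place both assertions follow immediately.
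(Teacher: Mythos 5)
Your proposal is correct and follows essentially the same route as the paper: both arguments observe that the singleton $\{v\}$ has $d_{F}$-diameter zero and lies in each filter by the preceding lemma (parts (ii) and (iii)), so the Cauchy condition is witnessed by $M=\{v\}$ for every $\epsilon>0$. Your side remark that $\sharp V=\aleph_{0}$ serves only to keep $Wall_{\aleph_{0}}(F,V)$ proper is a reasonable reading the paper leaves implicit.
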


\begin{proof}
Fix $\epsilon>0,v\in V$. Then $diam_{F}(\{v\})=\sup_{u,w\in\{v\}}d_{F}(u,w)=0<\epsilon$ and $\{v\}\in Wall_{Leaf_{v}(F,V)}(F,V)$. Then $\forall\epsilon>0\colon\exists M\in Wall_{Leaf_{v}(F,V)}(F,V),$\newline$M=\{v\}\colon diam_{F}(M)<\epsilon$. Assume that $\sharp V=\aleph_{0}$ and $\sharp Leaf_{v}(F,V)^{c}<\aleph_{0}$. Then $\{v\}\in Wall_{\aleph_{0}}(F,V)$ and therefore $\forall\epsilon>0\colon\exists M\in Wall_{\aleph_{0}}(F,V),M=\{v\}\colon diam_{F}(M)<\epsilon$.
\end{proof}

\subsection{Multifunction of prime numbers}
We will apply the above facts about filters to divisibility of natural numbers. We will use multifunction $\text{Prime}\colon N\to P(N);n\mapsto\text{Prime}(n)=\{p\in N_{prime}\mid\exists k\in N\colon n=k\cdotp p\}$, where $N_{prime}$ is set of prime numbers. This multifunction is strict because of fundamental theorem of arithmetic\cite{vinogradov}. We introduce a bit differently this theorem. Let $F_{\aleph_{0}}=\{U\in P(N)\mid\sharp(N-U)<\aleph_{0}\}\subset P(N)$ be the cofinite filter. We denote $Ev_{0}^{prime}(N)=\{(a_{n})\in N^{N}\mid N_{prime}^{c}\subset(a_{n})^{-1}(\{0\})\in F_{\aleph_{0}}\}\subset N^{N}$ the set of $0-$eventually constant sequences that is $0$ out of $N_{prime}$. The fundamental theorem of arithmetic says that there is the bijection $\lambda\colon N\to Ev_{0}^{prime}(N);n\mapsto\lambda^{n}=(\lambda_{m}^{n})$ iff $n=\Pi_{p\in N_{prime}}p^{\lambda^{n}_{p}}$. It is easily seen that $\text{Prime}(n)=(\lambda^{n})^{-1}(\{0\})^{c}$ for each $n\in N$ and $(\text{Prime}\circ\lambda^{-1})((a_{n}))=(a_{n})^{-1}(\{0\})^{c}$ for each $(a_{n})\in Ev_{0}^{prime}(N)$.

We list some simple properties of the $\text{Prime}$ multifunction.
\begin{lemma}
Let $U\in P(N)$. Then$\colon$
\begin{enumerate}[(i)]
\item$\text{Prime}_{+}(N_{prime})=N$
\item$\reflectbox{D}(\text{Prime})=N_{prime}$
\item$\forall(a_{n})\in Ev_{0}^{prime}(N)\colon\text{Prime}(\Pi_{p\in N_{prime}\cap U}p^{a_{p}})\subset U$
\item$\text{Prime}_{+}(U)=\{n\in N\mid\exists(a_{n})\in Ev_{0}^{prime}(N)\colon n=\Pi_{p\in N_{prime}\cap U}p^{a_{p}}\}$
\item$\text{Prime}_{+}(U)^{c}=\{n\in N\mid\exists p\in N_{prime}-U\colon p\mid n\}$
\item$\text{Prime}_{-}(U)=\{n\in N\mid\exists p\in N_{prime}\cap U\colon p\mid n\}$
\item$\text{Prime}_{-}(N_{even})=N_{even}$
\item$\text{Prime}_{-}(N_{odd})=\{2\cdotp m\mid m\in N\}^{c}$ 
\end{enumerate}
\end{lemma}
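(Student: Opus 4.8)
The plan is to reduce every item to two elementary observations that I would state first: for each $n\in N$ the set $\text{Prime}(n)$ contains only primes, and $p\in\text{Prime}(n)$ holds precisely when $p\in N_{prime}$ and $p\mid n$. Given these, (i) is immediate, since $\text{Prime}(n)\subset N_{prime}$ for every $n$ means the defining condition $\text{Prime}(n)\subset N_{prime}$ of $\text{Prime}_{+}(N_{prime})$ always holds, whence $\text{Prime}_{+}(N_{prime})=N$. For (ii), $\reflectbox{D}(\text{Prime})=\bigcup_{n\in N}\text{Prime}(n)\subset N_{prime}$ by the first observation, while every prime $p$ satisfies $p\in\text{Prime}(p)$, giving $N_{prime}\subset\reflectbox{D}(\text{Prime})$ and hence equality.

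For (iii) and (iv) I would use the form of the fundamental theorem of arithmetic recorded before the lemma, namely the bijection $\lambda\colon N\to Ev_{0}^{prime}(N)$ with $\text{Prime}(n)=(\lambda^{n})^{-1}(\{0\})^{c}$. For (iii), setting $n=\Pi_{p\in N_{prime}\cap U}p^{a_{p}}$, any prime divisor of $n$ must occur in this product and therefore lies in $N_{prime}\cap U\subset U$, so $\text{Prime}(n)\subset U$. For (iv) I would prove double inclusion: $\supset$ is exactly (iii), and for $\subset$ I take $n$ with $\text{Prime}(n)\subset U$; since $\text{Prime}(n)=\{p\in N_{prime}\mid\lambda^{n}_{p}\neq 0\}$, the exponent $\lambda^{n}_{p}$ vanishes for every prime $p\notin U$, so the factorization $n=\Pi_{p\in N_{prime}}p^{\lambda^{n}_{p}}$ collapses to $n=\Pi_{p\in N_{prime}\cap U}p^{\lambda^{n}_{p}}$, exhibiting $(a_{n})=\lambda^{n}\in Ev_{0}^{prime}(N)$ as the witness required on the right-hand side.

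The last four items are bookkeeping with $F_{+}$ and $F_{-}$. For (v) I would write $\text{Prime}_{+}(U)^{c}=\{n\in N\mid\text{Prime}(n)\not\subset U\}$ and translate $\text{Prime}(n)\not\subset U$, via the preliminary observation, into $\exists p\in N_{prime}-U\colon p\mid n$. For (vi) I would unfold $\text{Prime}_{-}(U)=\{n\in N\mid\text{Prime}(n)\cap U\neq\emptyset\}$ directly into $\{n\in N\mid\exists p\in N_{prime}\cap U\colon p\mid n\}$. Items (vii) and (viii) are then specializations of (vi): as $N_{prime}\cap N_{even}=\{2\}$, it yields $\text{Prime}_{-}(N_{even})=\{n\mid 2\mid n\}=N_{even}$; and as $N_{prime}\cap N_{odd}$ is the set of odd primes, it yields that $\text{Prime}_{-}(N_{odd})$ is exactly the set of $n$ possessing an odd prime divisor, which I would then match against the claimed complement $\{2\cdot m\mid m\in N\}^{c}$.

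The only genuine input is the fundamental theorem of arithmetic invoked in (iv); the rest is unfolding the definitions of image and preimage together with the two preliminary observations. I expect the main obstacle to be the final identification in (viii): after applying (vi) one must determine precisely which $n$ fail to have an odd prime divisor in order to confirm equality with $\{2\cdot m\mid m\in N\}^{c}$, and this is the one place where the convention for $N$ (whether $0$ and $1$ are included) and the exact description of the complement must be pinned down carefully.
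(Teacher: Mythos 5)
Your treatment of items (i)--(vii) is correct and essentially the route the paper takes: the same two preliminary observations (every element of $\text{Prime}(n)$ is a prime, and $p\in\text{Prime}(n)$ iff $p$ is a prime dividing $n$), the same use of the bijection $\lambda$ and unique factorization for (iii) and (iv) (the paper phrases both as proofs by contradiction via the truncated sequence $\bar{a}^{U}$, you argue directly by collapsing the factorization, which is cleaner but equivalent), and the same unfolding of $F_{+}$ and $F_{-}$ for (v)--(vii).

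The one place you stop short, item (viii), is exactly where you needed to push through, because the ``match'' you defer cannot be made: the set of $n$ with no odd prime divisor is the set of powers of two $\{2^{m}\mid m\in N\}$, not the set of even numbers $\{2\cdot m\mid m\in N\}$, so the identity as printed is false (take $n=6$: it lies in $\text{Prime}_{-}(N_{odd})$ since $3\mid 6$, yet $6\notin\{2\cdot m\mid m\in N\}^{c}$). Your own reduction via (vi) yields $\text{Prime}_{-}(N_{odd})=\{2^{m}\mid m\in N\}^{c}$, and this is also what the paper's proof actually derives: it computes $\text{Prime}_{+}(N_{even})=\{2^{m}\mid m\in N\}$ from (iv) with $N_{prime}\cap N_{even}=\{2\}$ and then applies the duality $F_{-}(B^{c})=F_{+}(B)^{c}$ of Lemma 2.3 to get $\text{Prime}_{-}(N_{odd})=\text{Prime}_{-}(N_{even}^{c})=\text{Prime}_{+}(N_{even})^{c}=\{2^{m}\mid m\in N\}^{c}$. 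So the displayed statement contains a typo ($2\cdot m$ where $2^{m}$ is meant); your hesitation about the complement was justified, but the correct resolution is to repair the right-hand side, not to hope that the convention on $N$ rescues the printed one.
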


\begin{proof}
Fix $(a_{n})\in Ev_{0}^{prime}(N),n\in N$.
\begin{enumerate}[(i)]
\item$\forall n\in N\colon\text{Prime}(n)\subset N_{prime}$.
\item For every $p\in N_{prime}$ there exists $n\in N,n=2\cdotp p$ such that $p\mid n$; or equivalently, $N_{prime}\subset\bigcup_{n\in N}\text{Prime}(n)=\text{Prime}_{\cup}(N)=\reflectbox{D}(\text{Prime})$.
\item Denote $(\bar{a}_{n}^{U})\colon N\to N;n\mapsto\bar{a}_{n}^{U}=\begin{cases}a_{n}$ iff $n\in U\\0$ iff $n\notin U\end{cases}$. Assume that there exists $q\in N_{prime}$ such that $q\notin U$ and $q\in\text{Prime}(\Pi_{p\in N_{prime}\cap U}p^{a_{p}})$; or equivalently, $q\mid\Pi_{p\in N_{prime}\cap U}p^{a_{p}}=\Pi_{p\in N_{prime}}p^{\bar{a}_{p}^{U}}$ and $q\notin U$. Then $q\notin U$ and there exists $k\in N$ such that $\Pi_{p\in N_{prime}}p^{\bar{a}_{p}^{U}}=q\cdotp k=\Pi_{p\in N_{prime}}p^{\lambda^{q\cdotp k}_{p}}$. But $\lambda^{q\cdotp k}_{q}\neq 0$ and $q\notin U$. Then $(\bar{a}_{n}^{U})=(\lambda^{q\cdotp k}_{n})$ but $\bar{a}_{q}^{U}=0\neq\lambda^{q\cdotp k}_{q}$, contradiction.
\item We prove the inclusion\newline$\{n\in N\mid\exists(a_{n})\in Ev_{0}^{prime}(N)\colon n=\Pi_{p\in N_{prime}\cap U}p^{a_{p}}\}\subset\text{Prime}_{+}(U)$. Assume that $\exists(a_{n})\in Ev_{0}^{prime}(N)\colon n=\Pi_{p\in N_{prime}\cap U}p^{a_{p}}$. Then\newline$\text{Prime}(n)\subset U$ i.e. $n\in\text{Prime}_{+}(U)$. In the opposite direction we prove the inclusion $\text{Prime}_{+}(U)\subset\{n\in N\mid\exists(a_{n})\in Ev_{0}^{prime}(N)\colon n=\Pi_{p\in N_{prime}\cap U}p^{a_{p}}\}$. Then there exists only one $(\lambda^{n}_{m})\in Ev_{0}^{prime}(N)$ such that $n=\Pi_{p\in N_{prime}}p^{\lambda^{n}_{p}}$. Assume that $n\in\text{Prime}_{+}(U)$ and $\forall(a_{n})\in Ev_{0}^{prime}(N)\colon n\neq\Pi_{p\in N_{prime}\cap U}p^{a_{p}}$; or equivalently, $\forall q\in N_{prime}\colon q\mid n\Rightarrow q\in U$ and $\forall(a_{n})\in Ev_{0}^{prime}(N)\colon n\neq\Pi_{p\in N_{prime}\cap U}p^{a_{p}}$. Denote $(\bar{a}_{n}^{U})\colon N\to N;n\mapsto\bar{a}_{n}^{U}=\begin{cases}a_{n}$ iff $n\in U\\0$ iff $n\notin U\end{cases}$. We may say equivalently that $\forall q\in N_{prime}\colon q\mid n\Rightarrow q\in U$ and\newline$\forall(a_{n})\in Ev_{0}^{prime}(N)\colon\Pi_{p\in N_{prime}}p^{\lambda^{n}_{p}}\neq\Pi_{p\in N_{prime}}p^{\bar{a}_{p}^{U}}$. But $(\lambda^{n}_{m})\in Ev_{0}^{prime}(N)$ and so $\Pi_{p\in N_{prime}}p^{\lambda^{n}_{p}}\neq\Pi_{p\in N_{prime}}p^{\bar{\lambda}_{p}^{n,U}}$. Then there exists $q\in N_{prime}$ such that $\lambda^{n}_{q}\neq 0=\bar{\lambda}^{n,U}_{q}$ i.e. $q\notin U$ and $q\mid n$. But $\forall q\in N_{prime}\colon q\mid n\Rightarrow q\in U$, contradiction.
\item There exists $p\in N_{prime}$ such that $p\mid n$ and $p\notin U$ iff $\text{Prime}(n)\cap U^{c}\neq\emptyset$; or equivalently, not $\text{Prime}(n)\subset U$ i.e. $n\in\text{Prime}_{+}(U)^{c}$.
\item There exists $p\in N_{prime}$ such that $p\mid n$ and $p\in U$ iff $\text{Prime}(n)\cap U\neq\emptyset$ i.e. $n\in\text{Prime}_{-}(U)$.
\item$\text{Prime}_{-}(N_{even})=\{n\in N\mid\exists p\in N_{prime}\cap N_{even}\colon p\mid n\}=\{n\in N\mid\exists k\in N\colon n=2\cdotp k\}=N_{even}$ because $N_{prime}\cap N_{even}=\{2\}$.
\item$\text{Prime}_{+}(N_{even})=\{n\in N\mid\exists(a_{n})\in Ev_{0}^{prime}(N)\colon$\newline$n=\Pi_{p\in N_{prime}\cap N_{even}=\{2\}}p^{a_{p}}\}=\{n\in N\mid\exists m\in N\colon n=2^{m}\}=\{2^{m}\mid m\in N\}$ and $\text{Prime}_{-}(N_{odd})=\text{Prime}_{-}(N_{even}^{c})=\text{Prime}_{+}(N_{even})^{c}=\{2^{m}\mid m\in N\}^{c}$. 

\end{enumerate}
\end{proof}

We look for the set of natural numbers such that the amount of natural numbers that have a prime divisor out of one set is finite. 
\begin{lemma}$Wall_{\aleph_{0}}(\text{Prime},N)=\{U\subset N\mid$ the set of natural numbers that have a prime divisor out of $U$ is finite$\}$ is a proper filter but not ultrafilter.
\end{lemma}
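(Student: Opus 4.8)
The plan is to settle the three assertions in turn, leaning on the general $Wall$-theory already in place. First I would note that the stated set-equality is merely the definition unfolded: part (v) of the lemma on $\text{Prime}$ identifies $\text{Prime}_{+}(U)^{c}$ with the set of natural numbers possessing a prime divisor outside $U$, and by definition $U\in Wall_{\aleph_{0}}(\text{Prime},N)$ precisely when $\sharp\text{Prime}_{+}(U)^{c}<\aleph_{0}$, i.e. when that set is finite. For the assertion that it is a proper filter I would simply invoke the earlier lemma guaranteeing that $Wall_{\kappa}(F,V)$ is a proper filter whenever $F$ is strict and $\sharp V\ge\kappa\ge\aleph_{0}$. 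Taking $F=\text{Prime}$ (strict, by the fundamental theorem of arithmetic), $V=N$ with $\sharp N=\aleph_{0}$, and $\kappa=\aleph_{0}$, the chain $\sharp V\ge\kappa\ge\aleph_{0}$ holds with equality, so the conclusion applies; here filterhood comes from $Wall_{\aleph_{0}}=Neigh_{\aleph_{0}}^{d}$ being the dual of the ideal $Neigh_{\aleph_{0}}$, and properness from strictness, which makes $\text{Prime}_{+}(\emptyset)^{c}=N$ infinite.

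The substance of the lemma is that the filter is not an ultrafilter, and the cleanest route is to first observe that it collapses to a principal filter. If some prime $p$ lies outside $U$, then every one of the infinitely many multiples $k\cdot p$ has $p$ as a prime divisor outside $U$, so $\text{Prime}_{+}(U)^{c}\supset\{k\cdot p\mid k\in N\}$ is infinite; conversely $N_{prime}\subset U$ forces $\text{Prime}_{+}(U)^{c}=\emptyset$. Hence $Wall_{\aleph_{0}}(\text{Prime},N)=\{U\subset N\mid N_{prime}\subset U\}$. Since $N_{prime}$ is infinite it can be partitioned nontrivially, and to contradict ultrafilterhood I would take $U=N_{even}$: as $3\in N_{prime}-U$ and $2\in N_{prime}-U^{c}$, neither $U$ nor $U^{c}=N_{odd}$ contains all the primes, so neither belongs to the filter. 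If one prefers a direct verification, parts (vii),(viii) together with the complementation identity $\text{Prime}_{+}(B^{c})=\text{Prime}_{-}(B)^{c}$ give $\text{Prime}_{+}(N_{even})^{c}=\{2^{m}\mid m\in N\}^{c}$ and $\text{Prime}_{+}(N_{odd})^{c}=N_{even}$, both infinite.

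The only real obstacle is conceptual rather than computational: one must notice that a single prime missing from $U$ already yields infinitely many numbers with a prime divisor outside $U$. This single observation does double duty, pinning down the filter as the principal filter generated by $N_{prime}$ and making the failure of the ultrafilter dichotomy transparent, since any bipartition of the primes splits it. I would also flag that the argument uses the strictness of $\text{Prime}$ in two places, so that dependency (asserted via the fundamental theorem of arithmetic) should be cited explicitly.
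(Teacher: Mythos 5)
Your argument is correct, and its skeleton coincides with the paper's: both unfold the definition via the identity $\text{Prime}_{+}(U)^{c}=\{n\in N\mid\exists p\in N_{prime}-U\colon p\mid n\}$ from the preceding lemma, both obtain properness from the earlier result that $Wall_{\kappa}(F,V)$ is a proper filter for strict $F$ with $\sharp V\ge\kappa\ge\aleph_{0}$, and both use the bipartition $N=N_{even}\dot{\cup}N_{odd}$ to refute the ultrafilter property. The difference lies in how that last step is executed. The paper computes cardinalities directly, reusing its formulas $\text{Prime}_{-}(N_{even})=N_{even}$ and $\text{Prime}_{-}(N_{odd})=\{2^{m}\mid m\in N\}^{c}$ together with $F_{+}(B^{c})=F_{-}(B)^{c}$ to conclude $\sharp\text{Prime}_{+}(N_{even})^{c}=\sharp\text{Prime}_{+}(N_{odd})^{c}=\aleph_{0}$. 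You instead establish the sharper structural fact that $Wall_{\aleph_{0}}(\text{Prime},N)=\{U\in P(N)\mid N_{prime}\subset U\}=Wall_{N_{prime}}(\{\cdotp\},N)$, i.e. the filter is principal and generated by $N_{prime}$: one prime $p\notin U$ already forces the infinite set $\{k\cdotp p\mid k\in N\}$ into $\text{Prime}_{+}(U)^{c}$, while $N_{prime}\subset U$ forces $\text{Prime}_{+}(U)^{c}=\emptyset$. After that, $3\notin N_{even}$ and $2\notin N_{odd}$ finish the job without any computation of $\text{Prime}_{-}(N_{odd})$.

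What your route buys is a complete description of the filter, which the paper never states, and a proof of properness ($\emptyset\not\supset N_{prime}$) that does not pass through strictness of $\text{Prime}$. The latter is worth flagging: since $\text{Prime}(1)=\emptyset$, the paper's assertion that $\text{Prime}$ is strict on all of $N$ is delicate (indeed the paper later restricts to $\bar{N}=N-\{0,1\}$), whereas your principal-filter characterization is immune to this. What the paper's route buys is that it exercises the explicit computations (vii) and (viii) it has just proved. Both arguments are valid; yours is the more economical and the more informative.
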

\begin{proof}
Fix $U\subset N$. Then $U\in Wall_{\aleph_{0}}(\text{Prime},N)$ iff $\sharp\text{Prime}_{+}(U)^{c}<\aleph_{0}$; or equivalently, $\text{Prime}_{+}(U)^{c}$ is finite. But $\text{Prime}_{+}(U)^{c}$ is the set of natural numbers that have prime divisor out of $U$. The filter $Wall_{\aleph_{0}}(\text{Prime},N)$ is proper because $\text{Prime}$ is strict. Notice that there exists $U\subset N,U=N_{even}$ such that $\sharp\text{Prime}_{+}(U^{c})^{c}=\sharp\text{Prime}_{-}(U)=\sharp U=\aleph_{0}$ and $\sharp\text{Prime}_{+}(U)^{c}=\sharp\text{Prime}_{-}(U^{c})=\sharp\{2^{m}\mid m\in N\}^{c}=\aleph_{0}$ i.e. there exists $U\in P(N)$ such that $U\notin Wall_{\aleph_{0}}(\text{Prime},N)$ and $U^{c}\notin Wall_{\aleph_{0}}(\text{Prime},N)$. Therefore\newline$Wall_{\aleph_{0}}(\text{Prime},N)$ is not an ultrafilter. 
\end{proof}

We denote $Prime=(\text{Prime}\cup\text{Prime}^{-1})-\{\cdotp\}$ and $\bar{N}=N-\{0,1\}$. 
\begin{lemma}
For every $p\in N_{prime}$ the equality
$Leaf_{p}(Prime,\bar{N})=\{p^{m}\mid m\in N\}$ is fulfilled and $Wall_{\{p^{m}\mid m\in N\}}(Prime,\bar{N})$ is Cauchy in $(\bar{N},d_{Prime})$. 
\end{lemma}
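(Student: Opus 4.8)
The plan is to deduce the whole statement from the preceding lemma asserting that, for a connected simple graph multifunction $F\colon V\leadsto V$ and any $v\in V$, the family $Wall_{Leaf_{v}(F,V)}(F,V)$ is Cauchy in $(V,d_{F})$. Applying this with $F=Prime$, $V=\bar N$ and $v=p$ yields that $Wall_{Leaf_{p}(Prime,\bar N)}(Prime,\bar N)$ is Cauchy, so once the first equality $Leaf_{p}(Prime,\bar N)=\{p^{m}\mid m\in N\}$ is in hand, the Cauchy assertion follows immediately by substituting this set for $Leaf_{p}(Prime,\bar N)$. Thus the work splits into two tasks: verifying that $Prime$ is a \emph{connected simple graph multifunction} on $\bar N$, and computing the leaf set explicitly.

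First I would settle the structural properties. Since $\text{Prime}\cup\text{Prime}^{-1}$ has the form $H\cup H^{-1}$ it is undirected, and subtracting the singleton multifunction keeps it undirected while making it loopless (recall that $F-\{\cdotp\}$ is always loopless); hence $Prime$ is a simple graph multifunction, and $(\bar N,d_{Prime})$ is a genuine metric space by the metric lemma. For connectedness I would exhibit walks directly from the definition: every $n\in\bar N$ is adjacent to each of its prime divisors, and any two primes $p,q$ are joined by the walk $p,\,p\cdotp q,\,q$, since both $p$ and $q$ are prime divisors of $p\cdotp q$. Chaining these observations gives $\forall u,w\in\bar N\colon Walk_{u\to w}(Prime,\bar N\mid 1)\neq\emptyset$, so $Prime$ is connected (and in particular strict).

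Next I would compute the neighborhood $Prime(w)=(\text{Prime}(w)\cup\text{Prime}^{-1}(w))-\{w\}$, i.e. $u\in Prime(w)$ iff $u\neq w$ and either $u$ is a prime divisor of $w$ or $w$ is a prime divisor of $u$. A case analysis finishes the identification: if $w$ is composite then no multiple of $w$ admits $w$ as a \emph{prime} divisor, so $Prime(w)$ is exactly the set of prime divisors of $w$, which reduces to the singleton $\{p\}$ precisely when $w$ is a pure power of $p$; if $w$ is prime then $Prime(w)$ contains all proper multiples $2w,3w,\dots$ and is infinite, hence never a singleton. Therefore $Prime(w)=\{p\}$ exactly for the powers $w=p^{m}$, which gives $Leaf_{p}(Prime,\bar N)=\{p^{m}\mid m\in N\}$.

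The closure and symmetry checks are routine; the one place demanding care is the case split in the neighborhood computation, because it is the asymmetry between `downward' edges (always the finite set of prime divisors) and `upward' edges (present only for prime $w$, and then infinitely many) that pins down which vertices are leaves. I would also be attentive to the admissible range of the exponent $m$: a prime $p=p^{1}$ has infinite degree and is itself \emph{not} a leaf, so the exponents contributing to $Leaf_{p}(Prime,\bar N)$ are exactly those for which $p^{m}$ is composite, consistent with the paper's convention for $N$.
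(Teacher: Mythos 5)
Your proposal is correct and follows the same route as the paper: compute $Leaf_{p}(Prime,\bar{N})$ explicitly and then invoke the preceding lemma asserting that $Wall_{Leaf_{v}(F,V)}(F,V)$ is Cauchy in $(V,d_{F})$. You are in fact more careful than the paper's own one-line computation, which identifies $Prime(n)$ with the set of prime divisors of $n$ and thereby silently ignores both the $\text{Prime}^{-1}$ contribution (so that $p=p^{1}$, having infinite degree, is not actually a leaf) and the need to verify that $Prime$ is a connected simple graph multifunction before the metric and Cauchy lemmas apply.
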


\begin{proof}
Notice that $Leaf_{p}(Prime,\bar{N})=\{n\in N\mid Prime(n)=\{p\}\}=\{n\in N\mid\exists k\in N\colon n=k\cdotp p\land\forall(q,l)\in(N_{prime}-\{p\})\times N\colon n\neq l\cdotp q\}=\{n\in N\mid\exists m\in N\colon n=p^{m}\}=\{p^{m}\mid m\in N\}$. Therefore $Wall_{\{p^{m}\mid m\in N\}}(Prime,\bar{N})$ is Cauchy in $(\bar{N},d_{Prime})$. 
\end{proof}

\section*{Conclusions}
We have introduced the multifunction iteration theory in detail. We translated some ideas of graph theory into multifunction language. We showed that there are graph theoretic filters and ideals. We believe that thanks to this, other readers will share the belief that everything from graph theory can be translated into multifunctions and that, in most cases, this translation will enrich the analysis process. In the future we want to answer the following questions$\colon$
\begin{enumerate}
\item how to describe the multipartite graphs using iterations
\item how to detect the even cycle in graph using iterations
\item what facts from discrete dynamical systems on multifunctions are useful for graph theory
\end{enumerate}


\end{document}